\newtheorem{theorem}{Theorem}[section]
\newtheorem{lemma}[theorem]{Lemma}
\newtheorem{proposition}[theorem]{Proposition}
\newtheorem{corollary}[theorem]{Corollary}
\theoremstyle{definition}}
\theoremstyle{definition}}
\theoremstyle{definition}\newtheorem{example}[theorem]{Example}}
\theoremstyle{definition}\newtheorem{definition}[theorem]{Definition}}
\theoremstyle{definition}}
\theoremstyle{definition}\newtheorem{remark}[theorem]{Remark}}
\newcommand{\CC}{\mathbb{C}}
\newcommand{\NN}{\mathbb{N}}
\newcommand{\DD}{\mathbb{D}}
\newcommand{\RR}{\mathbb{R}}
\newcommand{\ZZ}{\mathbb{Z}}
\newcommand{\TT}{\mathbb{T}}
\newcommand{\veps}{\varepsilon}
\newcommand{\minv}{m_{\mathrm{inv}}}
\newcommand{\lfm}{\mathrm{LFM}(\mathbb D)}
\newcommand{\hyp}{\mathrm{Hyp}(\mathbb D)}
\newcommand{\para}{\mathrm{Par}(\mathbb D)}
\newcommand{\aut}{\mathrm{Aut}(\mathbb D)}
\newcommand{\ldens}{\underline{\textrm{dens}}}
\newcommand{\udens}{\overline{\textrm{dens}}}
\newcommand{\Ald}{\mathcal A_{ld}}
\newcommand{\Aud}{\mathcal A_{ud}}
\newcommand{\Aubd}{\mathcal A_{u\mathcal Bd}}
\begin{document}
\title[Disjoint frequent hypercyclicity]
{Disjoint frequent hypercyclicity of composition operators}
\date{\today}

\author[F. Bayart]{Frédéric Bayart}

\address{Laboratoire de Math\'ematiques Blaise Pascal UMR 6620 CNRS, Universit\'e Clermont Auvergne, Campus universitaire des C\'ezeaux, 3 place Vasarely, 63178 Aubi\`ere Cedex, France.}
\email{frederic.bayart@uca.fr}


\subjclass[2010]{}

\keywords{}

\begin{abstract}
We give a sufficient condition for two operators to be disjointly frequently hypercyclic. We apply this criterion to composition operators acting on $H(\mathbb D)$ or on the Hardy space $H^2(\mathbb D)$. We simplify a result on disjoint frequent hypercyclicity of pseudo shifts of a recent paper of Martin et al. and we exhibit two disjointly frequently hypercyclic weighted shifts.
\end{abstract}
\maketitle

\section{Introduction}

Among the many notions in linear dynamics, frequent hypercyclicity (introduced in \cite{BAYGRITAMS}) and disjoint hypercyclicity
(introduced independently in \cite{Bernal-disjoint} and in \cite{BePe-disjoint}) play a central role. The first one quantifies how often
the orbit of a vector can visit each nonempty open set whereas the second one studies in some sense the independence of the orbits
of two hypercyclic operators.
Very recently, in  \cite{MMP22} and \cite{MaPu21}, these two classes were merged into the following natural definition.
Throughout this paper, the letter $X$ will mean a separable and infinite-dimensional $F$-space.
\begin{definition}
 Let $T_1,\dots,T_N\in\mathcal L(X)$. These operators are called {\bf disjointly frequently hypercyclic} if there is a vector $x\in X$
 such that, for any nonempty open set $U\subset X^N$, $\{n\in\NN:\ (T_1^n x,\dots,T_N^n x)\in U\}$ has positive lower density.
\end{definition}
We can similarly define disjoint upper frequent hypercyclicity and disjoint reiterative hypercyclicity by replacing the lower
density by the upper density or by the upper Banach density.

In \cite{MMP22} a criterion for the disjoint frequent (resp. upper frequent, reiterative) hypercyclicity of $N$ operators
is given. This criterion requires to construct a priori subsets of $\NN$ with positive lower (resp. upper, resp. upper Banach) 
density and which are well separated. Then the authors of \cite{MMP22}
deduce criteria for the disjoint frequent (resp. upper frequent, reiterative) hypercyclicity of pseudo-shifts $T_{f_1,w_1},\dots,T_{f_N,w_N}$.
Recall that, for $w=(w_j)$ a weight sequence and $f:\mathbb N\to\mathbb N$ increasing with $f(1)>1$,
the unilateral pseudo-shift $T_{f,w}$ is defined on $c_0(\NN)$ or $\ell^p(\NN)$ by
$$T_{f,w}\left(\sum_{j=1}^{\infty}x_je_j\right)=\sum_{j=1}^{\infty}w_{f(j)}x_{f(j)}e_j.$$
In particular, the class of pseudo-shifts encompasses that of weighted shifts for which $f(j)=j+1$. These criteria are rather technical
(in particular, one needs separation properties on $f_1,\dots,f_N$). However they allow the authors of \cite{MMP22}
to exhibit couples of weighted shifts $(B_{w_1},B_{w_2})$ which are disjointly upper frequently hypercyclic, but not disjointly frequently
hypercyclic, or which are disjointly reiteratively frequently hypercyclic and not disjointly upper frequently hypercyclic. 

\smallskip

Our first aim, in this paper, is to shed some new light on the results of \cite{MMP22}. We begin by deducing from Theorem 2.1 in \cite{MMP22}
another criterion for disjoint frequent hypercyclicity without requiring to exhibit disjoint sets with positive lower density.
Instead of this, we will need an increasing sequence $(n_k)$ with $n_k\leq Ck$ for some $C>0$. This criterion will be applied later on in this paper, with $(n_k)$ 
which will be sometimes the whole sequence of integers and sometimes a subsequence of $\mathbb N$.

We then study disjoint hypercyclicity of weighted shifts and pseudo-shifts of $\ell^p(\mathbb N)$. We exhibit an example of two weighted shifts
on $\ell^p(\mathbb N)$ which are disjointly frequently hypercyclic (such an example is not provided in \cite{MMP22}). 
We also show how to delete an extra technical assumption in the criteria of \cite{MMP22} for disjoint frequent hypercyclicity
of pseudo-shifts. For this we develop a technique which has further applications. Indeed, as pointed above, to construct (disjointly) frequently hypercyclic
vectors, we often need to exhibit subsets of $\NN$ with positive lower density and which are well-separated. We get here a very general 
result in that direction that we will use several times.

\smallskip

The remaining part of the paper is devoted to a thorough study of disjoint frequent hypercyclicity of another very important class of operators
in linear dynamics: composition operators. Let $\varphi$ be a self-map of the unit disc $\DD$ (in what follows, self-map will always mean holomorphic self-map).
The composition operator $C_\varphi(f)=f\circ\varphi$ defines a bounded operator both on $H^2(\DD)$ and on $H(\DD)$.
We first focus on $H^2(\DD)$. The study of the dynamical properties of $C_\varphi$ is
rather complicated and is only well understood
if $\varphi$ is a linear fractional map. We will denote by $\lfm$ the class of these maps.  In that case, we may always assume that $\varphi$ has no fixed point in $\DD$ 
so that it has an attractive fixed point on the unit circle $\TT$. We know that $C_\varphi$ is frequently hypercyclic if and only if $\varphi$ is 
hyperbolic or a parabolic automorphism (see \cite{BM09} for instance). Moreover, when $\varphi_1\neq\varphi_2$, $C_{\varphi_1}$ and $C_{\varphi_2}$ are
disjointly hypercyclic if and only if either they do not have the same attractive fixed point or they have the same attractive fixed point $\alpha\in\TT$ and $\varphi_1'(\alpha)\neq\varphi_2'(\alpha)$.
We complete the picture with a complete characterization of disjoint frequent hypercyclicity.

\begin{theorem}\label{thm:coh2}
Let $\varphi_1,\varphi_2\in \lfm$, with $\varphi_1\neq\varphi_2$ and denote by $\alpha_1,\alpha_2$ their respective attractive fixed points. Then $C_{\varphi_1}$ and 
$C_{\varphi_2}$ are disjointly frequently hypercyclic on $H^2(\DD)$ if and only if
\begin{enumerate}[(a)]
 \item each symbol $\varphi_i$ is either a parabolic automorphism or a hyperbolic linear fractional map;
 \item $\alpha_1\neq\alpha_2$ or $\alpha_1=\alpha_2=\alpha$ and
 \begin{enumerate}[(i)]
 \item either $\varphi_1$ and $\varphi_2$ are hyperbolic with $\varphi_1'(\alpha)\neq\varphi_2'(\alpha)$;
 \item or $\varphi_1$ and $\varphi_2$ are parabolic;
 \item or $\varphi_1$ is parabolic and $\varphi_2$ is hyperbolic and is not an automorphism;
 \item or $\varphi_2$ is parabolic and $\varphi_1$ is hyperbolic and is not an automorphism.
 \end{enumerate}
\end{enumerate}
\end{theorem}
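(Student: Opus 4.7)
The proof divides into necessity and sufficiency. For necessity, condition (a) follows at once from the fact that disjoint frequent hypercyclicity of $(C_{\varphi_1},C_{\varphi_2})$ forces each $C_{\varphi_i}$ to be frequently hypercyclic, which by the theorem recalled from \cite{BM09} characterises each $\varphi_i$ as a parabolic automorphism or a hyperbolic linear fractional map. Combined with the known characterisation of ordinary disjoint hypercyclicity recalled in the introduction, this accounts for most of (b); what remains is to rule out directly the residual configuration in which $\alpha_1=\alpha_2=\alpha$, one symbol is a parabolic automorphism and the other a hyperbolic automorphism (a configuration permitted by disjoint hypercyclicity but forbidden by the theorem).

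Ruling out this residual configuration is the main obstacle. The plan is to conjugate to the upper half plane model sending the common fixed point $\alpha$ to $\infty$, so that we may assume $\varphi_1(w)=w+ic$ with $c\in\RR^*$ and $\varphi_2(w)=\lambda w$ with $\lambda>1$. Both symbols then belong to the Borel subgroup of $\aut$ stabilising $\alpha$ and satisfy the algebraic identity $\varphi_2^n\varphi_1^k\varphi_2^{-n}=\varphi_1^{\lambda^n k}$ (after extending iterates to real parameters), which couples the two dynamics much more tightly than plain disjoint hypercyclicity can detect. Picking a point $z_0$ close to $\alpha$, one can then compare the two return-time sets $\{n:C_{\varphi_i}^n f(z_0)\in V_i\}$ for a supposed disjointly frequently hypercyclic $f\in H^2(\DD)$: since $\varphi_2^n(z_0)$ runs to $\alpha$ exponentially along a hyperbolic geodesic while $\varphi_1^n(z_0)$ does so only linearly along a horocycle, a quantitative Koenigs-type asymptotic should show that these two sets cannot simultaneously have positive lower density. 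The contradiction must exploit the density condition itself, since neither rate of escape to $\alpha$ alone yields an obstruction.

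For sufficiency, I would apply the disjoint frequent hypercyclicity criterion proved earlier in the paper, which requires only an increasing sequence $(n_k)$ with $n_k\le Ck$, a dense subset $Y\subset H^2(\DD)$, and, for every $y_1,y_2\in Y$, sequences $(z_{j,k})$ with $C_{\varphi_i}^{n_k}z_{j,k}\to\delta_{ij}y_i$ in norm; in every configuration of (b) the choice $n_k=k$ should suffice. In case (b)(i), both symbols become distinct dilations $w\mapsto\lambda_i w$ in the half plane and a dense set separated by these scalings (for instance, functions concentrated on nested annuli in the half plane) delivers the required approximations through the explicit action of $C_{\varphi_i}^n$ on Koenigs-type eigenfunctions. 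Case (b)(ii) is handled analogously using two distinct parabolic translations $w\mapsto w+ic_i$ and a Fourier-type separation in the vertical variable. Cases (b)(iii) and (b)(iv) are easier since a hyperbolic non-automorphism strictly contracts the half plane into itself, so its iterates shrink any prescribed $y$ rapidly while the parabolic iterates merely translate, making the decoupling almost automatic. Finally, in the subcase $\alpha_1\neq\alpha_2$, test functions localised near the two distinct boundary fixed points make the two dynamics essentially independent, and the criterion is verified by standard reproducing-kernel estimates.
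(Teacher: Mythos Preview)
Your necessity outline is broadly on the right track, but the argument you sketch for the residual case (parabolic automorphism versus hyperbolic automorphism with the same fixed point) is not a proof: the algebraic identity $\varphi_2^n\varphi_1^k\varphi_2^{-n}=\varphi_1^{\lambda^n k}$ is correct but you never explain how it yields a contradiction, and ``a quantitative Koenigs-type asymptotic should show\ldots'' is a hope, not an argument. The paper's argument is concrete and different: it works on the half-plane with $\psi_1(w)=w+i\tau$, $\psi_2(w)=\lambda w$, and observes that if $F$ is frequently hypercyclic for $C_{\psi_1}$ then there is a sequence $n_k\le Ck$ with $\int_{n_k\tau}^{n_k\tau+\tau}|F(it)|^2\,dt\ge 1/2$, while if $F$ is hypercyclic for $C_{\psi_2}$ then for arbitrarily small $\veps$ and arbitrarily large $p$ one has $\int_0^{\lambda^p}|F(it)|^2\,dt\le \lambda^p\veps$; counting how many of the intervals $(n_k\tau,n_k\tau+\tau)$ fit inside $[0,\lambda^p]$ gives the contradiction. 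In fact this proves more: any frequently hypercyclic vector for $C_{\varphi_1}$ fails to be even hypercyclic for $C_{\varphi_2}$.

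For sufficiency there is a genuine error in your plan. You assert that ``in every configuration of (b) the choice $n_k=k$ should suffice'', but this is precisely what fails in cases (b)(i) and (b)(ii) when the attractive fixed points coincide. Take (b)(i) with $\psi_1(w)=\lambda w$ and $\psi_2(w)=\lambda^r w$, $r>1$: the natural choice $S_k(F,G)=F(\lambda^{-k}\cdot)+G(\lambda^{-rk}\cdot)$ leads, after applying $C_{\psi_2}^k$, to terms like $F(\lambda^{rk-(k+l)}\cdot)$, and for $l$ near $(r-1)k$ the exponent $rk-(k+l)$ is small, so these terms neither decay nor can be summed --- the required uniform unconditional convergence breaks down. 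The paper's fix is to restrict $(n_k)$ to $\NN\cap\bigcup_p(\omega^p,s\omega^p)$ for suitable $\omega,s$, which still has positive lower density but forces $|rn_k-n_{k+l}|$ to be large for every $l$. The same obstruction and the same remedy appear in the parabolic case (b)(ii) with translation parameters of the same sign. Your sketches of the individual cases (``nested annuli'', ``Fourier-type separation'', ``almost automatic'') do not engage with these cancellation issues, nor with the substantial analytic lemmas (tailored dense sets with prescribed zero behaviour, $H^2$ norm estimates for $f\circ\varphi_2^{-l}\circ\varphi_1^k$) that the paper develops to verify conditions (C1)--(C4) of the criterion; note also that the criterion demands uniform unconditional convergence of several series, not merely the termwise convergence $C_{\varphi_i}^{n_k}z_{j,k}\to\delta_{ij}y_i$ that you state.
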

Therefore, if $(\varphi_1,\varphi_2)\in \lfm\times \lfm$ is such that $C_{\varphi_1}$ and $C_{\varphi_2}$ are frequently hypercyclic, then 
\begin{itemize}
 \item either $\varphi_1$ and $\varphi_2$ are both hyperbolic with the same attractive fixed point and the same derivative at this point: in that case, $C_{\varphi_1}$
 and $C_{\varphi_2}$ are not even disjointly hypercyclic;
 \item or $\varphi_1$ and $\varphi_2$ share the same attractive fixed point, with $\varphi_1$ parabolic and $\varphi_2$ a hyperbolic automorphism, 
 or $\varphi_2$ parabolic and $\varphi_1$ a hyperbolic automorphism: in that case, $C_{\varphi_1}$ and $C_{\varphi_2}$ are disjointly hypercyclic, but not
 disjointly frequently hypercyclic;
 \item or $C_{\varphi_1}$ and $C_{\varphi_2}$ are disjointly frequently hypercyclic. 
\end{itemize}
The proof of Theorem \ref{thm:coh2} will be rather long. It will divided into several subcases, depending on the nature of the maps $\varphi_i$ and 
the position of their fixed points. Each subcase will require a different argument.
We will need several lemmas which are of independent interest. Indeed, we will have to quantify precisely how the iterates of a linear fractional
map converge to its attractive fixed point. We will also exhibit dense subsets of functions in $H^2(\DD)$ with prescribed set of zeros and with a rather precised
control of their behaviour near these zeros.

For our second result on disjointly frequently hypercyclic composition operators, we enlarge the space $H^2(\DD)$ to $H(\DD)$. In this space, it is easier
to be frequently hypercyclic (for instance, any parabolic linear fractional map of $\DD$ induces a frequently hypercyclic composition operator on $H(\DD)$,
even if it is not an automorphism). Disjoint hypercyclicity in this context was first studied in \cite{BeMaPe11} when the symbols
are linear fractional maps of $\DD$. We shall investigate disjoint frequent hypercyclicity. Using the linear fractional model of \cite{BoSh97},
we intend to state a result which can be applied to a much larger class of symbols (thus leading to new results even in the context of disjoint
hypercyclicity). For the relevant definitions, see Section \ref{sec:cohol}.

\begin{theorem}\label{thm:cohol}
 Let $\varphi_1$ and $\varphi_2$ be univalent and regular self-maps of $\DD$ with respective Denjoy-Wolff points $\alpha_1,\ \alpha_2\in\TT$.
 When $\alpha_1\neq\alpha_2$, $C_{\varphi_1}$ and $C_{\varphi_2}$ are always disjointly frequently hypercyclic. When $\alpha=\alpha_1=\alpha_2$,
 $C_{\varphi_1}$ and $C_{\varphi_2}$ are disjointly frequently hypercyclic if and only if
 \begin{enumerate}[(a)]
  \item either $\varphi_1$ and $\varphi_2$ are hyperbolic with $\varphi_1'(\alpha)\neq\varphi_2'(\alpha)$;
  \item or $\varphi_1$ and $\varphi_2$ are parabolic with $(\varphi_1''(\alpha),\varphi_1^{(3)}(\alpha))\neq (\varphi_2''(\alpha),\varphi_2^{(3)}(\alpha))$;
  \item or $\varphi_1$ is hyperbolic, $\varphi_2$ is parabolic and $\varphi_2''(\alpha)\in i\mathbb R$;
  \item or $\varphi_1$ is parabolic, $\varphi_2$ is hyperbolic and $\varphi_1''(\alpha)\in i\mathbb R$.
 \end{enumerate}

\end{theorem}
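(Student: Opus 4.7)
My plan is to follow the strategy of Theorem \ref{thm:coh2}, with two main ingredients: the linear fractional model of Bourdon--Shapiro \cite{BoSh97}, and the disjoint frequent hypercyclicity criterion from Section 2, which only requires an increasing sequence $(n_k)$ with $n_k \le Ck$. When $\alpha_1 \neq \alpha_2$, the Denjoy--Wolff theorem gives $\varphi_i^n \to \alpha_i$ uniformly on compact sets, and it suffices, in the criterion, to take $n_k = k$ and to build for each $k$ and each $j \in \{1,2\}$ a function $u_k^{(j)} \in H(\DD)$ vanishing to high order at $\alpha_{3-j}$ and such that $C_{\varphi_j}^{k} u_k^{(j)}$ approximates a prescribed target $g_k^{(j)}$ on a compact exhaustion. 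The cross-term $C_{\varphi_{3-j}}^{k} u_k^{(j)}$ is then negligible, the required order of vanishing being controlled by the fact that $\varphi_{3-j}^k$ pushes compact sets into arbitrarily small neighborhoods of $\alpha_{3-j}$. The existence of such approximants follows from the fact that $C_{\varphi_j}$, being induced by a univalent symbol, has enough range on $H(\DD)$, combined with Runge's theorem.

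When $\alpha_1 = \alpha_2 = \alpha$ the sufficiency rests on the linear fractional model: for $i = 1,2$ there exist a univalent Koenigs-type map $\sigma_i \colon \DD \to \Omega_i$ and a linear fractional map $\Phi_i$ of a half-plane, strip, or plane with $\sigma_i \circ \varphi_i = \Phi_i \circ \sigma_i$. The conjugacy class of $\Phi_i$ is determined by $\varphi_i'(\alpha)$ in the hyperbolic case and by $(\varphi_i''(\alpha), \varphi_i^{(3)}(\alpha))$ in the parabolic case, so each of the conditions (a)--(d) corresponds to a compatible pair of linear fractional models on which a construction analogous to the one behind Theorem \ref{thm:coh2} goes through; I then transfer that construction back to $\DD$ through the $\sigma_i$'s. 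The extra flexibility over the $H^2$ framework is that I am not constrained by membership in the Hardy space, which is precisely what makes cases (c) and (d), pairing a parabolic automorphism-type symbol with a hyperbolic one, admissible in $H(\DD)$ even though their analogues fail in Theorem \ref{thm:coh2}.

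For the necessity in the case $\alpha_1 = \alpha_2 = \alpha$, if none of (a)--(d) holds then either $\varphi_1$ and $\varphi_2$ share a common linear fractional model, or one is hyperbolic and the other is parabolic of non-automorphism type with $\varphi''(\alpha) \notin i\RR$. In the first subcase the iterates $\varphi_1^n$ and $\varphi_2^n$ are asymptotically indistinguishable on every compact set, which forces $(C_{\varphi_1}^n f, C_{\varphi_2}^n f)$ to accumulate on the diagonal of $H(\DD)^2$ and rules out visits to a neighborhood of $(0,1)$ with positive lower density. In the second subcase, a quantitative asymptotic comparison near $\alpha$ shows that a parabolic non-automorphism compresses compacta into a horocycle tangent at $\alpha$ much more rapidly than a hyperbolic map; the resulting speed mismatch prevents disjoint frequent hypercyclicity.

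The main obstacle will be the sufficiency in case (b), where two parabolic symbols sharing the Denjoy--Wolff point and $\varphi'(\alpha) = 1$ must be separated using only the second and third derivatives. Turning the inequality $(\varphi_1''(\alpha),\varphi_1^{(3)}(\alpha)) \neq (\varphi_2''(\alpha),\varphi_2^{(3)}(\alpha))$ into a quantitative separation of $\varphi_1^n$ and $\varphi_2^n$ on a fixed compact set, strong and uniform enough in $n$ to feed the Section 2 criterion, requires the sharp asymptotic expansion of the iterates supplied by the Koenigs function of the linear fractional model. This is the $H(\DD)$-analogue of the hardest subcase of Theorem \ref{thm:coh2}, and it is the step where the linear fractional model must be used in its most detailed form.
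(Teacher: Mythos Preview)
Your plan diverges from the paper's approach in a way that leaves real gaps. For the sufficiency direction the paper does \emph{not} use the criterion of Section~2 at all on $H(\DD)$; instead the entire argument rests on Lemma~\ref{lem:criterioncomposition}: one shows that for suitable sets $A_k$ of positive lower density the images $\varphi_i^n(K_k)$ are pairwise disjoint, and then Carleman approximation produces the disjointly frequently hypercyclic vector directly. Your proposal to build approximants $u_k^{(j)}$ ``vanishing to high order at $\alpha_{3-j}$'' and to ``transfer through the $\sigma_i$'s'' is problematic: the $\alpha_i$ are boundary points where elements of $H(\DD)$ have no values, the two Koenigs maps $\sigma_1,\sigma_2$ are unrelated univalent maps with different images, and there is no mechanism to glue a half-plane construction back into a single $f\in H(\DD)$. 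The paper's separation arguments are quite concrete: hyperbolic distance for $\alpha_1\neq\alpha_2$ and for the tangential/nontangential split (Theorem~\ref{thm:wellseparated}, Corollaries~\ref{cor:differentattractive} and~\ref{cor:tangentialandnontangential}), a lacunary choice $(n_k)\subset\bigcup_p[\omega^p,s\omega^p]$ for two hyperbolic maps (Theorem~\ref{thm:hyperbolicsameattractive}), and the orbit expansion $\psi^n(w)=an+b\log n+O(1)$ combined with Corollary~\ref{cor:setscoholpar} for two parabolic maps.

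The more serious gap is in your necessity argument for the mixed case $\varphi_1$ hyperbolic, $\varphi_2$ parabolic with $\Re e(\varphi_2''(\alpha))>0$. You describe this as a ``speed mismatch'' that prevents disjoint frequent hypercyclicity, analogous to the common-model case. But in fact $C_{\varphi_1}$ and $C_{\varphi_2}$ \emph{are} disjointly hypercyclic here (the iterates on any fixed compact set are eventually disjoint), so no argument based on overlapping orbits or accumulation on the diagonal can work. The paper's proof (Theorem~\ref{thm:coholparhyp}) is of a different nature: one shows that the hyperbolic image $\psi_1^p(\mathcal R)$ of a suitable rectangle contains a rectangle $\mathcal R(\lambda^p w_0,4\delta,1)$ (Lemma~\ref{lem:koebehyperbolic}), and that a proportion at least $1-8\delta$ of the parabolic orbit points $\psi_2^n(1)$ with $n\leq \lambda^p x/\rho$ land in that rectangle. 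If $f$ were hypercyclic for $C_{\psi_1}$ with $|f\circ\psi_1^p-1|<1/2$ on $\mathcal R$, this forces $\ldens\{n:|f\circ\psi_2^n(1)|<1/2\}\leq 8\delta$, contradicting frequent hypercyclicity for $C_{\psi_2}$. This density-counting argument is the crux, and nothing in your sketch approaches it. (Incidentally, parabolic orbits move like $an$ in the half-plane and hyperbolic ones like $\lambda^n$, so the hyperbolic map is the faster one, not the parabolic.)
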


\noindent {\bf Notation.} Throughout this paper we use the notation $f\lesssim g$ for nonnegative
functions $f$ and $g$ to mean that there exists $C > 0$ such that $f\leq C g$, where $C$
does not depend on the associated variables.


\section{A criterion for disjoint frequent hypercyclicity}
We first state a criterion for disjoint frequent hypercyclicity.  Observe that it can be thought as a mixing of the disjoint 
hypercyclicity criterion and the frequent hypercyclicity criterion. We recall that, given a family $(x_{n,k})_{n\in \mathbb N,k\in I}\subset X$, the series $\sum_n x_{n,k}$ are said to be unconditionally convergent in $X$, uniformly in $k\in I$ if, for any $\veps>0$, there exists $N\geq 1$ such that, for any $k\in I$, any $F\subset [N,+\infty)$ finite, 
\begin{equation}\label{eq:unconditional}
 \left\|\sum_{n\in F}x_{n,k}\right\|\leq\veps.
\end{equation}
This implies in particular that given any $A\subset\mathbb N$, each series $\sum_{n\in A}x_{n,k}$ is convergent and \eqref{eq:unconditional}
remains true when $F\subset[N,+\infty)$ has infinite cardinal number.

\begin{theorem}\label{thm:dfhcc}
Let $X$ be a separable $F$-space, let $T_1,\dots,T_N\in\mathcal L(X)$. Assume that there exists a dense set $Y\subset X^N$, an increasing sequence of integers $(n_k)$ with positive lower density, maps $S_{n_k}:Y\to X$ such that, for all $y\in Y$, the following assumptions are satisfied:
\begin{itemize}
\item[(C1)] $\sum_{k=0}^{+\infty}S_{n_k}y$ converges unconditionally in $X$.
\item[(C2)] for all $j=1,\dots,N$, $\sum_{l=0}^{+\infty}T_j^{n_k}S_{n_{k+l}}y$ converges unconditionally in $X$, uniformly in $k\in\mathbb N$.
\item[(C3)] for all $j=1,\dots,N$, $\sum_{l=0}^{k}T_j^{n_k}S_{n_{k-l}}y$ converges unconditionally in $X$, uniformly in $k\in\mathbb N$.
\item[(C4)] for all $j=1,\dots,N$, $T_j^{n_k}S_{n_k}y\to y_j$.
\end{itemize}
Then $T_1,\dots,T_N$ are disjointly frequently hypercyclic.
\end{theorem}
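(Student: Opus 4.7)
The plan is to deduce this criterion from Theorem 2.1 of \cite{MMP22}, which provides disjoint frequent hypercyclicity once one exhibits disjoint subsets of $\mathbb N$ of positive lower density together with suitable convergence data. The role of the new hypothesis---the sequence $(n_k)$ of positive lower density---is precisely to furnish these subsets, via a density-preserving partition, so the user no longer needs to construct them by hand.

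First I would fix a countable sequence $(y^{(p)})_{p \geq 1} \subset Y$ dense in $X^N$, then apply the classical partition lemma underlying the Bayart--Grivaux frequent hypercyclicity criterion to obtain a partition $\mathbb N = \bigsqcup_{p \geq 1} B_p$ into sets of positive lower density together with an increasing separation sequence $(N_p)$, $N_p \to \infty$, such that whenever $m \in B_p$, $n \in B_q$ and $(p,m) \neq (q,n)$, one has $|m - n| \geq N_{\max(p,q)}$. The sequence $(N_p)$ is calibrated, inductively as the partition is built, to dominate the tail thresholds arising from (C2) and (C3) applied to $y^{(q)}$ for $q \leq p$ and $j = 1, \dots, N$. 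Setting $A_p := \{n_k : k \in B_p\}$, the hypothesis $n_k \lesssim k$ transfers positive lower density from $B_p$ to $A_p$, giving disjoint subsets of $\mathbb N$ of positive lower density.

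The candidate vector is
\[
x \;=\; \sum_{p \geq 1} \sum_{k \in B_p} S_{n_k}\, y^{(p)},
\]
whose convergence in $X$ follows from (C1) upon further requiring $\min B_p$ to be large enough that each inner sum has norm below $2^{-p}$. The heart of the proof is to check, for each $p$ and each $j$, that $T_j^{n_k} x \to y_j^{(p)}$ as $k \to \infty$ through $B_p$. Expanding
\[
T_j^{n_k} x - y_j^{(p)} \;=\; \bigl(T_j^{n_k} S_{n_k} y^{(p)} - y_j^{(p)}\bigr) \;+\; \sum_{q \geq 1}\, \sum_{\substack{l \in B_q \\ l \neq k}} T_j^{n_k} S_{n_l}\, y^{(q)},
\]
the diagonal term is handled by (C4). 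For the rest, split each inner sum according to $l > k$ or $l < k$; the separation property forces the shifts $|l - k|$ to lie in $[N_{\max(p,q)}, +\infty)$, and the calibration of $(N_p)$ combined with (C2) and (C3) then bounds each inner sum by roughly $\varepsilon\, 2^{-q}$, summing over $q$ to $O(\varepsilon)$. Applied simultaneously to all $j$, this gives $(T_1^{n_k} x, \dots, T_N^{n_k} x) \to y^{(p)}$ along $B_p$.

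The main technical obstacle is this interleaving of the partition's separation parameters with the tail thresholds of (C2) and (C3): one must build the partition, the calibration of $(N_p)$, and the enumeration $(y^{(p)})$ in a carefully chosen order so that near-diagonal contributions vanish automatically and far-diagonal ones decay summably in $q$. Once the convergence along $A_p$ is secured, density of $(y^{(p)})$ in $X^N$ converts it into the statement that $\{n \in \mathbb N : (T_1^n x, \dots, T_N^n x) \in U\}$ has positive lower density for every nonempty open $U \subset X^N$, which is the disjoint frequent hypercyclicity of $T_1, \dots, T_N$.
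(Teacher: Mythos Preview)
Your proposal is correct and follows essentially the same route as the paper: choose $N_p$ from the tail thresholds in (C1)--(C4), invoke the Bayart--Grivaux separation lemma (\cite[Lemma 6.19]{BM09}) to get disjoint sets $B_p\subset\mathbb N$ of positive lower density with $|m-n|\geq N_p+N_q$, transfer to $A_p=\{n_k:k\in B_p\}$ via the hypothesis $n_k\lesssim k$, and conclude. The only cosmetic difference is that the paper verifies the hypotheses of Theorem~2.1 of \cite{MMP22} and cites it as a black box, whereas you unpack that step and build the vector $x=\sum_p\sum_{k\in B_p}S_{n_k}y^{(p)}$ directly; your summability estimate over $q$ (splitting $q\leq p$ and $q>p$) is exactly what that theorem does internally.
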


\begin{proof}
Let $(\veps_p)$ be any sequence of positive real numbers which decreases to zero. Let $(y(p))_{p\in\mathbb N}$ be a dense sequence in $Y$. 
The assumptions give us, for each $p\geq 1$, the existence of an integer $N_p$ such that, for any $F\subset [N_p,+\infty)$, 
\begin{enumerate}[(i)]
\item for all $q\leq p$, $\left\|\sum_{i\in F}S_{n_i}y(q)\right\|<\veps_p$.
\item for all $j=1,\dots,N$, for all $k\geq 0$, for all $q\leq p$, 
$$\Bigg\|\sum_{l\in F}T_j^{n_k}S_{n_{k+l}}y(q)\Bigg\|<\veps_p.$$
\item for all $j=1,\dots,N$, for all $k\geq 0$, for all $q\leq p$, 
$$\Bigg\|\sum_{l\in F\cap [0,k]}T_j^{n_k}S_{n_{k-l}}y(q)\Bigg\|<\veps_p.$$
\item for all $j=1,\dots,N$, for all $k\geq N_p$, $\big\|T_j^{n_k}S_{n_k}y(p)-y_j(p)\big\|<\veps_p$.
\end{enumerate}
We then consider a sequence $(B_p)$ of disjoint subsets of $\mathbb N$ with positive lower density and such that $\min(B_p)\geq N_p$ and $|n-n'|\geq N_p+N_{p'}$ for all $n\in B_p$ and all $n'\in B_p$ with $n\neq n'$, for all $p,p'\geq 1$ (see \cite[Lemma 6.19]{BM09}). Let finally $A_p=\{n_k:\ k\in B_p\}$ which has positive lower density. We verify that the assumptions of Theorem 2.1 of \cite{MMP22} are satisfied. First of all, let $\veps>0$ and let $p_0$ be such that $\veps_{p_0}<\veps$. Then, for $p\leq p_0$, (i) yields for all $F\subset[N_{p_0},+\infty)$, 
$$\Bigg\|\sum_{n\in F\cap A_p} S_n y(p)\Bigg\|<\veps_{p_0}<\veps.$$
For $p>p_0$, since $\min(A_p)\geq N_p$, we also get by (i)
$$\Bigg\|\sum_{n\in F\cap A_p} S_n y(p)\Bigg\|<\veps_{p}<\veps.$$
Therefore the series $\sum_{n\in A_p}S_n y(p)$ converge unconditionally in $X$, uniformly in $p$.
Consider now $p\geq 1$ and let $j\in\{1,\dots,N\}$. Let $n\in\bigcup_{q\geq 1}A_q$ and write it $n=n_k$ for some $k$.  Then 
\begin{align*}
\Bigg\|\sum_{i\in A_p\backslash\{n\}} T_j^n S_i y(p)\Bigg\|&\leq 
\Bigg\|\sum_{\substack{i\in A_p, i>n}} T_j^n S_i y(p)\Bigg\|+\Bigg\|\sum_{\substack{i\in A_p, i<n}} T_j^n S_i y(p)\Bigg\|\\
&\leq \Bigg\|\sum_{l\in F_1}T_j^{n_k}S_{n_{k+l}}y(p)\Bigg\|+\Bigg\|\sum_{l\in F_2}T_j^{n_k}S_{n_{k-l}}y(p)\Bigg\|
\end{align*}
where both sets $F_1$ and $F_2$ are contained in $[N_p,+\infty)$ by the definition of $A_p$. Therefore, by (ii) and (iii),
$$\Bigg\|\sum_{i\in A_p\backslash\{n\}} T_j^n S_i y(p)\Bigg\|\leq 2\veps_p.$$
On the other hand, for  any $n\in A_p$, any $q<p$, we write 
$$\Bigg\|\sum_{i\in A_q}T_j^n S_i y(q)\Bigg\|\leq 
\Bigg\| \sum_{\substack{i\in A_q, i>n}}T_j^n S_i y(q)\Bigg\|+
\Bigg\| \sum_{\substack{i\in A_q, i<n}}T_j^n S_i y(q)\Bigg\|$$
and we find again that this is less than $2\veps_p$, applying again (ii) and (iii). Finally, the last assumption
of Theorem 2.1 of \cite{MMP22} follows directly from (iv), using $\min(A_p)\geq N_p$.
\end{proof}

To prove the uniform unconditional convergence of the series which come into play in the statement of Theorem \ref{thm:dfhcc}, we will use the following easy fact whose proof is left to the reader:
\begin{lemma}\label{lem:uniformunconditional}
Let $X$ be an $F$-space and $(y_l(k))_{l,k\geq 0}$ be vectors in $X$. 
\begin{enumerate}[(a)]
\item The series $\sum_{l=0}^{+\infty}y_l(k)$ converge unconditionally, uniformly in $k$, provided one of the following conditions is satisfied:
\begin{itemize}
\item there exists $(\veps_n)\in\ell^1(\mathbb N)$ such that,  for all $k, l\geq 0$, $\|y_l(k)\|\leq \veps_l$;
\item there exists $(\veps_n)\in c_0(\mathbb N)$ such that, for all $k\geq 0$, $\sum_l \|y_l(k)\|\leq \veps_k$;
\item there exists $(\veps_n)\in c_0(\mathbb N)$ such that, for all  $k\geq 0$,
for all $F\subset\NN$ finite, $\|\sum_{l\in F}y_l(k)\|\leq \veps_{\min(F)}$;
\item for all $k\geq 0$, $\sum_l y_l(k)$ converges unconditionally and there exists $(\veps_n)\in c_0(\mathbb N)$ such that, for all $k\geq 0$, for all $F\subset \mathbb N$ finite,
$\left\|\sum_{l\in F}y_l(k)\right\|\leq \veps_k$.
\end{itemize}
\item The series $\sum_{l=0}^k y_l(k)$ converge unconditionally, uniformly in $k$, provided one of the following conditions is satisfied:
\begin{itemize}
\item there exists $(\veps_n)\in\ell^1(\mathbb N)$ such that,  for all $k\geq l\geq 0$, $\|y_l(k)\|\leq \veps_l$;
\item  there exists $(\veps_n)\in c_0(\mathbb N)$ such that, for all $k\geq 0$,
for all $F\subset\NN$ finite, $\|\sum_{l\in F\cap [0,k]}y_l(k)\|\leq\veps_k$.
\end{itemize}
\end{enumerate}
\end{lemma}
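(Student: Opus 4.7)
The plan is to verify each of the six criteria by unwinding the definition of uniform unconditional convergence from \eqref{eq:unconditional}: given $\veps>0$, one must produce $N\geq 1$ such that $\|\sum_{l\in F}y_l(k)\|<\veps$ (respectively $\|\sum_{l\in F\cap[0,k]}y_l(k)\|<\veps$) for every $k\geq 0$ and every finite $F\subset [N,+\infty)$. The only tools needed are subadditivity of the F-norm on $X$ and the fact that convergent series of nonnegative reals have small tails. I would treat the items one by one in roughly increasing order of difficulty.

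The easiest items in (a) are the first and third. Under the first hypothesis, choose $N$ with $\sum_{l\geq N}\veps_l<\veps$; subadditivity then gives $\|\sum_{l\in F}y_l(k)\|\leq\sum_{l\in F}\veps_l<\veps$ uniformly in $k$. Under the third hypothesis, the bound $\|\sum_{l\in F}y_l(k)\|\leq \veps_{\min F}$ is already uniform in $k$, so choosing $N$ with $\veps_n<\veps$ for $n\geq N$ works directly, since $\min F\geq N$ whenever $F\subset[N,+\infty)$.

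For the second and fourth items of (a), the obstacle is that the hypothesis controls the series well only for $k$ large (through $\veps_k\to 0$), whereas uniform unconditional convergence demands a single threshold valid for every $k$. The standard remedy is a finite/infinite split: first pick $K$ with $\veps_k<\veps$ for $k\geq K$, which handles the tail in $k$; then for each of the finitely many $k<K$, use the hypothesis to pick an individual threshold $N_k$ with $\|\sum_{l\in F}y_l(k)\|<\veps$ for $F\subset[N_k,+\infty)$. Under the second hypothesis this comes from the absolute convergence of $\sum_l\|y_l(k)\|$; under the fourth, from the unconditional convergence postulated for each fixed $k$. Setting $N=\max(K,N_0,\dots,N_{K-1})$ yields the desired uniform threshold.

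For part (b), the first item is identical to the first of (a), since the hypothesis bounds $\|y_l(k)\|$ exactly on the range of summation. For the second, pick $K$ with $\veps_k<\veps$ for $k\geq K$ and set $N=K$; then for $F\subset[N,+\infty)$ and any $k$, either $k<K$, in which case $F\cap[0,k]=\emptyset$ and the sum vanishes, or $k\geq K$, in which case the hypothesis yields the bound $\veps_k<\veps$. There is no real obstacle here beyond bookkeeping: the conceptual content is just that $c_0$-smallness in $k$ together with unconditional or absolute convergence for each fixed $k$ can always be amalgamated into a single threshold $N$ uniform in $k$.
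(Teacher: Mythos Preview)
Your proof is correct; each of the six verifications unwinds the definition \eqref{eq:unconditional} exactly as intended, and the finite/infinite split in $k$ for the second and fourth items of (a) is the right way to handle the $c_0$-in-$k$ hypotheses. The paper leaves this lemma to the reader, so there is no alternative argument to compare against; your write-up is precisely the routine verification the author had in mind.
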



\section{Disjoint frequent hypercyclicity of (pseudo)-shifts}

\subsection{An example of two disjoint frequently hypercyclic weighted shifts}

\begin{theorem}
There exist two disjointly frequently hypercyclic weighted shifts on $\ell^1(\mathbb N)$.
\end{theorem}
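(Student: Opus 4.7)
The plan is to apply Theorem \ref{thm:dfhcc} (with $N=2$ and $X=\ell^1(\mathbb N)$) to two carefully constructed bounded weighted backward shifts $B_{w_1},B_{w_2}$, with weights taken in a finite alphabet such as $\{1,2\}$ so that both operators have bounded norm. The dense set $Y$ will be a subset of $c_{00}(\mathbb N)\times c_{00}(\mathbb N)$, and the sequence $(n_k)$ will be a specific increasing sequence of positive lower density obtained from a block construction.

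The central difficulty specific to weighted shifts is that both $B_{w_j}^{n_k}$ shift by exactly $n_k$: for any $y$ and any $l\geq 1$, the coefficient of $S_{n_k}y$ at $e_{n_k+l}$ alone determines $(B_{w_j}^{n_k}S_{n_k}y)_l$ for both $j=1,2$. Consequently, condition (C4) forces the ratio of weight products $w_1^{(n_k)}(n_k+l)/w_2^{(n_k)}(n_k+l)$ to converge to $y_1(l)/y_2(l)$ as $k\to\infty$; for a dense set of pairs $y=(y_1,y_2)$, this limiting ratio must take many different values, which is the obstruction absent in the single-operator case.

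I would overcome this by choosing the weights via a block decomposition of $\mathbb N$, tuned so that along a fixed positive lower density sequence $(n_k)$ the family of weight-product ratios $\bigl(w_1^{(n_k)}(n_k+l)/w_2^{(n_k)}(n_k+l)\bigr)_{l}$ realizes a countable dense family of target ratios (one per element of a countable dense family of $y$'s, extracted by diagonalization). The map $S_{n_k}(y_1,y_2)$ would place the coefficients $(S_{n_k}y)_{n_k+l}=y_1(l)/w_1^{(n_k)}(n_k+l)$ so that $B_{w_1}^{n_k}S_{n_k}y=y_1$ exactly; the ratio matching then yields $B_{w_2}^{n_k}S_{n_k}y\to y_2$, giving (C4). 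Conditions (C1)--(C3) would reduce to estimates of the form $\|S_{n_k}y\|_{\ell^1}\lesssim \sum_l |y_j(l)|/w_j^{(n_k)}(n_k+l)$, which decay exponentially in $k$ provided the weight value $2$ occurs on enough consecutive positions; uniform unconditional convergence is then read off from Lemma \ref{lem:uniformunconditional}.

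The main obstacle is the combinatorial block construction: arranging positive lower density of good $n_k$'s simultaneously compatible with every $y$ in a dense family of $c_{00}(\mathbb N)\times c_{00}(\mathbb N)$, while also guaranteeing the geometric growth of weight products along the same sequence. These two demands push in opposite directions (matching more ratios requires more block types, whereas summability requires long runs of large weights), so a multi-scale construction with slowly growing block sizes and a carefully tuned interleaving of weight patterns will be needed to balance them.
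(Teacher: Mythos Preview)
You have correctly identified the central obstruction: since $B_{w_1}^{n_k}$ and $B_{w_2}^{n_k}$ shift by the same amount, the ratio
\[
\frac{(B_{w_1}^{n_k}S_{n_k}y)_l}{(B_{w_2}^{n_k}S_{n_k}y)_l}=\frac{w_1(l+1)\cdots w_1(l+n_k)}{w_2(l+1)\cdots w_2(l+n_k)}
\]
depends only on $l$ and $k$, not on $y$. However, your proposed resolution via Theorem~\ref{thm:dfhcc} cannot work. Condition (C4) demands $T_j^{n_k}S_{n_k}y\to y_j$ along the \emph{full} sequence $(n_k)$, for every $y\in Y$. If the above ratio has a limit $r_l$ as $k\to\infty$, then necessarily $y_1(l)/y_2(l)=r_l$ for all $y=(y_1,y_2)\in Y$ with $y_2(l)\neq 0$; but then $Y$ cannot be dense in $\ell^1\times\ell^1$. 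If instead the ratio oscillates (your ``diagonalization'' picture, where different $k$'s are assigned to different target pairs), then $T_j^{n_k}S_{n_k}y$ does not converge at all for a fixed $y$, and (C4) fails. Either way, Theorem~\ref{thm:dfhcc} with a single sequence $(n_k)$ is the wrong tool for two weighted shifts.

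The paper sidesteps this by abandoning Theorem~\ref{thm:dfhcc} and performing a direct construction instead (in effect returning to the more flexible Theorem~2.1 of \cite{MMP22} with its family $(A_p)$ of disjoint sets of positive lower density). One fixes a dense sequence $(x_p,y_p)$ in $\ell^1\times\ell^1$ and, on the block around each $n\in A_p$, \emph{defines} the weights $w_n,w'_n$ and the vector entries $z_n$ simultaneously so that $B_w^nz\approx x_p$ and $B_{w'}^n z\approx y_p$. Because the sets $A_p$ are pairwise disjoint, the weight ratios on different blocks are allowed to be different, which is exactly what the obstruction you found forbids under (C4). The weights are not taken from a finite alphabet; they lie in $[1/n,5]$ and are determined by the target coordinates $a_l^{(p)},b_l^{(p)}$. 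The route you should take, then, is either this direct construction or the underlying criterion of \cite{MMP22}, not Theorem~\ref{thm:dfhcc}.
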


\begin{proof}
We will simultaneously construct two weighted shifts $B_w$ and $B_{w'}$ with respective weight sequence $w=(w_n)_{n\geq 1}$, $w'=(w'_n)_{n\geq 1}$
and a disjointly frequently hypercyclic vector $z=(z_n)_{n\geq 0}\in \ell^1(\mathbb N)$. Let $(N_p)$ be an increasing sequence of integers satisfying $N_p\geq p$.
Several additional assumptions on $(N_p)$ will be required throughout the proof. Let $(A_p)_{p\geq 1}$ be a sequence of disjoint subsets of $\mathbb N$ such that, for all $p\in\NN$, 
we have $\ldens(A_p)>0$, $\min(A_p)\geq p$ and if $n\in A_p$, $m\in A_q$ with $n\neq m$, then $|n-m|\geq N_p+N_q$ (see \cite[Lemma 6.19]{BM09}).
Enumerate $\bigcup_p A_p$ as an increasing sequence $(n_k)_{k\geq 1}$ and define $n_0=1$. Let finally $(x_p,y_p)$ be a dense sequence in $\ell^1(\mathbb N)\times\ell^1(\mathbb N)$ which may be written 
$$x_p=(a_0^{(p)},\dots,a_{p-1}^{(p)},0,\dots)\textrm{ and }y_p=(b_0^{(p)},\dots,b_{p-1}^{(p)},0,\dots)$$
with $1/p\leq |a_i^{(p)}|,|b_i^{(p)}|\leq p$ for all $0\leq i\leq p-1$. 
We will make our construction inductively. We initialize it by setting 
$$w_n=w'_n=4\textrm{ and }z_n=0\textrm{ for all }n<n_1.$$
Now, let $k\geq 1$ and assume that we have built our sequences up to $n_k-1$ such that, for all $j<k$, they satisfy
\begin{enumerate}
\item[(WS1)] $w_{n_j}\cdots w_{n_{j+1}-1}=w'_{n_j}\cdots w'_{n_{j+1}-1}=4^{n_{j+1}-n_j}$.
\item[(WS2)] if $n_j\in A_p$ then for all $l\in\{0,\dots,p-1\}$ we have
\begin{enumerate}[(a)]
\item $w_{1+l}\cdots w_{n_j+l}z_{n_j+l}=a_l^{(p)}$.
\item $w'_{1+l}\cdots w'_{n_j+l}z_{n_j+l}=b_l^{(p)}$.
\item $|z_{n_j+l}|\leq p^{1+2l}/4^{n_j-1}$.
\item $z_n=0$ if $n\in\{n_j+p,\cdots,n_{j+1}-1\}$.
\end{enumerate}
\item[(WS3)] for all $n\leq n_k-1$, $1/n\leq |w_n|,|w'_n|\leq 5$.
\item[(WS4)] for all $n\leq n_k-1$, $|w_1\cdots w_n|,|w'_1\cdots w'_n|\geq 3^n$. 
\end{enumerate}
Next, if $n_k\in A_p$, then set
$$z_{n_k}=\frac p{4^{n_k-1}},\ w_{n_k}=\frac{a_0^{(p)}}{p},\ w'_{n_k}=\frac{b_0^{(p)}}{p}$$
so that (WS2) (a), (b), (c) are satisfied for $l=0$ and $j=k$. Moreover $1/p^2\leq |w_{n_k}|,|w'_{n_k}|\leq 1$ so that (WS3) is satisfied with $n=n_k$ provided 
\begin{equation}\label{eq:dfhcnp1}
N_p\geq p^2.
\end{equation}
For $l=1,\dots,p-1$, we then define
$$z_{n_k+l}=\frac{p^{1+2l}}{4^{n_k-1}},\ w_{n_k+l}=\frac{a_l^{(p)}w_l}{a_{l-1}^{(p)}p^2},\ w'_{n_k+l}=\frac{b_l^{(p)}w_l}{b_{l-1}^{(p)}p^2}$$
so that again (WS2) (a), (b), (c) and (WS3) are satisfied, provided now 
\begin{equation}\label{eq:dfhcnp2}
N_p\geq p^5
\end{equation}
since $w_{n_k+l}\geq w_l/p^4\geq 1/p^5$. Lastly, for $n_k+p\leq n\leq n_{k+1}-1$, we set $z_n=0$ and 
$$w_n=\left(\frac{4^{n_{k+1}-n_k}}{w_{n_k}\cdots w_{n_k+p-1}}\right)^{\frac1{n_{k+1}-n_k-p}}\textrm{ and }w'_n=\left(\frac{4^{n_{k+1}-n_k}}{w'_{n_k}\cdots w'_{n_k+p-1}}\right)^{\frac1{n_{k+1}-n_k-p}}$$
so that $w_{n_k}\cdots w_{n_{k+1}-1}=w'_{n_k}\cdots w'_{n_{k+1}-1}=4^{n_{k+1}-n_k}$. Moreover since 
$$\frac{1}{(p-1)! p^{2+2p}}\leq w_{n_k}\cdots w_{n_k+p-1}=\frac{w_1\cdots w_{p-1} a_{p-1}^{(p)}}{p^{1+2p}}\leq 5^p$$
we get on the one hand, for $n\geq n_k+p$, 
\begin{align*}
w_n&\leq 4^{\frac{n_{k+1}-n_k}{n_{k+1}-n_k-p}}\left((p-1)!p^{2+2p}\right)^{\frac 1{n_{k+1}-n_k-p}}\\
&\leq 4^{\frac{N_p}{N_p-p}}\left((p-1)!p^{2+2p}\right)^{\frac 1{N_p-p}}\leq 5
\end{align*}
provided 
\begin{equation}\label{eq:dfhcnp3}
4^{\frac{N_p}{N_p-p}}\left((p-1)!p^{2+2p}\right)^{\frac 1{N_p-p}}\leq 5.
\end{equation}
On the other hand 
$$w_n\geq \left(\frac 15\right)^{\frac{p}{n_{k+1}-n_k-p}}\geq \left(\frac 15\right)^{\frac{p}{N_p-p}}\geq\frac 1p\geq \frac 1n$$
provided 
\begin{equation}\label{eq:dfhcnp4}
\left(\frac 15\right)^{\frac{p}{N_p-p}}\geq \frac 1p.
\end{equation}
To finish the construction, it remains to verify (WS4) for $n\in\{n_k,\cdots, n_{k+1}-1\}$. Firstly, if $n=n_k+l$, $l=0,\dots,p-1$, then 
\begin{align*}
w_1\cdots w_{n_k+l}&\geq \frac{4^{n_k}}{(p-1)!p^{2+2p}}\\
&\geq 3^{n_k+l}\left(\frac 43\right)^{n_k}\left(\frac 13\right)^{l}\frac{1}{(p-1)!p^{2+2p}}\\
&\geq 3^{n_k+l}\left(\frac 43\right)^{N_p}\left(\frac 13\right)^{p}\frac{1}{(p-1)!p^{2+2p}}\\
&\geq 3^{n_k+l}
\end{align*}
provided 
\begin{equation}\label{eq:dfhcnp5}
\left(\frac 43\right)^{N_p}\left(\frac 13\right)^{p}\frac{1}{(p-1)!p^{2+2p}}\geq 1.
\end{equation}
For $n\in\{n_k+p,\cdots,n_{k+1}-1\}$, we distinguish two cases. If $w_{n_k}\cdots w_{n_k+p-1}\leq 4^p$, then $w_m\geq 4$ for all $m\in\{n_k+p,\cdots,n_{k+1}-1\}$ so that
$$w_1\cdots w_n\geq 3^{n_k+p-1}4^{n-(n_k+p-1)}\geq 3^n.$$
If $w_{n_k}\cdots w_{n_k+p-1}\geq 4^p$, then we start from the stronger inequality 
$w_1\cdots w_{n_k+p-1}\geq 4^{n_k+p-1}$ and the definition of $w_n$ on $\{n_k+p,\cdots,n_{k+1}-1\}$ ensures that $w_1\cdots w_n\geq 4^n$ for all $n=n_k+p,\dots,n_{k+1}-1$.

\medskip

We now claim that $z\in\ell^1(\mathbb N)$ and that it is a disjointly frequently hypercyclic vector for $B_w$ and $B_{w'}$. First of all, observe that 
\begin{align*}
\|z\|_1&=\sum_{p\geq 1}\sum_{n\in A_p}\sum_{l=0}^{p-1} |z_{n+l}|\\
&\leq \sum_{p\geq 1}\sum_{n\in A_p}\frac{p^{2+2p}}{4^{n-1}}\\
&\leq \frac 43\sum_{p\geq 1}\frac{p^{2+2p}}{4^{N_p-1}}<+\infty
\end{align*}
if we require
\begin{equation}\label{eq:dfhcnp6}
\sum_{p\geq 1}\frac{p^{2+2p}}{4^{N_p-1}}<+\infty.
\end{equation}
Finally, let $n\in A_p$. Then 
\begin{align*}
\|B_w^n z-x(p)\|_1&=\sum_{q\geq 1} \sum_{\substack{m\in A_q\\ m>n}} \sum_{l=0}^{q-1}|w_{m+l-n+1}\cdots w_{m+l}z_{m+l}|\\
&=\sum_{q\geq 1} \sum_{\substack{m\in A_q\\ m>n}} \sum_{l=0}^{q-1}\frac{|w_1\cdots w_l|\cdot |w_{1+l}\cdots w_{m+l}z_{m+l}|}{|w_1\cdots w_{m+l-n}|}\\
&\leq \sum_{q\geq 1} \sum_{\substack{m\in A_q\\ m>n}} \frac{5^qq^2}{3^{m-n}}.
\end{align*}
We split the sum into two parts. 
\begin{align*}
\sum_{q\leq p}\sum_{\substack{m\in A_q\\ m>n}} \frac{5^qq^2}{3^{m-n}}&\leq
\sum_{q\leq p}\sum_{s\geq N_p}\frac{5^pp^2}{3^s}\leq \frac 32\cdot \frac{5^p p^3}{3^{N_p}}
\end{align*}
whereas 
\begin{align*}
\sum_{q>p} \sum_{\substack{m\in A_q\\ m>n}} \frac{5^qq^2}{3^{m-n}}
&\leq \sum_{q>p}\sum_{s\geq N_q} \frac{5^qq^2}{3^s}\\
&\leq \frac 32\sum_{q>p} \frac{5^q q^2}{3^{N_q}}.
\end{align*}
Therefore if we require moreover 
\begin{equation}\label{eq:dfhcnp7}
\frac{5^pp^3}{3^{N_p}}\to_{p\to+\infty}0\textrm{ and }\sum_{q>p}\frac{5^q q^2}{3^{N_q}}\to_{p\to+\infty}0,
\end{equation}
we get $\|B_w^n z-x(p)\|\leq\veps(p)$, $\|B_{w'}^n z-y(p)\|\leq\veps(p)$ with $\veps(p)\to 0$ as $p\to+\infty$ which concludes the proof. Observe that the conditions on $(N_p)$
which appear in inequalities \eqref{eq:dfhcnp1} to \eqref{eq:dfhcnp7} are satisfied provided $(N_p)$ grows sufficiently fast and can be imposed at the very beginning of the proof.
\end{proof}

\begin{remark}
 Without any extra difficulty, we can modify the proof so that it works on any $\ell^p(\mathbb N)$, $p\in [1,+\infty)$, or on $c_0(\mathbb N)$.
\end{remark}

\subsection{Weakly partition regular pseudo-shifts}

The construction of frequently hypercyclic vectors or disjointly frequently hypercyclic vectors requires to exhibit
subsets of $\NN$ with positive lower density and satisfying some separation properties:
see e.g. \cite[Lemma 6.19]{BM09}, \cite[Theorem 5.15]{BCPalg} or \cite[Lemma 2.2]{MMP22}.
This leads the authors of \cite{MMP22} to impose a technical condition on the functions $(f_s)_{s=1,\dots,N}$ to state
their criterion for disjoint frequent hypercyclicity of a family $(T_{f_s,w_s})_{s=1,\dots,N}$ of pseudo-shifts: 
the functions $(f_s)_{s=1,\dots,N}$ have to be weakly partition regular.

\begin{definition}\label{def:wrp}
Let $f_s:\mathbb N\to\mathbb N$ be increasing maps, $s=1,\dots,N$ and let $\mathcal A$ be a family of subsets of $\mathbb N$. The family $\mathcal A$ is called $(f_s)_{s=1}^N-$\emph{weakly partition regular} is for any sequence $(B_j)_{j\geq 1}\subset\mathcal A$  there exists a sequence $(A_j)_{j\geq 1}\subset\mathcal A$ such that, for every $j,l\geq 1$, $A_j\subset B_j$ and 
\begin{equation}\label{eq:wpr}
f_s^m(\{1,\dots,j\})\cap f_t^n(\{1,\dots,l\})=\varnothing
\end{equation}
for all $m\in A_j$, for all $n\in A_l$, $m\neq n$, for all $s,t\in\{1,\dots,N\}.$
\end{definition}

We denote by $\Ald$ (resp. $\Aud$, $\Aubd$) the collection of subsets of $\NN$ with positive lower density (resp. upper density, resp. upper Banach density).
If $A\subset \mathbb N$, $\ldens(A)$ and $\udens(A)$ will mean respectively the lower density of $A$ and its upper density.

In \cite{MMP22}, it is shown that if $f_s(n)=n+q_s$ for some $q_s\geq 1$, which corresponds to the case of powers of weighted shifts, then $\Ald$ is $(f_s)$-weakly partition regular. 
If $q_s=1$, this is nothing else than \cite[Lemma 6.19]{BM09} which we have already used two times.
We now show that this remains true for all functions $(f_s)_{s=1,\dots,N}$. Hence, for studying disjoint frequent hypercyclicity of pseudo-shifts, we can dispense with the assumption ``$\Ald$ is $(f_s)$-weakly partition regular'' in the statement of Theorems 2.5 and 2.6 of \cite{MMP22}.

\begin{theorem}\label{thm:wrp}
Given any finite sequence $(f_s)_{s=1,\dots,N}$ of increasing maps $\mathbb N\to\mathbb N$, $\Ald$ is $(f_s)$-weakly partition regular.
\end{theorem}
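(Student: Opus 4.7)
My plan is to construct the sets $A_j$ inductively in $j$, exploiting the structural sparsity of collisions $f_s^m(i)=f_t^n(i')$. The key observation is that for each fixed quadruple $(s,t,i,i')$, this equation is the graph of a strictly increasing injective partial map $\phi_{s,t,i,i'}\colon m\mapsto n$, since both $f_s^{\cdot}(i)$ and $f_t^{\cdot}(i')$ are strictly increasing in the exponent (the implicit assumption $f_s(1)\geq 2$ gives $f_s(k)\geq k+1$). Thus for any $m\in\NN$ and any pair of labels $(j,l)$, the set of $n$ whose presence alongside $m$ would violate the disjointness condition has at most $N^2 jl$ elements, namely the images $\phi_{s,t,i,i'}(m)$ over $s,t\leq N$, $i\leq j$, $i'\leq l$.

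Assuming $A_1,\dots,A_{j-1}$ have been constructed as subsets of $B_1,\dots,B_{j-1}$ with prescribed positive lower densities $\alpha_1,\dots,\alpha_{j-1}$, I would extract $A_j\subset B_j$ by removing from $B_j$ the \emph{forbidden set} $F_j$ of all $n$ such that $n=\phi_{s,t,i,i'}(m)$ or $m=\phi_{s,t,i,i'}(n)$ for some $m\in A_l$, $l<j$, and some quadruple $(s,t,i,i')$ with $i\leq l$, $i'\leq j$. Since each $\phi_{s,t,i,i'}$ is strictly increasing and injective, $\phi(A_l)$ has upper density bounded by $\alpha_l$ (trivially when $\phi$ is expansive, and in fact zero when $\phi$ grows super-linearly, i.e., when $f_s$ dominates $f_t$ in growth). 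Choosing the densities $(\alpha_l)$ to decay fast enough in $l$ (for instance $\alpha_l\leq\ldens(B_l)\cdot 2^{-l}$) and summing the $O(jl N^2)$ collision contributions, $F_j$ will have upper density strictly below $\ldens(B_j)$, so one can extract $A_j$ of positive lower density $\alpha_j$ from $B_j\setminus F_j$. A final greedy thinning à la Lemma~6.19 of \cite{BM09} handles the self-collisions within $A_j$ itself, costing only a vanishing fraction of elements by the same sparsity estimate.

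The main obstacle I anticipate is the regime where $f_s$ and $f_t$ have comparable growth rates, so that $\phi_{s,t,i,i'}$ is essentially linear and $\phi(A_l)$ retains density comparable to $\alpha_l$. This is exactly the setting treated in \cite{MMP22} for the pseudo-shift case $f_s(n)=n+q_s$, and my plan is to borrow their more delicate estimates there; the opposite regime (functions of very different growth) is easy, since $\phi(A_l)$ has upper density zero. The essential balance to maintain throughout the induction is that the exponentially decaying sequence $(\alpha_l)$ keeps the cumulative contribution of forbidden sets strictly under $\ldens(B_j)$ at every stage, allowing the construction to proceed for all $j$ simultaneously.
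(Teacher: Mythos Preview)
Your overall architecture---collision maps $\phi_{s,t,i,i'}$, an inductive construction of the $A_j$, and removal of a forbidden set of small upper density---is close in spirit to the paper's argument. But there is a genuine gap at the heart of the density estimate.

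The claim ``since each $\phi_{s,t,i,i'}$ is strictly increasing and injective, $\phi(A_l)$ has upper density bounded by $\alpha_l$'' is simply false when $\phi$ is \emph{contractive}, and you have the easy/hard dichotomy backwards. When the two growth rates are very different, one of the two collision maps $\phi$ and $\phi^{-1}$ is strongly contractive, and for such maps $\udens(\phi(A_l))$ can be arbitrarily large regardless of how sparse $A_l$ is. Concretely, with $f_s(n)=n+1$ and $f_t(n)=2n$ one has $\phi_{s,t,1,1}(m)=\log_2(m+1)$ on the sparse domain $\{2^k-1:k\in\NN\}$; taking $A_l$ equal to this domain (which has density zero) gives $\phi(A_l)=\NN$. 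So you cannot get the required bound for free: you must \emph{force} it by thinning the source set. This is exactly what the paper does via a preliminary lemma: given $B$ of positive lower density and an increasing $h$, one passes to $B'\subset B$ still of positive lower density with $\udens(h(B')\cap\NN)<\delta$, by keeping only every $K$-th element of $\{b\in B: h(b)\in\NN\}$.

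There is a second, related problem with your forward induction. At stage $j$ you must absorb roughly $N^2 j l$ collision maps for each $l<j$, so even granting your (incorrect) density bound the forbidden set has upper density of order $j\sum_{l<j} l\,\alpha_l$. This must stay below $\ldens(B_j)$ for every $j$, but $\ldens(B_j)$ is given and may decay arbitrarily fast while the factor $j$ grows without bound; no a priori choice of $(\alpha_l)$ can guarantee this. The paper sidesteps both issues by running the induction \emph{the other way}: at stage $j$ it thins each later $B_l$ (for $l>j$) so that the image of $B_l(j+1)$ under every map in $\mathcal F_{j,l}=\{F^{-1}\circ G: F\in\mathcal E_j,\ G\in\mathcal E_l\}$ has upper density below $\ldens(B_j(j))/4^l$, and then sets $A_j=B_j(j)\setminus\bigcup_{l>j}\bigcup_{h\in\mathcal F_{j,l}} h(B_l(j+1))$. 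The threshold is tied to the current $\ldens(B_j(j))$, and the geometric weights $4^{-l}$ make the removed set harmless. Self-collisions are handled afterwards by a separate greedy argument, much as you suggest.
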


The proof will be divided into several lemmas. 

\begin{lemma}
Let $B\subset\NN$ with $\ldens(B)>0$, let $h:\NN\to\mathbb R$ be increasing and let $\delta>0$. There exists $B'\subset B$ with $\ldens(B')>0$ such that $\udens(h(B')\cap\NN)<\delta$.
\end{lemma}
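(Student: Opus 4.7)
My approach is to retain every element of $B$ whose $h$-image is not an integer, and to thin by a factor $K>1/\delta$ those elements whose $h$-image is an integer. Fix an integer $K>1/\delta$ and set $T=\{b\in B:h(b)\in\mathbb{N}\}$. If $T$ is finite, then $h(B)\cap\mathbb{N}$ is finite, hence of upper density $0$, and $B'=B$ does the job. Otherwise, enumerate $T=\{t_1<t_2<\cdots\}$ and define
\[
B'=(B\setminus T)\cup\{t_{jK}:j\geq 1\}\subset B.
\]

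The upper density estimate is essentially free: since $h(B\setminus T)\cap\mathbb{N}=\varnothing$ by definition of $T$, one has $h(B')\cap\mathbb{N}=\{h(t_{jK}):j\geq 1\}$, a strictly increasing sequence of integers because $h$ is strictly increasing on $T$. Consequently, the number of $j$'s with $h(t_{jK})\leq N$ equals $\lfloor|\{i:h(t_i)\leq N\}|/K\rfloor$, and the inner cardinal is bounded by $N$ because the $h(t_i)$ are distinct positive integers in $[1,N]$. This yields $|h(B')\cap\mathbb{N}\cap[1,N]|\leq N/K$, hence $\udens(h(B')\cap\mathbb{N})\leq 1/K<\delta$.

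The lower density is obtained by counting
\[
|B'\cap[1,N]|=|(B\setminus T)\cap[1,N]|+\lfloor|T\cap[1,N]|/K\rfloor
\]
and using the trivial inequality, valid for $K\geq 1$,
\[
|(B\setminus T)\cap[1,N]|+\frac{|T\cap[1,N]|}{K}\geq\frac{|(B\setminus T)\cap[1,N]|+|T\cap[1,N]|}{K}=\frac{|B\cap[1,N]|}{K}.
\]
Dividing by $N$ and taking $\liminf$ gives $\ldens(B')\geq\ldens(B)/K>0$. The argument is short and presents no real obstacle; the only point worth highlighting is that one must keep $B\setminus T$ \emph{entirely} (rather than thinning it by the same factor $K$), as this is what makes the displayed inequality work and what keeps the estimate robust whether the lower density of $B$ is carried by $B\cap T$ or by $B\setminus T$.
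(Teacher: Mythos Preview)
Your proof is correct and follows essentially the same approach as the paper: split $B$ according to whether $h(b)$ is an integer, keep all of $B\setminus T$, and thin $T$ by a factor $K>1/\delta$. The only differences are expository---the paper phrases the lower-density bound as $b'_n\le b_{Kn}$ (proved by induction) and the upper-density bound as $c_n\ge Kn$, whereas you count directly---but the underlying argument is the same.
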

\begin{proof}
Let $(b_n)$ be an increasing enumeration of $B$. We set 
\begin{align*}
B_0&=\{b\in B:\ h(b)\notin \NN\}\\
B_1&=\{b\in B:\ h(b)\in \NN\} 
\end{align*}
and let $(b_{1,n})$ be the increasing enumeration of $B_1$. Observe that $h(b_{1,n})\geq n$ for all $n$.
Next we set $B'=B_0\cup\{b_{1,Kn}:\ n\in\NN\}$ for some large $K$. Then one shows by induction that, denoting $(b'_n)$ the increasing enumeration of $B'$, $b'_n\leq b_{Kn}$ for all $n\geq 1$. Moreover, 
$h(B')\cap\NN=\{h(b_{1,Kn}):\ n\in\NN\}=:\{c_n:\ n\geq 1\}$ where $(c_n)$ is increasing. 
Then $c_n\geq Kn$ so that $\udens(h(B')\cap\NN)\leq 1/K$. 
\end{proof}
By the inequality
$\udens(A\cup B)\leq \udens(A)+\udens(B)$, the lemma can be extended to a finite set of functions. 

\begin{lemma}\label{lem:wrp2}
Let $\mathcal F$ be a finite set of increasing functions from $\NN$ to $\RR$. Let $B\subset\NN$ with $\ldens(B)>0$ and let $\delta>0$. There exists $B'\subset\NN$ with
$\ldens(B')>0$ and $\udens(\bigcup_{h\in\mathcal F}h(B')\cap \NN)<\delta$ for all 
$h\in\mathcal F$.
\end{lemma}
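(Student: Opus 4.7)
The plan is to iterate the single-function version (the preceding lemma) once for each element of the finite family $\mathcal F$, peeling off subsets of $B$ one at a time while keeping the lower density positive.

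More concretely, enumerate $\mathcal F=\{h_1,\dots,h_k\}$ and set $\delta'=\delta/k$. I would first apply the preceding lemma to $B$ with the function $h_1$ and the threshold $\delta'$ to produce $B_1\subset B$ with $\ldens(B_1)>0$ and $\udens(h_1(B_1)\cap\NN)<\delta'$. Then apply the lemma again to $B_1$ with $h_2$ and $\delta'$ to produce $B_2\subset B_1$ with $\ldens(B_2)>0$ and $\udens(h_2(B_2)\cap\NN)<\delta'$, and so on, obtaining a decreasing chain $B\supset B_1\supset\cdots\supset B_k$ with $\ldens(B_k)>0$ and $\udens(h_i(B_i)\cap\NN)<\delta'$ for each $i$.

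The key observation that ties everything together is that since $B_k\subset B_i$ for every $i\leq k$, monotonicity of the image under an increasing function gives $h_i(B_k)\cap\NN\subset h_i(B_i)\cap\NN$, so $\udens(h_i(B_k)\cap\NN)<\delta'$ for all $i$. Then finite subadditivity of upper density yields
\[
\udens\Big(\bigcup_{h\in\mathcal F}h(B_k)\cap\NN\Big)\leq\sum_{i=1}^k\udens(h_i(B_k)\cap\NN)<k\cdot\frac{\delta}{k}=\delta,
\]
so $B':=B_k$ is the desired set.

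There is no real obstacle here: the content is fully in the preceding lemma, and the only things one needs to notice are that each iteration strictly shrinks the working set (so the bounds from earlier stages are automatically inherited by the image of the final set) and that upper density is finitely subadditive. The choice $\delta'=\delta/k$ is calibrated precisely so that the subadditivity bound closes.
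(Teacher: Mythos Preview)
Your proposal is correct and is essentially the argument the paper gives: the paper simply remarks that by the inequality $\udens(A\cup B)\leq\udens(A)+\udens(B)$ one extends the single-function lemma to a finite family, which is exactly your iterative construction followed by subadditivity. (A minor remark: the inclusion $h_i(B_k)\subset h_i(B_i)$ holds for any function once $B_k\subset B_i$; monotonicity of $h_i$ is not needed at that step.)
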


The next lemma is the main step to prove that \eqref{eq:wpr} holds true for $j\neq l$.
\begin{lemma}\label{lem:wrp3}
Let $(\mathcal E_j)_{j\geq 1}$ be a sequence of finite sets of increasing functions from $\NN$ to $\RR$ going to $+\infty$. Let $(B_j)_{j\geq 1}\subset\Ald$. Then there exists a sequence $(A_j)_{j\geq 1}\subset\Ald$ such that 
\begin{enumerate}[(a)]
\item $\forall j\geq 1,\ A_j\subset B_j$;
\item $\forall j,l\geq 1$, $j\neq l$, $\forall F\in \mathcal E_j$, $\forall G\in \mathcal E_l$, $\forall m\in A_j$, $\forall n\in A_l$, $F(m)\neq G(n)$.
\end{enumerate}
\end{lemma}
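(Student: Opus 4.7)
The plan is to build the sequence $(A_j)$ inductively. The key reformulation is that, for two increasing functions $F,G:\NN\to\RR$, the condition ``$F(m)\neq G(n)$ for all $m\in A_j$, $n\in A_l$'' is encoded by the partial increasing function $\psi_{G,F}:\NN\to\NN$ defined by $\psi_{G,F}(n)=m$ iff $G(n)=F(m)$; this $m$ is unique when it exists, by the injectivity of $F$. Under this rewriting, $F(A_j)\cap G(A_l)=\varnothing$ becomes $A_j\cap \psi_{G,F}(A_l)=\varnothing$, so $A_j$ simply has to avoid the image $\psi_{G,F}(A_l)\subset\NN$.

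The first step will be a pre-processing of the $B_l$'s. Choose positive reals $\delta_{l,F,G}$ indexed by triples with $l\geq 1$, $G\in\mathcal E_l$ and $F\in\bigcup_{j\neq l}\mathcal E_j$, whose total sum is as small as we need. For fixed $l$, the family $\{\psi_{G,F}\}_{G,F}$ is a countable collection of partial increasing maps $\NN\to\NN$; iterating Lemma \ref{lem:wrp2} and gluing the outputs by a diagonal construction analogous to the one in its own proof, we would extract $B_l'\subset B_l$ with $\ldens(B_l')>0$ such that $\udens(\psi_{G,F}(B_l')\cap\NN)<\delta_{l,F,G}$ for every admissible pair $(F,G)$.

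The induction is then essentially bookkeeping. Having built $A_1,\dots,A_{j-1}$ as subsets of $B_1',\dots,B_{j-1}'$ satisfying the disjointness requirement, we set
\[
A_j=B_j'\setminus\bigcup_{\substack{l<j,\\ F\in\mathcal E_j,\,G\in\mathcal E_l}}\psi_{G,F}(A_l).
\]
Each $\psi_{G,F}(A_l)\subset\psi_{G,F}(B_l')\cap\NN$ has upper density below $\delta_{l,F,G}$, so finite subadditivity of $\udens$ keeps the upper density of the subtracted set strictly less than $\ldens(B_j')$, whence $\ldens(A_j)>0$. By construction, $A_j\cap\psi_{G,F}(A_l)=\varnothing$ for every $l<j$ and every admissible $(F,G)$; together with the symmetric enforcement done at step $l$ whenever $l>j$, this yields condition (b).

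The delicate step will be the pre-processing, since a naive iteration of Lemma \ref{lem:wrp2} lets the lower density of the successively thinned sets tend to zero. The way around it should be to exploit the fact that the explicit construction used in the proof of Lemma \ref{lem:wrp2} (namely $B'=B_0\cup\{b_{1,Kn}:n\in\NN\}$) actually yields a \emph{uniform} control $|h(B')\cap\NN\cap[1,N]|\leq N/K$ valid for every $N$; this makes upper densities countably subadditive on the sets so produced and permits a diagonal selection in the spirit of \cite[Lemma 6.19]{BM09}. One additionally has to isolate the degenerate case where a single function belongs to both $\mathcal E_j$ and $\mathcal E_l$: then $\psi_{G,F}$ is the identity, $\udens(\psi_{G,F}(A_l))\geq\ldens(A_l)>0$ cannot be made small, and the remedy is to apply \cite[Lemma 6.19]{BM09} beforehand to replace the $B_l$'s by pairwise disjoint subsets of positive lower density, which automatically forces $A_j\cap A_l=\varnothing$.
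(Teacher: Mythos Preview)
Your high-level idea matches the paper's: encode the separation via the partial maps $F^{-1}\circ G$ and use Lemma~\ref{lem:wrp2} to make the upper densities of their images small, then subtract. The organization you propose, however, has a genuine gap.

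The pre-processing step is not just ``delicate''---it is impossible in general. You ask for $B_l'\subset B_l$ with $\ldens(B_l')>0$ and $\udens(\psi_{G,F}(B_l'))<\delta_{l,F,G}$ for \emph{all} $G\in\mathcal E_l$ and $F\in\bigcup_{j\neq l}\mathcal E_j$, with $\sum\delta_{l,F,G}<\infty$. Take $\mathcal E_j=\{n\mapsto n+j\}$: then for fixed $l$ the maps $\psi_{G,F}$ are exactly the shifts $n\mapsto n+(l-j)$, $j\neq l$, and each of them preserves density, so $\udens(\psi_{G,F}(B_l'))=\udens(B_l')$ for every $j$. Forcing this below $\delta_{l,F,G}$ for all $j$ means $\udens(B_l')\leq\inf_j\delta_{l,F,G}=0$, contradicting $\ldens(B_l')>0$. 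Your diagonal/uniform-bound fix cannot rescue this, because the obstruction is not about convergence of a procedure but about the non-existence of the target object. (Incidentally, your degenerate-case worry is a red herring: Lemma~\ref{lem:wrp2} handles $h=\mathrm{id}$ perfectly well, producing $B'$ with both densities small but positive.)

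The paper avoids the whole difficulty by reversing the order of quantifiers. Instead of thinning $B_l$ once for all $j$, it proceeds by induction on $j$: having fixed $B_j(j)$ and hence knowing $\ldens(B_j(j))$, it thins each $B_l$ with $l>j$ \emph{once more}, via Lemma~\ref{lem:wrp2} applied to the \emph{finite} family $\mathcal F_{j,l}=\{F^{-1}\circ G:F\in\mathcal E_j,\ G\in\mathcal E_l\}$, so that $\udens\big(\bigcup_{h\in\mathcal F_{j,l}}h(B_l(j+1))\cap\NN\big)<\ldens(B_j(j))/4^l$. Thus $B_l$ is thinned exactly $l-1$ times before being frozen as $B_l(l)$, and the target at each step is already determined---there is no circularity. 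One then sets
\[
A_j=B_j(j)\setminus\bigcup_{l>j}\bigcup_{h\in\mathcal F_{j,l}}h(B_l(j+1))\cap\NN,
\]
and since the subtracted set has upper density at most $\ldens(B_j(j))\sum_{l>j}4^{-l}<\ldens(B_j(j))$, one gets $A_j\in\Ald$. The separation for $j<l$ follows because $A_l\subset B_l(l)\subset B_l(j+1)$ and $A_j$ avoids $h(B_l(j+1))$ for every $h\in\mathcal F_{j,l}$. Reorganizing your argument along these triangular lines fixes it completely.
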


\begin{proof}
Extending affinely each $F\in \bigcup_j \mathcal E_j$ between two consecutive integers, 
and on the left of $0$, we may assume that it defines a bijection $F:\mathbb R\to\mathbb R$. 
We shall denote by $\mathcal F_{j,l}=\{F^{-1}\circ G:\ \NN\to\RR,\ F\in \mathcal E_j,\ G\in\mathcal E_l\}$.
We will construct the sequence $(A_j)_{j\geq 1}$ inductively. For notational convenience, 
define $B_l(1)=B_l$ for $l\geq 1$. For $l\geq 2$, we apply Lemma \ref{lem:wrp2} to $B=B_l(1)$ and to $\mathcal F=\mathcal F_{1,l}$ to construct a set $B_l(2)$
such that 
\begin{itemize}
\item $B_l(2)\subset B_l(1)$;
\item $\ldens(B_l(2))>0$;
\item $\udens(\bigcup_{h\in\mathcal F_{1,l}}h(B_l(2))\cap\NN)<\frac{\ldens(B_{1}(1))}{4^l}$. 
\end{itemize}
By induction on $j\geq 1$, we construct sets $B_l(j)$, $l\geq j$, such that, for all $l\geq j+1$, 
\begin{itemize}
\item $B_l(j+1)\subset B_l(j)$;
\item $\ldens (B_l(j+1))>0$;
\item $\udens(\bigcup_{h\in \mathcal F_{j,l}}h(B_l(j+1))\cap \NN)<\frac{\ldens(B_j(j))}{4^l}.$
\end{itemize}
We finally set for $j\geq 1$ $A_j=B_j(j)\backslash \bigcup_{l\geq j+1}\bigcup_{h\in \mathcal F_{j,l}}h(B_l(j+1))\cap\NN$ which has positive lower density. Now let $j<l$, $F\in\mathcal E_j$ and $G\in\mathcal E_l$, $m\in A_j$, $n\in A_l$. The equality $F(m)=G(n)$ would imply that 
$m\in A_j\cap h(A_l)\cap\NN$ for $h=F^{-1}\circ G\in \mathcal F_{j,l}$. But this is impossible since 
$A_l\subset B_l(l)\subset B_l(j+1)$ and $A_j\cap h(B_l(j+1))=\varnothing$.
\end{proof}

We now handle the case $j=l$ with the following lemma.
\begin{lemma}\label{lem:wrp4}
Let $A\subset\NN$ with $\ldens(A)>0$. Let $f:\NN\to\RR$ be one-to-one. There exists $A'\subset A$ with $\ldens(A')>0$ such that, for all $n,m\in A'$, $n\neq m\implies n\neq f(m)$. 
\end{lemma}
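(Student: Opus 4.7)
The plan is to reformulate the desired conclusion as an ``independent set'' problem in a graph of maximum degree at most $2$ and to solve it by a greedy algorithm whose output is controlled by a charging argument.

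To begin, introduce the undirected graph $G$ with vertex set $A$ and edge set $\bigl\{\{m,n\}:m,n\in A,\ m\neq n,\ n=f(m)\text{ or }m=f(n)\bigr\}$; the required property is exactly that $A'$ is independent in $G$. Because $f$ is one-to-one, each $m\in A$ has at most two neighbours in $G$, namely $f(m)$ (when it lies in $A\setminus\{m\}$) and the unique preimage $f^{-1}(m)$ (when it exists and lies in $A\setminus\{m\}$), so $G$ has maximum degree at most $2$.

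Next, enumerate $A=\{a_1<a_2<\cdots\}$ in increasing order and build $A'$ greedily: place $a_k$ in $A'$ iff no $a_j$ with $j<k$ already in $A'$ is a neighbour of $a_k$ in $G$. By construction $A'$ is independent in $G$, which gives the required property. To bound its density from below, charge each rejected $a_k\in A\setminus A'$ to some smaller $a_j\in A'$ adjacent to $a_k$ in $G$ (one such $a_j$ exists by the greedy rule). Each $m\in A'$ has at most two neighbours in $G$ and therefore receives at most two charges, so $|(A\setminus A')\cap[1,N]|\leq 2\,|A'\cap[1,N]|$ for every $N\geq 1$. This yields $|A'\cap[1,N]|\geq \tfrac{1}{3}|A\cap[1,N]|$ and hence $\ldens(A')\geq \tfrac{1}{3}\ldens(A)>0$.

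The main conceptual subtlety, and the reason for the charging step, is that a naive $3$-colouring of $G$ followed by pigeonhole on the three colour classes does not work: lower density is not preserved under pigeonholing a finite partition, since the densest class at scale $N$ can fluctuate with $N$. The uniform-in-$N$ bound supplied by the greedy/charging argument is exactly what is needed to pass from counting in finite windows to a lower-density estimate.
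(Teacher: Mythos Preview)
Your proof is correct and follows essentially the same approach as the paper: both greedily scan $A$ in increasing order, keep $a_k$ unless it conflicts (via $f$ or $f^{-1}$) with a previously kept element, and then argue by counting that the retained set has at least a fixed fraction of the elements of $A$ in every initial segment. The paper phrases this via the index recursion $n_{k+1}=\inf\{n:a_n\notin f(\{a_1,\dots,a_{n_k}\})\cup f^{-1}(\{a_1,\dots,a_{n_k}\})\}$ and the bound $n_{k+1}\le 2k+1$; your graph-theoretic reformulation and charging argument giving $\ldens(A')\ge\tfrac13\ldens(A)$ is a clean and arguably more transparent variant of the same idea.
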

\begin{proof}
Let $(a_n)$ be an increasing enumeration of $A$. We define by induction a sequence $(n_k)$ with $n_1=1$ and 
$$n_{k+1}=\inf\big\{n:\ a_n\notin f(\{a_1,\dots,a_{n_k}\})\cup f^{-1}(\{a_1,\dots,a_{n_k}\})\big\}.$$
Then $n_{k+1}\leq 2k+1$ so that, setting $A'=\{a_{n_k}:\ k\geq 1\}$, $\ldens(A')>0$.
Now for $n\neq m\in A'$, write $n=a_{n_k}$ and $m=a_{n_j}$ with $k\neq j$. If $k<j$, then one knows that $a_{n_j}\notin f^{-1}(\{a_{n_k}\})$ so that $f(m)\neq n$. If $k>j$, one knows directly that $n=a_{n_k}\neq f(a_{n_j})=m.$
\end{proof}

By successive extractions, we can extend the previous lemma to $f$ belonging to a finite set of functions $\mathcal F$. Applying this to the sets $(A_l)$ constructed in Lemma \ref{lem:wrp3} with $\mathcal F=\{F^{-1}\circ G:\ F,G\in\mathcal E_l\}$, we finally get 
\begin{lemma}\label{lem:wrp5}
Let $(\mathcal E_j)_{j\geq 1}$ be a sequence of finite sets of increasing functions from $\NN$ to $\RR$ going to $+\infty$. Let $(B_j)_{j\geq 1}\subset\Ald$. Then there exists a sequence $(A_j)_{j\geq 1}\subset\Ald$ such that 
\begin{enumerate}[(a)]
\item $\forall j\geq 1,\ A_j\subset B_j$;
\item $\forall j, l\geq 1$, $\forall F\in \mathcal E_j$, $\forall G\in \mathcal E_l$, $\forall m\in A_j$, $\forall n\in A_l$, $m\neq n\implies F(m)\neq G(n)$.
\end{enumerate}
\end{lemma}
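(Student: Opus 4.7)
The plan is a clean two-step argument: Lemma \ref{lem:wrp3} already handles the case $j\neq l$, so it suffices, for each fixed $j$, to refine the output $A'_j$ of that lemma into a subset $A_j$ which additionally enforces condition (b) in the diagonal case $j=l$. Since the cross-index condition in (b) is inherited under taking subsets, the two steps do not interfere, and the first can be invoked verbatim.

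For the diagonal case, fix $j\geq 1$ and extend every $F\in\mathcal E_j$ to a bijection of $\RR$ by affine interpolation, exactly as in the proof of Lemma \ref{lem:wrp3}. The finite family $\mathcal F_j=\{F^{-1}\circ G:\ F,G\in\mathcal E_j,\ F\neq G\}$ then consists of one-to-one maps $\NN\to\RR$. Enumerating $\mathcal F_j=\{h_1,\dots,h_r\}$, I would apply Lemma \ref{lem:wrp4} iteratively: first to $A'_j$ with $f=h_1$, then to the resulting set with $f=h_2$, and so on. The property ``$m,n\in A$, $m\neq n\Longrightarrow m\neq f(n)$'' is clearly inherited by any subset of $A$, so at the end of the iteration the set $A_j\subset A'_j$ lies in $\Ald$ and satisfies this property simultaneously for every $h\in\mathcal F_j$.

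It then remains to translate back. For $F,G\in\mathcal E_j$ and distinct $m,n\in A_j$, the case $F=G$ gives $F(m)\neq G(n)$ by injectivity of $F$, while if $F\neq G$ then $h:=F^{-1}\circ G\in\mathcal F_j$, and $F(m)=G(n)$ would read $m=h(n)$ with $m\neq n$, contradicting our choice of $A_j$. This delivers conclusion (b) for $j=l$, while conclusion (a) and (b) for $j\neq l$ are preserved because $A_j\subset A'_j\subset B_j$. The only conceivable obstacle is whether the diagonal refinement could damage the cross-index property from Lemma \ref{lem:wrp3}; as just noted, this does not happen because the refinement consists only of taking subsets and the property is monotone. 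Hence no genuine difficulty arises, and the statement follows by a routine splicing of Lemmas \ref{lem:wrp3} and \ref{lem:wrp4}.
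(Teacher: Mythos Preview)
Your argument is correct and matches the paper's own proof essentially line for line: the paper first applies Lemma~\ref{lem:wrp3} to handle $j\neq l$, then refines each resulting set by successive applications of Lemma~\ref{lem:wrp4} to the finite family $\{F^{-1}\circ G:\ F,G\in\mathcal E_l\}$, exactly as you describe. Your write-up is in fact more detailed than the paper's one-sentence sketch, and your observation that the cross-index property is monotone under passing to subsets is precisely the point that makes the splicing work.
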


\begin{proof}[Proof of Theorem \ref{thm:wrp}]
Let $j\geq 1$. Define, for $p\in\{1,\dots,j\}$ and $s\in\{1,\dots,N\}$, 
\begin{align*}
F_{p,s}:\mathbb N&\to\NN\\
m&\mapsto f_s^m(p)
\end{align*}
and let $\mathcal E_j=\left\{F_{p,s}:\ p\in\{1,\dots,j\},\ s\in\{1,\dots,N\}\right\}$. We apply Lemma \ref{lem:wrp5} to $(\mathcal E_j)$ and $(B_j)$.
We get a sequence of sets $(A_j)$ with positive lower density and $A_j\subset B_j$. Pick $j,l\geq 1$, $m\in A_j$, $n\in A_l$, and let $p\in\{1,\dots,j\}$, $q\in\{1,\dots,l\}$. Then
$$f_s^m(p)=F_{p,s}(m)\textrm{ whereas }f_t^n(q)=F_{q,t}(n)$$
and these two values are different.
\end{proof}

Theorem \ref{thm:wrp} remains valid if we replace $\Ald$ by  $\Aud$. The only change in the above proof is that we have to replace $\ldens(A\backslash B)\geq \ldens(A)-\udens(B)$ by $\udens(A\backslash B)\geq \udens(A)-\udens(B)$.
Nevertheless, this is not very useful since, for disjoint upper frequent hypercyclicity and disjoint reiterative hypercyclicity, simpler characterizations for pseudo-shifts are given in \cite{MMP22}.
These characterizations need a weaker notion than that of Definition \ref{def:wrp}.

\begin{definition}\label{def:arp}
Let $f_s:\mathbb N\to\mathbb N$ be increasing maps, $s=1,\dots,N$ and let $\mathcal A$ be a family of subsets of $\mathbb N$. The family $\mathcal A$ is called $(f_s)_{s=1}^N-$\emph{almost partition regular} is for any $l\geq 1$, any $B\in\mathcal A$,  there exists $A\subset B$, $A\in\mathcal A$ such that 
\begin{equation*}
f_s^m(\{1,\dots,l\})\cap f_t^n(\{1,\dots,l\})=\varnothing
\end{equation*}
$\textrm{ for all }m,n\in A,\ m\neq n, \textrm{ for all }s,t\in\{1,\dots,N\}.$
\end{definition}

This notion is weaker than being $(f_s)_{s=1}^N-$weakly partition regular. Therefore, the set $\Aud$ of subsets of $\NN$ with positive upper density is $(f_s)_{s=1}^N$-almost partition regular for any finite sequence $(f_s)_{s=1,\dots,N}$ of increasing maps $\mathbb N\to\mathbb N$. To also delete the assumption $\Aubd$ is $(f_s)$-almost partition regular
in Theorem 3.5 of \cite{MMP22}, we need the following result whose proof follows from a suitable modification of Lemma \ref{lem:wrp4}.
The only change is to observe that if $\{n_k:\ k\geq 1\}$ has positive upper Banach density and if $(p_k)_{k\geq 1}$ is such that $p_k\leq Ck$ for some $C>0$ and all $k\geq 1$,
then $\{n_{p_k}:\ k\geq 1\}$ has positive upper Banach density.

\begin{proposition}
Given any finite sequence $(f_s)_{s=1,\dots,N}$ of increasing maps $\mathbb N\to\mathbb N$, $\Aubd$ is $(f_s)$-almost partition regular.
\end{proposition}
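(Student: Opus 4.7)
The plan is to adapt the construction of Lemma \ref{lem:wrp4} to the upper Banach density setting, with the observation stated just before the proposition as the only new ingredient. Given $l \geq 1$ and $B \in \Aubd$, I work with the finite family
$$\mathcal{E} = \{F_{p,s} : 1 \leq p \leq l,\ 1 \leq s \leq N\},\quad F_{p,s}(m) = f_s^m(p),$$
of strictly increasing maps $\mathbb{N}\to\mathbb{N}$, extended to increasing bijections of $\mathbb{R}$ by affine interpolation as in the proof of Lemma \ref{lem:wrp3}, together with the derived finite family $\mathcal{F} = \{F^{-1}\circ G : F,G \in \mathcal{E}\}$. The target partition regularity condition on $A$ is equivalent to requiring $m \neq h(n)$ for every $h\in\mathcal{F}$ and every $m \neq n$ in $A$.

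Enumerate $B$ increasingly as $(b_k)_{k\geq 1}$. I build an increasing sequence of indices $(n_k)$ by a greedy ``pick and remove neighbors'' scheme: set $n_1 = 1$, and at each step let $n_{k+1}$ be the smallest index not yet marked used, after marking as used the new pick together with every index $n'$ such that $b_{n'}$ equals $h(b_{n_{k+1}})$ or $h^{-1}(b_{n_{k+1}})$ for some $h\in\mathcal{F}$. Each step adds at most $2|\mathcal{F}|+1$ indices to the used set, yielding the linear bound $n_k \leq Ck$ with $C = 2|\mathcal{F}|+1$. By construction, no pick $b_{n_j}$ is an $h$-neighbor of any earlier pick $b_{n_i}$, so $F(b_{n_i}) \neq G(b_{n_j})$ for all $F,G\in\mathcal{E}$ and all $i \neq j$, and $A := \{b_{n_k} : k\geq 1\}$ satisfies the required partition regularity property.

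It remains to verify that $A \in \Aubd$, which is precisely where the highlighted observation enters: since $(b_k)$ has positive upper Banach density and $n_k \leq Ck$, the subsequence $A = \{b_{n_k}\}$ inherits positive upper Banach density. This step plays the role of the linear-bound computation at the end of Lemma \ref{lem:wrp4}, which there preserved positive lower density.

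I expect the main obstacle to be the observation itself. The naive estimate $|\{j : p_j \in [k_0+1, k_0+r]\}| \geq r/C - k_0(1 - 1/C) - 1$ coming from $p_j \leq Cj$ is only useful when $r$ is comparable to $k_0$, but positive upper Banach density of $(b_k)$ typically concentrates in long windows $I$ appearing arbitrarily deep in the enumeration of $B$, where $r = |B \cap I|$ can be much smaller than the number $k_0$ of preceding indices of $B$. A more refined argument is therefore required to guarantee that a positive fraction of any such dense window of $B$ survives in the subsequence $\{b_{p_k}\}$, exploiting that $(p_k)$ satisfying $p_k \leq Ck$ cannot skip arbitrarily long ranges of indices relative to the number of picks made so far.
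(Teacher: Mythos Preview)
Your proposal follows the paper's own sketch exactly: run the greedy construction of Lemma~\ref{lem:wrp4} on $B$ and then appeal to the observation that passing to a subsequence along indices $(p_k)$ with $p_k\le Ck$ preserves positive upper Banach density. You are right to flag this observation as the crux, and your suspicion is justified: the observation is \emph{false} as stated. Take $B=\{10^k:k\ge1\}\cup\bigcup_{j\ge1}[10^{j^2}+1,10^{j^2}+j]$, which has upper Banach density $1$, and let $(p_k)$ enumerate the indices in $B$ of the points $10^k$. One checks that $p_k=k+\sum_{i^2<k}i\le 2k$ for every $k$, yet $\{b_{p_k}\}=\{10^k\}$ has upper Banach density $0$. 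Your closing heuristic---that $(p_k)$ cannot skip index ranges long compared to the number of picks already made---is precisely what this example exploits: the sparse part $\{10^k\}$ accumulates enough picks to bankroll skipping each dense block $[10^{j^2}+1,10^{j^2}+j]$ entirely. So both your proposal and the paper's one-line sketch share the same genuine gap.

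The proposition is nonetheless true, but the argument should be \emph{localized} rather than global. Choose windows $W_j$ with $|W_j|\to\infty$ and $|B\cap W_j|\ge(d/2)|W_j|$; run the greedy selection inside each $B\cap W_j$ to obtain $A_j^0\subset B\cap W_j$ with the separation property for pairs in $A_j^0$ and $|A_j^0|\ge|B\cap W_j|/C$; then discard from $A_j^0$ the at most $2|\mathcal F|\cdot|A_1\cup\cdots\cup A_{j-1}|$ elements that violate separation against earlier $A_i$. Choosing $|W_j|$ large relative to $\sum_{i<j}|W_i|$ makes this loss negligible, so $A=\bigcup_j A_j\subset B$ satisfies the required separation and has $|A\cap W_j|\gtrsim|W_j|$, hence positive upper Banach density.
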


Lemma \ref{lem:wrp5} allows us to obtain a far reaching extension of \cite[Lemma 6.29]{BM09} and of \cite[Lemma 2.2]{MMP22}.
This extension will be used later on in this paper and has potentially other applications for the study of frequently hypercyclic operators.

\begin{theorem}\label{thm:sets}
 Let $\mathcal F$ be a finite set of increasing functions from $\NN$ to $\RR$ going to $+\infty$.
 Let $(B_k)_{k\geq 1}\subset\Ald$ and let $(N_k)_{k\geq 1}$ be a sequence of positive integers. Then
 there exists a sequence $(A_k)_{k\geq 1}\subset\Ald$ such that
 \begin{enumerate}[(a)]
  \item $\forall k\geq 1$, $A_k\subset B_k$, $\min(A_k)\geq N_k$;
  \item $\forall k,l\geq 1$, $\forall (n,m)\in A_k\times A_l$, $\forall (F,G)\in\mathcal F^2$, 
  $n\neq m\implies |F(n)-G(m)|\geq N_k+N_l.$
 \end{enumerate}
\end{theorem}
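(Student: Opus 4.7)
The plan is to deduce Theorem \ref{thm:sets} directly from Lemma \ref{lem:wrp5} by enlarging the family $\mathcal F$ into a $k$-dependent finite family $\mathcal E_k$ that already encodes the separation margin $N_k+N_l$. The point is that the desired inequality $|F(n)-G(m)|\geq N_k+N_l$ fails exactly when $F(n)-G(m)=c$ for some integer $c$ with $|c|\leq N_k+N_l-1$, and such a $c$ can be split as $c=c_1+c_2$ with $|c_1|\leq N_k$ and $|c_2|\leq N_l$. Rewriting the equality as $(F-c_1)(n)=(G+c_2)(m)$ turns the forbidden collisions into equalities between functions in a suitably translated family, which is precisely what Lemma \ref{lem:wrp5} is designed to prevent.

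Concretely, for each $k\geq 1$ I would put
$$\mathcal E_k=\bigl\{F+c:\ F\in\mathcal F,\ c\in\mathbb Z,\ |c|\leq N_k\bigr\},$$
which is a finite set of increasing functions $\NN\to\RR$ going to $+\infty$. Replacing $B_k$ by $B_k':=B_k\cap[N_k,+\infty)$ does not change its lower density, so Lemma \ref{lem:wrp5} applied to $(\mathcal E_k)$ and $(B_k')$ produces a sequence $(A_k)\subset\Ald$ with $A_k\subset B_k'$ and with $F'(n)\neq G'(m)$ whenever $F'\in\mathcal E_k$, $G'\in\mathcal E_l$, $n\in A_k$, $m\in A_l$ and $n\neq m$. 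Condition (a) is then automatic from $A_k\subset B_k'$.

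For condition (b), fix $k,l\geq 1$, $F,G\in\mathcal F$, and $n\in A_k$, $m\in A_l$ with $n\neq m$, and assume toward a contradiction that $|F(n)-G(m)|<N_k+N_l$. Let $c=F(n)-G(m)\in\mathbb Z$. Since $|c|\leq N_k+N_l-1$, I can write $c=c_1+c_2$ with $|c_1|\leq N_k$ and $|c_2|\leq N_l$; for instance, if $c\geq 0$ take $c_1=\min(c,N_k)$ and $c_2=c-c_1$, and symmetrically for $c<0$. Then $(F-c_1)(n)=(G+c_2)(m)$ with $F-c_1\in\mathcal E_k$ and $G+c_2\in\mathcal E_l$, contradicting the conclusion of Lemma \ref{lem:wrp5}.

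I do not expect any serious obstacle: the only genuine idea is the translate-decomposition trick of the first paragraph, and the rest is routine verification. The two points to watch are that $\mathcal E_k$ must remain finite (which it does because $N_k$ is a fixed integer) and that the decomposition of $c$ fits inside the allowed ranges for $c_1$ and $c_2$, which is exactly the case thanks to the bound $|c|\leq N_k+N_l-1$.
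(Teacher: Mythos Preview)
Your construction is exactly the paper's: the translated families $\mathcal E_k=\{F+j:F\in\mathcal F,\ j\in\ZZ,\ |j|\le N_k\}$, the truncation $B_k'=B_k\cap[N_k,+\infty)$, and the appeal to Lemma~\ref{lem:wrp5} all match; you have simply written out the verification that the paper compresses into ``conclude as in the proof of Theorem~\ref{thm:wrp}''.

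There is, however, a slip in that verification. You write ``$c=F(n)-G(m)\in\ZZ$'', but the functions in $\mathcal F$ are only assumed to map $\NN$ into $\RR$, so $c$ need not be an integer and your splitting $c=c_1+c_2$ with $c_1,c_2\in\ZZ$ is then unavailable. With integer translates, the conclusion of Lemma~\ref{lem:wrp5} only rules out the \emph{integer} values of $F(n)-G(m)$ lying in $[-(N_k+N_l),N_k+N_l]$, which by itself does not force $|F(n)-G(m)|\ge N_k+N_l$ when $F(n)-G(m)$ can be non-integral. The paper's one-line proof uses the very same $\mathcal E_k$ and so hides the same point; and the applications downstream (e.g.\ Corollary~\ref{cor:setscoholpar} with $\phi_i(n)=a_in+b_i\log n$) do involve genuinely irrational values, so this is not a vacuous concern. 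The gap is repairable---for instance by passing to integer-valued surrogates for the $F$'s and enlarging each $N_k$ by~$1$, taking care that the surrogates remain strictly increasing as Lemma~\ref{lem:wrp5} requires---but your write-up should address it rather than assert $c\in\ZZ$.
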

\begin{proof}
 For $f\in\mathcal F$, $j\in\ZZ$, define $f_j(n)=f(n)+j$ and set $\mathcal E_k=\{f_j:\ f\in\mathcal F,\ |j|\leq N_k\}$. 
 We apply Lemma \ref{lem:wrp5} to $(\mathcal E_k)_k$ and $B'_k$ with $B'_k=B_k\cap[N_k;+\infty)$
 and we conclude as in the proof of Theorem \ref{thm:wrp}.
\end{proof}

This theorem will be used in the following form. 

\begin{corollary}\label{cor:sets}
 Let $\phi_1,\phi_2:\NN\to\RR$ be two increasing functions tending to $+\infty$ and such that $\phi_2-\phi_1$ tends to $+\infty$. 
 Let $(N_k)_k$ be a sequence of positive integers and let $(B_k)\subset\Ald$. There exists $(A_k)\subset \Ald$ such that, for all $k,l\geq 1$, $\min(A_k)\geq N_k$,
 $A_k\subset B_k$ 
 and for every $(n,m)\in A_k\times A_l$, for every $(i,j)\in\{1,2\}^2$, 
 \begin{equation}\label{eq:corsets1}
 (i,n)\neq (j,m)\implies |\phi_i(n)-\phi_j(m)|\geq N_k+N_l.
 \end{equation}
\end{corollary}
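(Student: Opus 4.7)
The plan is to reduce to Theorem \ref{thm:sets} applied to $\mathcal F=\{\phi_1,\phi_2\}$ and to handle separately the diagonal case $n=m$, which Theorem \ref{thm:sets} leaves untouched but which the Corollary still needs when $i\neq j$.

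First, I would use the hypothesis $\phi_2-\phi_1\to+\infty$ to fatten the sequence $(N_k)$. For each $k\geq 1$, choose an integer $N'_k\geq N_k$ with the property that $|\phi_2(n)-\phi_1(n)|\geq 2N_k$ for every $n\geq N'_k$. Such a choice is possible because the difference $\phi_2-\phi_1$ tends to $+\infty$, so only finitely many values of $n$ can violate the inequality.

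Next, I would apply Theorem \ref{thm:sets} to the family $\mathcal F=\{\phi_1,\phi_2\}$, to the sequence $(B_k)$ and to the sequence $(N'_k)$. This yields sets $(A_k)\subset \Ald$ with $A_k\subset B_k$, $\min(A_k)\geq N'_k\geq N_k$, and satisfying, for every $k,l\geq 1$, every $(n,m)\in A_k\times A_l$ with $n\neq m$ and every $(F,G)\in\mathcal F^2$,
\[
 |F(n)-G(m)|\geq N'_k+N'_l\geq N_k+N_l.
\]
Because each pair $(i,j)\in\{1,2\}^2$ corresponds to $(\phi_i,\phi_j)\in\mathcal F^2$, condition \eqref{eq:corsets1} is immediate for all cases in which $n\neq m$.

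It remains to check \eqref{eq:corsets1} on the diagonal $n=m$, where $(i,n)\neq(j,m)$ forces $i\neq j$. If $n\in A_k\cap A_l$ then $n\geq N'_k$ and $n\geq N'_l$, so by the defining property of $(N'_k)$,
\[
 |\phi_2(n)-\phi_1(n)|\geq 2\max(N_k,N_l)\geq N_k+N_l,
\]
which is the required inequality. There is no real obstacle in this argument; the entire content of the Corollary beyond Theorem \ref{thm:sets} is captured by this inflation of $(N_k)$ to $(N'_k)$, whose sole purpose is to absorb the diagonal case using the quantitative separation of $\phi_1$ and $\phi_2$ at infinity.
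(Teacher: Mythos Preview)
Your proof is correct and follows essentially the same route as the paper: inflate each $N_k$ to some $M_k$ (your $N'_k$) large enough that $n\geq M_k$ forces $\phi_2(n)-\phi_1(n)\geq 2N_k$, then apply Theorem~\ref{thm:sets} with $\mathcal F=\{\phi_1,\phi_2\}$ and this enlarged sequence, leaving only the diagonal case $n=m$, $i\neq j$ to be handled by the choice of $M_k$. The paper additionally restricts $B_k$ to $[M_k,+\infty)\cap B_k$ before invoking Theorem~\ref{thm:sets}, but this is redundant since the theorem already guarantees $\min(A_k)\geq M_k$; your version is marginally cleaner.
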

\begin{proof}
 For all $k\geq 1$, let $M_k\geq N_k$ be such that $n\geq M_k$ implies $\phi_2(n)-\phi_1(n)\geq 2N_k$. We apply Theorem \ref{thm:sets}
 to $\mathcal F=\{\phi_1,\phi_2\}$, $B'_k=[M_k;+\infty)\cap B_k$ and $(M_k)$. The result follows immediately.
\end{proof}

Let us show an application of this corollary which will be used in Section \ref{sec:cohol}.
\begin{corollary}\label{cor:setscoholpar}
 Let $(a_i,b_i)\in\CC^*\times\CC$, $i=1,2$ with $(a_1,b_1)\neq (a_2,b_2)$. Let $(N_k)$ be a sequence
 of integers. There exists $(A_k)\subset\Ald$ satisfying, for all $k,l\geq 1$, $\min(A_k)\geq N_k$ and,
 for every $(n,m)\in A_k\times A_l$, for every $(i,j)\in\{1,2\}^2$, 
 \begin{eqnarray}
 \label{eq:corsetscoholpar1}
  (i,n)\neq (j,m)&\implies&\big|a_i n+b_i\log n-a_j n-b_j\log n\big|\geq N_k+N_l. 
 \end{eqnarray}
\end{corollary}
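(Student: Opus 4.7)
Write $\psi_i(n):=a_in+b_i\log n$ and note that the inequality displayed in \eqref{eq:corsetscoholpar1} should naturally read $|\psi_i(n)-\psi_j(m)|\geq N_k+N_l$ (the last two occurrences of $n$ are typos for $m$, as otherwise the expression is independent of $m$). The plan is to reduce this complex-valued bound to the real-valued Corollary \ref{cor:sets} by composing $\psi_i$ with a suitable real-linear functional $L:\CC\to\RR$ of operator norm at most $1$; since $|L(z)|\leq|z|$ for all $z\in\CC$, any lower bound on $|L(\psi_i(n))-L(\psi_j(m))|$ will transfer to $|\psi_i(n)-\psi_j(m)|$. The analysis splits into two cases according to the relative position of $a_1,a_2$ in $\CC\simeq\RR^2$.

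\textbf{Generic case: $a_2$ is not a strictly negative real multiple of $a_1$.} Viewing real-linear forms on $\CC$ as elements of $(\RR^2)^*$, the set $\{L:L(a_1)>0,\ L(a_2)>0\}$ is a nonempty open cone. Since $(a_1,b_1)\neq(a_2,b_2)$, the locus $\{L:L(a_1-a_2)=L(b_1-b_2)=0\}$ is a proper linear subspace, so I can pick $L$ in the cone avoiding this locus; after possibly swapping the roles of indices $1$ and $2$ (the statement is symmetric in them), I can further arrange $L(\psi_2-\psi_1)\to+\infty$, and I normalize $\|L\|\leq 1$. Then $\phi_i:=L\circ\psi_i=L(a_i)n+L(b_i)\log n$ is eventually strictly increasing, tends to $+\infty$, and $\phi_2-\phi_1\to+\infty$. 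After modifying $\phi_1,\phi_2$ on a finite initial segment to make them globally strictly increasing, I apply Corollary \ref{cor:sets} with $(B_k):=(\NN)$ and the enlarged sequence $(N'_k):=(\max(N_k,M_0))$, where $M_0$ is beyond the modified segment. The output $(A_k)\subset\Ald$ satisfies $\min(A_k)\geq N_k$ and $|\phi_i(n)-\phi_j(m)|\geq N_k+N_l$ for $(i,n)\neq(j,m)$; by the norm bound on $L$, $|\psi_i(n)-\psi_j(m)|\geq N_k+N_l$.

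\textbf{Degenerate case: $a_2=-\lambda a_1$ with $\lambda>0$.} Here no $L$ satisfies $L(a_1),L(a_2)>0$ simultaneously, so I resort to direct modulus estimates. The logarithmic terms are dominated by the linear ones, and for $n,m\geq M_0$ with $M_0$ sufficiently large (depending only on $a_i,b_i$),
$$|\psi_i(n)-\psi_i(m)|\geq\frac{|a_i|}{2}|n-m|\ (n\neq m),\qquad |\psi_1(n)-\psi_2(m)|\geq\frac{|a_1|}{2}(n+\lambda m),$$
and symmetrically for $|\psi_2(n)-\psi_1(m)|$. I fix a constant $C$ with $C\min(|a_1|,|a_2|)\geq 4$ and apply Theorem \ref{thm:sets} with $\mathcal F=\{\mathrm{id}\}$, $(B_k):=(\NN)$, and enlarged sequence $N'_k:=C(N_k+M_0)$; this produces $(A_k)\subset\Ald$ with $\min(A_k)\geq N'_k\geq N_k$ and $|n-m|\geq N'_k+N'_l$ whenever $n\in A_k$, $m\in A_l$, $n\neq m$. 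All three configurations of $(i,n)\neq(j,m)$ then follow from the displayed estimates, using $n\geq\max(N'_k,N'_l)\geq(N'_k+N'_l)/2$ together with $|\psi_i(n)-\psi_j(n)|\geq\frac{(1+\lambda)|a_1|}{2}n$ in the subcase $n=m$, $i\neq j$.

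\textbf{Main obstacle.} The main conceptual difficulty is the dichotomy between the generic and anti-parallel cases: no real-linear functional on $\CC$ makes both $L\circ\psi_i$ increasing in the latter, so Corollary \ref{cor:sets} cannot be invoked directly and one has to fall back on direct modulus estimates. Once this dichotomy is recognized, the remaining work in the degenerate case is a routine, if slightly involved, constant chase to verify that the chosen $C$ absorbs the ratios $|a_1|,|a_2|,1+\lambda$ into $N_k+N_l$.
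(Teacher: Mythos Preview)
Your proof is correct and follows the same overall strategy as the paper: project the complex-valued functions $\psi_i(n)=a_in+b_i\log n$ onto the real line via a $1$-Lipschitz linear form, then feed the resulting real increasing functions into Corollary~\ref{cor:sets} (or Theorem~\ref{thm:sets}) after a case split. The paper simply takes $L=\Re e$ or $L=\Im m$ and then splits according to whether the resulting real $a_1,a_2$ have the same or opposite signs; your use of a general real-linear functional $L$ with $L(a_1),L(a_2)>0$ is a mild refinement that makes the dichotomy (``anti-parallel vs.\ not'') intrinsic and sidesteps the edge case where projecting onto a coordinate axis might annihilate one of the $a_i$. In the anti-parallel/opposite-sign branch the paper keeps both functions $\phi_i$ in $\mathcal F$ (taking $\phi_1=-\psi_1$), whereas you collapse to $\mathcal F=\{\mathrm{id}\}$ and handle the cross-terms by the direct modulus estimate $|\psi_1(n)-\psi_2(m)|\gtrsim n+\lambda m$; both routes are short and equivalent in spirit.
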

\begin{proof}
 Using $|z|\geq\max\big(\Re e(z),\Im m(z)\big)$, we may and shall assume that $a_1$ and $a_2$ are distinct real numbers and that $b_1$ and $b_2$
 are real. Without loss of generality, we also assume that $|a_1|,|a_2|>1$. If $a_1$ and $a_2$ have opposite signs, say $a_1<0<a_2$, then 
 for all $n,m\in\NN$, 
 $$|a_1 n+b_1\log n-a_2 m-b_2\log m|\geq |a_1|\left(n-\frac{|b_1|}{|a_1|}\log n\right)+|a_2|\left(m-\frac{|b_1|}{|a_1|}\log m\right).$$
 Therefore, \eqref{eq:corsetscoholpar1} for $i\neq j$ is satisfied provided $n\geq M_k$
 and $m\geq M_l$ for some sequence $(M_k)$. To ensure \eqref{eq:corsetscoholpar1} for $i=j$, we set 
 \begin{align*}
  \phi_1(n)&=-\big( a_1n+b_1\log n\big)\\
  \phi_2(n)&= a_2n+b_2\log n
 \end{align*}
 which are eventually increasing. We also set $\mathcal F=\{\phi_1,\phi_2\}$, $B_k=[M_k,+\infty)\cap\NN$
 and we apply Theorem \ref{thm:sets} to $\mathcal F$, $(B_k)$ and $(N_k)$ to get a sequence $(A_k)\subset\Ald$. 
 This ends up the proof in that case since, for $i=1,2$, $(n,m)\in\NN^2$, 
 $$\big| a_i n+b_i\log n-a_i m-b_i\log m\big|\geq |\phi_i(n)-\phi_i(m)|-1.$$
 Assume now that $1<a_1,a_2$ (if $a_1=a_2$, then $b_1\neq b_2$) and set
  \begin{align*}
  \phi_1(n)&= a_1n+b_1\log n\\
  \phi_2(n)&= a_2n+b_2\log n
 \end{align*}
Then the result follows directly from Corollary \ref{cor:sets}.
\end{proof}


\section{Composition operators on $H^2(\mathbb D)$}

\subsection{Preliminaries}

We recall that $\lfm$ is the set of all linear fractional maps of $\DD$ and we denote by $\aut$ its subset of all automorphisms.
Linear fractional maps of $\DD$ may be classified into three classes:
\begin{itemize}
 \item parabolic members of $\lfm $ have only one fixed point, which lies on $\TT$ (and is necessarily attractive); we will denote by $\para$ the set of all parabolic linear fractional maps of $\DD$.
 \item hyperbolic members of $\lfm $ have one attractive fixed point on $\TT$ and a repulsive fixed point outside $\DD$; the second fixed point belongs to $\TT$
 if and only if the map is an automorphism; we will denote by $\hyp$ the set of all hyperbolic linear fractional maps of $\DD$.
 \item elliptic and loxodromic members of $\lfm $ have an attractive fixed point in $\DD$ and a repulsive fixed point outside $\overline{\DD}$.
\end{itemize}

Only the first two classes give rise to composition operators on $H^2(\DD)$ with interesting dynamical properties.
Therefore, we may and shall pick $\varphi\in \lfm $ parabolic or hyperbolic. 
Upon conjugation by a suitable automorphism, it will always be possible to assume that the attractive fixed point is $+1$
and that the repulsive fixed point (if any) lies on the line $(-\infty,-1]$. These maps will be more clearly understood by moving to the right half-plane 
$\CC_0$, where $\CC_a$ means $\{w:\ \Re e(w)>a\}$. Let $\mathcal C(z)=\frac{1+z}{1-z}$ be the Cayley map which maps the unit disc onto $\CC_0$. 
Let also $\mathcal H^2=\{f\circ \mathcal C^{-1}:\ f\in H^2(\mathbb D)\}$ which is endowed with
$$\|F\|_2^2 =\pi^{-1}\int_{\mathbb R}|F(t)|^2 dt/(1+t^2).$$
We will denote by $\minv$ the measure $\frac{dt}{\pi(1+t^2)}$. 

Via $\mathcal C$, the model map of a hyperbolic linear fractional map $\varphi$ of $\DD$ is a dilation $\psi(w)=\lambda (w-b)+b$, with $\lambda=1/\varphi'(1)>1$ and $\Re e(b)\leq 0$
(with the above normalization we may assume $b\in (-\infty,0]$ and even $b\in (-1,0)$ by conjugating $\psi$ with some dilation $w\mapsto \mu w$).
The model map of a parabolic automorphism $\varphi$ of $\DD$ is a vertical translation $\psi(w)=w+i\tau$ for some $\tau\in\RR^*$. 
Since we simultaneously deal with two linear fractional maps $\varphi_1$ and $\varphi_2$, if $\varphi_1$ and $\varphi_2$ do not have the same attractive
fixed point, it will not be possible to ensure that their model maps on $\CC_0$ both have this reduced form. In that case, we may assume that the
respective attractive fixed points are $+1$ and $-1$. In particular, if $\varphi\in\para\cap\aut$ has $-1$ as fixed point, then $\psi=\mathcal C\circ\varphi\circ\mathcal C^{-1}$
can be written $\psi(w)=\frac{w}{1+i\tau w}$ for some $\tau\in\RR^*$ (this comes from $w\mapsto w+i\tau$ conjugated by $w\mapsto 1/w$).

Let $\varphi\in\hyp$. Then $\Delta=\bigcup_{n\geq 1}\varphi^{-n}(\mathbb D)$ is a disc which contains the unit disc $\DD$ and $\varphi$ is an automorphism of $\Delta$. We will call $\Delta$ 
the automorphism domain of $\varphi$. Observe that the repulsive fixed point of $\varphi$ is nothing else than the attractive fixed point of $\varphi^{-1}$.
If $\varphi\in\para\cap\aut$, then we define its automorphism domain as the unit disk $\DD$.

\subsection{Geometric behaviour of linear fractional maps}
%

The proof of Theorem \ref{thm:coh2} will need to finely analyze the behaviour of the iterates of a linear fractional map and of its reciprocal on the circle. We start  with two geometric lemmas on the behaviour of the reciprocal of a linear fractional map far away from its attractive fixed point.

\begin{lemma}\label{lem:coh2hypdifferent1}
Let $\varphi\in\hyp\cup(\para\cap\aut)$, $\alpha\in\TT$ its attractive fixed point, $\delta_0>0$. Then there exists $C>0$ such that, for all $\xi\in \TT\backslash D(\alpha,2\delta_0)$, for all $\delta\in(0,\delta_0)$, for all $l\geq 0$, 
\begin{itemize}
\item $\varphi^{-l}\big(D(\xi,\delta)\cap\overline{\DD}\big)\subset D(\varphi^{-l}(\xi),C\delta\lambda^l)$ when $\varphi$ is hyperbolic and $\lambda=\varphi'(\alpha)$;
\item $\varphi^{-l}\big(D(\xi,\delta)\cap\overline{\DD}\big)\subset D(\varphi^{-l}(\xi),C \delta l^{-2})$
when $\varphi$ is parabolic.
\end{itemize}
\end{lemma}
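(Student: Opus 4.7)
The plan is to transfer the question to the right half-plane via the Cayley map $\mathcal C$ and to exploit the explicit affine form of $\psi:=\mathcal C\varphi\mathcal C^{-1}$. Up to conjugating $\varphi$ by an automorphism of $\DD$, which is bilipschitz on $\overline\DD\setminus D(\alpha,\delta_0/2)$ with constants depending only on $\varphi$ and $\delta_0$, we may assume $\alpha=1$; then $\psi^{-l}(w)=\lambda^l(w-b)+b$ with $b\in(-1,0]$ in the hyperbolic case, and $\psi^{-l}(w)=w-il\tau$ in the parabolic automorphism case.

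From $|\xi-1|\geq 2\delta_0$ we get $\mathcal C(\xi)=it_\xi$ with $|t_\xi|\lesssim 1/\delta_0$, and $\mathcal C$ is holomorphic, hence Lipschitz, on $\overline\DD\setminus D(1,\delta_0)$ with a constant $C_1=C_1(\delta_0)$. Therefore $\mathcal C(D(\xi,\delta)\cap\overline\DD)\subset D(it_\xi,C_1\delta)$ for $\delta<\delta_0$, and applying the affine map $\psi^{-l}$ produces a disc of radius $C_1\delta\lambda^l$ (resp.\ $C_1\delta$) around $w_l:=\psi^{-l}(it_\xi)$ in the hyperbolic (resp.\ parabolic) case. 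It remains to pull back through $\mathcal C^{-1}$, whose derivative at $w$ has modulus $2/|w+1|^2$, and to control this uniformly on the image disc.

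In the hyperbolic case, $w_l+1=1+(1-\lambda^l)b+i\lambda^l t_\xi$ satisfies $\Re(w_l+1)\geq 1+b>0$ for all $l\geq 0$; once $l$ exceeds a threshold depending on $\varphi$ and $\delta_0$, the radius $C_1\delta\lambda^l$ is less than $(1+b)/2$, whence $|w+1|\geq(1+b)/2$ on the image disc, $|(\mathcal C^{-1})'|$ is bounded there, and the desired inclusion with radius $\lesssim\delta\lambda^l$ follows. In the parabolic case, $w_l+1=1+i(t_\xi-l\tau)$ gives $|w_l+1|\sim l|\tau|$; for $l\geq l_0(\delta_0,\tau)$ the image disc of radius $C_1\delta$ lies in $\{|w+1|\geq l|\tau|/4\}$, so $|(\mathcal C^{-1})'|\lesssim 1/l^2$ there and the image has radius $\lesssim\delta/l^2$. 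The finitely many values of $l$ below these thresholds are handled by noting that $\varphi^{-l}$ is a linear fractional map whose pole $\mathcal C^{-1}(\psi^l(-1))$ lies outside $\overline\DD$ (since $\psi^l(-1)$ is either real $\leq -1$ or of the form $-1+il\tau$), so $\varphi^{-l}$ is Lipschitz on $\overline\DD$ and the corresponding finitely many Lipschitz constants are absorbed into $C$.

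The main delicate point throughout is precisely the uniformity of the distortion estimate for $\mathcal C^{-1}$ across the whole image disc rather than only at its centre $w_l$: the disc's radius must be small compared to the distance from $w_l$ to the pole $-1$ of $\mathcal C^{-1}$. This is automatic in the hyperbolic case once $\lambda^l$ is small, but in the parabolic case it is exactly what forces the split into asymptotic estimates for large $l$ and a compactness argument for small $l$.
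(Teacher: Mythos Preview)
Your proof is correct and follows essentially the same strategy as the paper's: transfer to the right half-plane via the Cayley map, use the explicit affine form of $\psi^{-l}$, then return. The differences are matters of presentation rather than substance.

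In the hyperbolic case your argument matches the paper's, though you are more explicit about the uniformity of $\mathcal C^{-1}$, which the paper dispatches in one line. Your split into large and small $l$ is in fact unnecessary here: since $\psi^{-l}(\overline{\CC_0})\subset\{\Re w\geq b\}$ for every $l\geq 0$ (as $(1-\lambda^l)b\geq b$), the entire relevant image lies in a fixed half-plane bounded away from $-1$, so $\mathcal C^{-1}$ is uniformly Lipschitz there with constant $2/(1+b)^2$, and no threshold on $l$ is needed. This is presumably what the paper has in mind with ``coming back to the disc with $\mathcal C^{-1}$''.

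In the parabolic case the paper takes a slightly different route: rather than transferring to $\CC_0$, it writes down the explicit M\"obius formula $\varphi^{-l}(z)=1+\frac{2(z-1)}{2+il\tau(z-1)}$ on the disc and computes
\[
|\varphi^{-l}(z)-\varphi^{-l}(\xi)|=\frac{4|z-\xi|}{|2+il\tau(\xi-1)|\,|2+il\tau(z-1)|},
\]
bounding each denominator factor from below by a multiple of $l$ since $|z-1|,|\xi-1|\geq\delta_0$. This is the same computation you do, just in the other chart; the advantage is that it handles all $l\geq 1$ at once without a separate compactness step for small $l$. Your half-plane argument is equally valid and perhaps more systematic, at the cost of the extra case split.
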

\begin{proof}
Assume first that $\varphi$ is hyperbolic with attractive fixed point $+1$. Its half-plane model is the map
$\psi(w)=\lambda^{-1}(w-b)+b$ where we can assume $\Re e(b) \in(0,1)$. Let $it_0=\mathcal C(\xi)$. There exists $C_1>0$
which does not depend on $\delta\in(0,\delta_0)$ and on $\xi\in\TT\backslash D(1,2\delta_0)$ such that $\mathcal C(D(\xi,\delta)\cap\overline{\DD})\subset D(it_0,C_1\delta)\cap\overline{\CC_0}$. Now, for all $w\in D(it_0,C_1\delta)$, 
$$|\psi^{-l}(w)-\psi^{-l}(it_0)|=\lambda^l |w-it_0|\leq \lambda^l C_1\delta.$$
We get the result by coming back to the disc with $\mathcal C^{-1}$.

Suppose now that $\varphi$ is a parabolic automorphism and let us assume that its attractive fixed point is $1$. Then $\varphi$ and $\varphi^{-l}$ can be written respectively 
$$\varphi(z)=1+\frac{2(z-1)}{2-i\tau(z-1)},\ \varphi^{-l}(z)=1+\frac{2(z-1)}{2+il\tau(z-1)},\ \tau\in\mathbb R^*$$
(see \cite[Chapter 0]{Sha93}). Therefore, for $z\in D(\xi,\delta)\cap \overline{\DD}$, 
\begin{equation}\label{eq:lemcoh2hypdifferent1}
\big|\varphi^{-l}(z)-\varphi^{-l}(\xi)\big|=\frac{4|z-\xi|}{|2+il\tau(\xi-1)|\cdot |2+il\tau(z-1)|}.
\end{equation}
By definition of $\delta_0$, there exists $\eta>0$ (which does not depend on $\delta$ or $\xi$) such that $\Re e(1-z)>\eta$ and $\Re e(1-\xi)>\eta$. Equality \eqref{eq:lemcoh2hypdifferent1} then yields immediately the result. 
\end{proof}

\begin{lemma}\label{lem:coh2hypdifferent2}
Let $\varphi\in\hyp$, let $\alpha,\ \beta$ be respectively the attractive and repulsive fixed points of $\varphi$, $\lambda=\varphi'(\alpha)$. Then there exists $C>0$ such that, for all $l\geq 1$, 
$$w\in \overline{\DD}\backslash D(\alpha,\lambda^{l/2})\implies \varphi^{-l}(w)\in D(\beta,C\lambda^{l/2}).$$
\end{lemma}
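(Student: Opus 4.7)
The natural approach is to transfer the estimate to the half-plane via the Cayley map $\mathcal{C}$, where the iterates of $\varphi^{-1}$ become an explicit affine contraction toward $b:=\mathcal{C}(\beta)$, and then to pull back.

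After normalizing $\alpha=1$, the half-plane conjugate $\psi=\mathcal{C}\circ\varphi\circ\mathcal{C}^{-1}$ has the form $\psi(W)=\mu(W-b)+b$ with $\mu=1/\lambda>1$ and $b\in(-1,0]$ (by the discussion preceding the lemma; $b=0$ in the automorphism case, $b\in(-1,0)$ otherwise). Consequently $\psi^{-l}(W)-b=\lambda^l(W-b)$. The identity $w-1=-2/(W+1)$ turns the hypothesis $|w-\alpha|\geq \lambda^{l/2}$ into $|W+1|\leq 2\lambda^{-l/2}$, and therefore
$$|\psi^{-l}(W)-b|\leq \lambda^l\bigl(|W+1|+|1+b|\bigr)\leq 2\lambda^{l/2}+\lambda^l(1+|b|)\leq C_1\lambda^{l/2}$$
for a constant $C_1$ depending only on $b$ (using $\lambda^l\leq \lambda^{l/2}$ since $\lambda<1$).

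To return to the disc I would use the elementary identity
$$\mathcal{C}^{-1}(Z)-\mathcal{C}^{-1}(b)=\frac{2(Z-b)}{(Z+1)(b+1)}.$$
Since $\beta$ is a finite point of $\mathbb{C}$, $b\neq -1$, so $|b+1|$ is a positive constant; for $l$ large enough one has $|\psi^{-l}(W)+1|\geq |b+1|/2$, yielding $|\varphi^{-l}(w)-\beta|\leq 4C_1\lambda^{l/2}/|b+1|^2$. The finitely many small values of $l$ are absorbed by enlarging $C$.

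The one subtlety worth checking is that $\psi^{-l}(W)$ never equals $-1$ (the pole of $\mathcal{C}^{-1}$) on the set of interest, since otherwise the computation would be meaningless. This is automatic: the only preimage is $W=b-(1+b)\lambda^{-l}$, which has $\Re W\leq b<0$ when $b<0$ and $\Re W=-\lambda^{-l}<0$ when $b=0$; in either case $W\notin\overline{\CC_0}$, contradicting $w\in\overline{\DD}$. Thus no case split between the automorphism and non-automorphism regimes is required—they are distinguished only by whether $b=0$ or $b\in(-1,0)$, and the bound is uniform in both.
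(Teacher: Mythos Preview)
Your proof is correct and follows essentially the same route as the paper: conjugate to the half-plane where $\psi^{-l}$ is the affine contraction $W\mapsto \lambda^l(W-b)+b$, translate the hypothesis into the bound $|W+1|\leq 2\lambda^{-l/2}$ (the paper uses the equivalent $|W|\leq 2\lambda^{-l/2}$), and read off $|\psi^{-l}(W)-b|\lesssim\lambda^{l/2}$. Your treatment is slightly more explicit than the paper's in two places---you spell out the return to the disc via $\mathcal C^{-1}(Z)-\mathcal C^{-1}(b)=2(Z-b)/((Z+1)(b+1))$ and you check that the pole $-1$ of $\mathcal C^{-1}$ is never hit---but these are details the paper leaves implicit rather than genuine differences in strategy.
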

\begin{proof}
We assume $\alpha=1$, we move on $\overline{\CC_0}$ and we consider $\psi(w)=\lambda^{-1}(w-b)+b$ with $b\in (-1,0)$. Then 
$$\mathcal C(\overline{\DD}\backslash D(1,\lambda^{l/2}))\subset \{w\in\overline{\CC_0}:\ |w|\leq 2\lambda^{-l/2}\}$$
and we just need to prove that $\psi^{-l}(w)$ is contained in $D(b,C\lambda^{l/2})$ for some $C>0$ for all $w\in\overline{\CC_0}$ with $|w|\leq 2\lambda^{-l/2}$.
Pick such a $w$ and observe that $\psi^{-l}(w)=\lambda^{l}(w-b)+b$
so that 
$$|\psi^{-l}(w)-b|\leq \lambda^{l}(2\lambda^{-l/2}+|b|)\leq C \lambda^{l/2}.$$
\end{proof}

We will also need a lemma which quantifies the measure of the set of points $z\in\TT$ such that $\varphi^{l}(z)$ is far from the attractive fixed point of $\varphi\in\hyp$. 
\begin{lemma}\label{lem:coh2hypdifferent4}
Let $\varphi\in\hyp$, $\alpha$ its attractive fixed point, $\lambda=\varphi'(\alpha)$. Then there exists $C>0$ such that, for all $\delta\in(0,1)$, for all $k\geq 1$, 
$$\textrm{meas}\left(\left\{z\in\TT:\ \varphi^{k}(z)\notin D(\alpha,\delta)\cap\overline{\DD}\right\}\right)\leq \frac{C}\delta \lambda^k.$$
\end{lemma}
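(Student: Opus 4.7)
The plan is to pass to the right half-plane via the Cayley map $\mathcal C(z)=(1+z)/(1-z)$, where $\varphi$ becomes an explicit dilation and the relevant quantities can be computed by hand. After conjugating by a suitable rotation (which distorts arc-length on $\TT$ only by a bounded factor) I may assume, following the normalization of the preliminaries, that $\alpha=1$ and that the repulsive fixed point of $\varphi$ lies on $(-\infty,-1]$. Then $\psi:=\mathcal C\circ\varphi\circ\mathcal C^{-1}$ has the form $\psi(w)=\mu(w-b)+b$ with $\mu=1/\lambda>1$ and $b\in(-\infty,0]$, so $\psi^{k}(w)=\mu^{k}(w-b)+b$.

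Next I parametrize $\TT\setminus\{1\}$ by $z=\mathcal C^{-1}(it)$, $t\in\RR$, under which the arc-length measure pulls back to $\frac{2\,dt}{1+t^{2}}$. The elementary identity $1-\mathcal C^{-1}(w)=\frac{2}{w+1}$ converts the condition $|\varphi^{k}(z)-1|\geq \delta$ into
\[
\bigl|\psi^{k}(it)+1\bigr|\leq \frac{2}{\delta}.
\]

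A direct computation gives $\psi^{k}(it)+1=\bigl(1+b(1-\mu^{k})\bigr)+i\mu^{k}t$. Because $b\leq 0$ and $1-\mu^{k}<0$, the real part is at least $1$, so in particular $|\psi^{k}(it)+1|\geq \mu^{k}|t|$. The inequality above therefore forces $|t|\leq 2\lambda^{k}/\delta$, and
\[
\operatorname{meas}\bigl\{z\in\TT:\varphi^{k}(z)\notin D(1,\delta)\bigr\}\leq \int_{-2\lambda^{k}/\delta}^{2\lambda^{k}/\delta}\frac{2\,dt}{1+t^{2}}\leq \frac{8\lambda^{k}}{\delta},
\]
which yields the desired estimate (the rotation used at the start is absorbed into the final constant $C$).

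There is no serious obstacle here: the proof is essentially a calculation in the half-plane model, where the dynamics is linear. The only point requiring a moment of care is translating the disc condition $|\varphi^{k}(z)-1|\geq \delta$ into the half-plane condition $|\psi^{k}(it)+1|\leq 2/\delta$ via the identity $1-\mathcal C^{-1}(w)=2/(w+1)$; once this is done, the fact that $\Re\bigl(\psi^{k}(it)+1\bigr)\geq 1$ makes the coefficient of $t$ the dominant term and the estimate follows immediately.
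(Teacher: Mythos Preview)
Your proof is correct and is essentially the paper's own argument: pass to the half-plane, where $\psi^{k}$ is an explicit dilation, and use $|\psi^{k}(it)+1|\geq \mu^{k}|t|$ (equivalently, the paper's $|\psi^{k}(it)|\geq \lambda^{-k}|t|$) to confine $t$ to an interval of length $O(\lambda^{k}/\delta)$. One cosmetic point: the conjugation that simultaneously places $\alpha=1$ and the repulsive fixed point on $(-\infty,-1]$ is a disc automorphism rather than a rotation---but this is exactly the normalization of the preliminaries you invoke, and the resulting bounded distortion of arc-length and of the Euclidean disc $D(\alpha,\delta)$ is absorbed into the constant $C$.
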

\begin{proof}
Again, we move on $\CC_0$ and we consider $\psi(w)=\lambda^{-1}(w-b)+b$ with $b\in(-1,0)$.
 Geometric considerations show that there exists $C>0$ such that for all $\delta\in(0,1)$,
$$\mathcal C\big(D(1,\delta)\cap\overline{\DD}\big)\supset \big\{w\in\overline{\CC_0}:\ |w|\geq C\delta^{-1}\big\}.$$
Therefore, it is sufficient to show that 
$$\minv\big(\big\{t\in\RR:\ |\psi^{k}(it)|\leq C\delta^{-1}\big\}\big)\leq \frac C\delta\lambda^k.$$
Now, $|\psi^{k}(it)|\geq \lambda^{-k}|t|$ so that 
$$\big\{t\in\RR:\ |\psi^{k}(it)|\leq C\delta^{-1}\big\}\subset \big\{t\in\RR:\ |t|\leq C\delta^{-1}\lambda^k\big\}$$
and it is clear that this last set has invariant measure less than $C\delta^{-1}\lambda^k$.
\end{proof}

A similar result holds true for parabolic automorphisms.
\begin{lemma}\label{lem:coh2pardifferent}
Let $\varphi\in\aut\cap\para$, $\alpha$ its attractive fixed point. There exist $C_1,C_2>0$ such that, for all $k\geq 1$, 
$$meas\left(\left\{z\in\TT:\ \varphi^k (z)\notin D(\alpha, C_1 k^{-1})\right\}\right)\leq C_2 k^{-1}.$$
\end{lemma}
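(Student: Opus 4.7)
The plan is to move to the right half-plane via the Cayley transform $\mathcal C$, exactly as in the proof of Lemma \ref{lem:coh2hypdifferent4}. Assuming without loss of generality that $\alpha=1$, the parabolic automorphism $\varphi$ conjugates to a vertical translation $\psi(w)=w+i\tau$ with $\tau\in\RR^*$, so $\psi^k$ is simply $w\mapsto w+ik\tau$. Since the normalized arc length on $\TT$ corresponds under $\mathcal C$ to $\minv=\frac{dt}{\pi(1+t^2)}$ on $\RR$, it suffices to bound the $\minv$-measure of the set of $t\in\RR$ such that $\psi^k(it)=i(t+k\tau)$ lies \emph{outside} $\mathcal C\bigl(D(1,C_1/k)\cap\overline{\DD}\bigr)$.

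Next, I would reuse the standard geometric inclusion already invoked in the proof of Lemma \ref{lem:coh2hypdifferent4}: there exists an absolute constant $A>0$ such that, for every $\delta\in(0,1)$,
$$\mathcal C\bigl(D(1,\delta)\cap\overline{\DD}\bigr)\supset\bigl\{w\in\overline{\CC_0}:\ |w|\geq A/\delta\bigr\}.$$
Applying this with $\delta=C_1/k$, the ``bad'' set on the imaginary axis is contained in $\{t\in\RR:\ |t+k\tau|\leq Ak/C_1\}$, an interval $I_k$ of length $2Ak/C_1$ centred at $-k\tau$.

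I would then choose $C_1$ large enough (depending only on $\tau$, e.g.\ $C_1>4A/|\tau|$) so that $|t|\geq k|\tau|/2$ for every $t\in I_k$ and every $k\geq 1$. A direct computation then yields
$$\minv(I_k)\,=\,\int_{I_k}\frac{dt}{\pi(1+t^2)}\,\leq\,\frac{2Ak/C_1}{\pi k^2\tau^2/4}\,=\,\frac{8A}{\pi C_1\tau^2}\cdot\frac{1}{k},$$
which is exactly the desired bound $C_2/k$. The argument presents no real difficulty once the half-plane picture is in place; the only point requiring a little care is the geometric inclusion for arcs around $1$ under $\mathcal C$, but this is precisely the ingredient already used in the companion Lemma \ref{lem:coh2hypdifferent4}, so no new work is needed.
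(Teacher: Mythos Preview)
Your proof is correct and follows essentially the same approach as the paper's: conjugate to the vertical translation $\psi(w)=w+i\tau$ on the half-plane, use the geometric inclusion $\mathcal C\bigl(D(1,\delta)\cap\TT\bigr)\supset\{it:|t|\geq C_0/\delta\}$, and choose $C_1$ large relative to $|\tau|$ so that the interval $I_k$ of length $O(k)$ centred at $-k\tau$ stays at distance $\gtrsim k$ from the origin, forcing $\minv(I_k)=O(1/k)$. The paper's choice is $C_0C_1^{-1}<\tau$ rather than your $C_1>4A/|\tau|$, but the argument is identical.
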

\begin{proof}
We assume $\alpha=1$ and move on the half-plane by considering $\psi(w)=w+i\tau$, $\tau>0$. Let $C_0$ be such that, for all $\delta>0$,
$$\mathcal C\big(D(1,\delta)\cap\TT\big)\supset \{it:\ |t|\geq C_0\delta^{-1}\}.$$
Let  $C_1>0$ be such that $C_0C_1^{-1}<\tau$. We are reduced to prove that 
\begin{equation}\label{eq:coh2pardifferent2}\minv \left(\left\{t\in\mathbb R:\ |t+k\tau|\leq C_0C_1^{-1}k\right\}\right)\leq C_2k^{-1}
\end{equation}
for some $C_2>0$. Now 
$$|t+k\tau|\leq C_0C_1^{-1} k\iff t\in [-k\tau-C_0C_1^{-1}k,-k\tau+C_0C_1^{-1}k]$$
and \eqref{eq:coh2pardifferent2} holds by our choice of $C_1$.
\end{proof}

\subsection{Several dense sets of functions in $H^2(\DD)$}

To apply Theorem \ref{thm:dfhcc}, we will need to exhibit dense subsets of $H^2(\DD)$ which are well adapted to the two self-maps of $\DD$ which come into play. The first lemma, which belongs to folklore, was already used in the context of linear dynamics of composition operators (see \cite[Lemma 1.48]{BM09} or \cite[Lemma 5]{Ta04}).

\begin{lemma}\label{lem:denseh2easy}
Let $d\geq 1$, $N\geq 1$, $\alpha_1,\dots,\alpha_N\in\CC\backslash \DD$. The set of holomorphic polynomials with a zero of order not less than $d$ at each $\alpha_i$ is dense in $H^2(\DD)$. 
\end{lemma}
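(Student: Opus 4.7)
The plan is to reduce everything to proving that, for each single $\alpha\in\CC\setminus\DD$, the ideal $(z-\alpha)\CC[z]$ is $H^2$-dense in $H^2(\DD)$. Granting this, the lemma follows by iteration across the factors of $B(z)=\prod_{i=1}^{N}(z-\alpha_i)^d$. Writing $B=\prod_{k=1}^{Nd}(z-\beta_k)$ as a product of linear factors, one proves inductively on $K\le Nd$ that $\prod_{k=1}^{K}(z-\beta_k)\cdot\CC[z]$ is dense in $H^2$: the inductive step exploits the boundedness of multiplication on $H^2$ by any polynomial (each polynomial is bounded on $\overline{\DD}$), so an approximant produced at stage $K-1$ can itself be $H^2$-approximated by $(z-\beta_K)q$ for a polynomial $q$, and the two errors are combined by the triangle inequality.

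For the single-factor statement, I would exhibit explicit polynomial approximants. The telescoping identity
\[
(z-\alpha)\left(-\tfrac{1}{\alpha}\sum_{j=0}^{k-1}(z/\alpha)^j\right)=1-(z/\alpha)^k
\]
is the starting point. If $|\alpha|>1$ then $\|(z/\alpha)^k\|_{H^2}=|\alpha|^{-k}\to 0$, so the left-hand side converges to $1$ in $H^2$. If $|\alpha|=1$ the remainder has unit norm, so I would Cesaro-average: setting $q_k(z)=-\tfrac{1}{\alpha}\sum_{j=0}^{k-1}(z/\alpha)^j$ and $Q_n=\tfrac{1}{n}\sum_{k=1}^{n}q_k$, one has $(z-\alpha)Q_n=1-\tfrac{1}{n}\sum_{k=1}^{n}(z/\alpha)^k$, and orthogonality of monomials in $H^2$ gives $\|\tfrac{1}{n}\sum_{k=1}^{n}(z/\alpha)^k\|_{H^2}=1/\sqrt{n}\to 0$. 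In either case $1\in\overline{(z-\alpha)\CC[z]}^{H^2}$; since multiplication by any monomial $z^m$ is continuous on $H^2$, every $z^m$ lies in this closure, and because polynomials are dense in $H^2(\DD)$ the closure is all of $H^2$.

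The principal obstacle is the boundary case $|\alpha|=1$: there $(z-\alpha)^{-1}$ is not a bounded multiplier on $H^2$, so the naive geometric-series argument diverges. The Cesaro averaging above is an elementary workaround; alternatively one may observe that $(z-\alpha)$ is an outer function in $H^2$ (its boundary modulus $|e^{i\theta}-\alpha|$ has integrable logarithm with the correct mean value on $\TT$) and invoke Beurling's theorem to conclude that $(z-\alpha)H^2$ is dense. Once the single-point density is in hand, the iteration across all factors of $B$ presents no further difficulty.
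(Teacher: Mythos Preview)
Your argument is correct. The paper does not actually prove this lemma: it is labelled folklore and the reader is referred to \cite[Lemma~1.48]{BM09} and \cite[Lemma~5]{Ta04}. The standard proof in those references is essentially your alternative remark: since $B(z)=\prod_i(z-\alpha_i)^d$ has no zeros in $\DD$, it is an outer function, so by Beurling's theorem $B\cdot H^2$ is dense in $H^2$, and polynomials are dense in $H^2$ hence in $B\cdot H^2$ as well (multiplication by $B$ being bounded). Your explicit Ces\`aro construction for the boundary case $|\alpha|=1$ is a pleasant elementary substitute that avoids the inner--outer machinery altogether.

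One small point on the induction: as written, approximating the stage-$(K-1)$ approximant $P_{K-1}p$ by $(z-\beta_K)q$ does not land you in $P_K\CC[z]$. What you want is to approximate the \emph{cofactor} $p$ by $(z-\beta_K)q$ using the single-factor lemma, and then multiply by the bounded function $P_{K-1}$; or, symmetrically, first use the single-factor lemma on $f$ and then the inductive hypothesis on the resulting cofactor. Either ordering works and you clearly have the right ingredients.
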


We will need a much more subtle dense subset of $H^2(\DD)$ which is well adapted to the dynamics of  the reciprocal of a linear fractional map. 
\begin{lemma}\label{lem:coh2hypdifferent3}
Let $\varphi\in\hyp\cup(\para\cap\aut)$ and let $\Delta$ be its automorphism domain. Let $\alpha$ (resp. $\beta$) be the attractive fixed point of $\varphi$ (resp. $\varphi^{-1}$),
$\lambda=\varphi'(\alpha)$. Let $\gamma\in \TT\backslash\{\alpha,\beta\}$ and
let us define $\gamma_l=\varphi^{-l}(\gamma)$, $l\geq 1$. Let also $d\geq 1$,  $N\geq 0$ and $\xi_1,\dots,\xi_N\in \CC\backslash\DD$.
Then the set of functions $f:\overline{\Delta}\backslash\{\beta\}\to \CC$ such that $f\in H^\infty(\Delta)$ and there exists $M>0$ satisfying
\begin{enumerate}[(a)]
\item for all $l\geq 1$, for all $z\in\overline{\Delta}\backslash\{\beta\}$, 
$$|f(z)|\leq M\left\{
\begin{array}{ll}
l^{2d}|z-\gamma_l|^d&\textrm{ if $\varphi$ is an automorphism}\\
\lambda^{-dl}|z-\gamma_l|^d&\textrm{ otherwise,}
\end{array}\right.$$
\item for all $i=1,\dots,N$, for all 
$z\in\overline{\Delta}\backslash\{\beta\}$, $|f(z)|\leq M|z-\xi_i|^d$,
\end{enumerate}
is dense in $H^2(\DD)$.
\end{lemma}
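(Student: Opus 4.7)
The plan is to combine Lemma~\ref{lem:denseh2easy} with a multiplier construction: I will build, in each of two geometrically distinct subcases, a function $\Phi\in H^\infty(\Delta)$ vanishing of order $d$ at every $\gamma_l$ with the required pointwise bound, and then show the dense set of polynomials $P$ from Lemma~\ref{lem:denseh2easy} gives $f=P\Phi$ that is dense in $H^2(\DD)$. The split is forced by the fact that the $\gamma_l$ live in very different positions in the two cases.

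\textbf{Subcase 1: $\varphi$ hyperbolic non-automorphism.} Here $\overline\DD\subsetneq \Delta$, $\beta\in\partial\Delta\setminus\overline\DD$, and the $\gamma_l$ sit in $\Delta\setminus\overline\DD$ converging non-tangentially to $\beta$ with $|\gamma_l-\beta|\asymp\lambda^l<1$. The Blaschke condition on $\Delta$ for the sequence $\{\gamma_l\}$ (with multiplicity $d$) is then immediate. I take $\Phi$ to be the resulting Blaschke product on $\Delta$; the elementary Schwarz--Pick bound $|B_{\gamma_l}^\Delta(z)|\leq |z-\gamma_l|/(R-|\gamma_l|)\lesssim \lambda^{-l}|z-\gamma_l|$ yields (a) directly, and (b) is trivial since $|\Phi|\leq 1$. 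The key point is that all zeros of $\Phi$ lie outside $\overline\DD$, so by compactness $|\Phi|\geq c>0$ on $\overline\DD$, whence $1/\Phi\in H^\infty(\DD)$ and multiplication by $\Phi$ is an isomorphism of $H^2(\DD)$. Density then follows by applying Lemma~\ref{lem:denseh2easy} to $h/\Phi$ for any target $h\in H^2(\DD)$.

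\textbf{Subcase 2: $\varphi$ is an automorphism.} Now $\Delta=\DD$ and the $\gamma_l\in\TT$ accumulate at $\beta\in\TT$. Blaschke products degenerate (zeros on $\partial\Delta$) and singular inner functions fail (a) since their boundary modulus is $1$ on $\TT\setminus\{\gamma_l\}$. My idea is to use an outer function: for each $\varepsilon>0$ set $\delta_l=\varepsilon/l^2$ and define on $\TT$
\[
\omega_\varepsilon(\zeta)=\prod_{l\geq 1}\min\!\left(\frac{|\zeta-\gamma_l|^d}{\delta_l^d},\,1\right),
\]
which is continuous on $\TT\setminus\{\beta\}$, takes values in $[0,1]$, and vanishes of order $d$ at each $\gamma_l$. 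Let $F_\varepsilon$ be the outer function with $|F_\varepsilon|_\TT=\omega_\varepsilon$; it is bounded, continuous on $\overline\DD\setminus\{\beta\}$, and vanishes of order $d$ at each $\gamma_l$. The pointwise bound (a) comes from the maximum-modulus principle applied to $g_l(z):=F_\varepsilon(z)/(z-\gamma_l)^d$, which is analytic on $\DD$ and satisfies $|g_l|_\TT\leq 1/\delta_l^d=l^{2d}/\varepsilon^d$. For proximity to $1$, the $L^2$-boundedness of the conjugation operator together with Taylor expansion of $\zeta\mapsto \zeta e^{i\theta}-1$ gives
$\|F_\varepsilon-1\|_{H^2}^2\lesssim \|\log\omega_\varepsilon\|_{L^2(\TT)}^2$, and a direct single-factor estimate $\int|\log\phi_l|^2\,dm\lesssim\delta_l$ yields the latter quantity as $O(\varepsilon)$. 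Finally $f=PF_\varepsilon$ with $P$ from Lemma~\ref{lem:denseh2easy} gives $\|f-h\|_{H^2}\leq \|h-P\|_{H^2}+\|P\|_\infty\,\|F_\varepsilon-1\|_{H^2}\to 0$ as $P\to h$ and $\varepsilon\to 0$, while (a) and (b) hold with $M=M(P,\varepsilon)$.

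The main obstacle is Subcase 2: one must reconcile boundary vanishing at \emph{infinitely many} points with $H^2$-closeness to $1$, outside both the Blaschke and singular-inner frameworks. The calibration $\delta_l=\varepsilon/l^2$ is the balance point, producing exactly the polynomial factor $l^{2d}$ in~(a). An extra subtlety arises in the hyperbolic automorphism case: since the gaps between consecutive $\gamma_l$ are $\asymp\lambda^l\ll \delta_l$, the neighborhoods $B(\gamma_l,\delta_l)$ overlap near $\beta$. I will handle this by estimating $|\log\omega_\varepsilon(\zeta)|=O(\sqrt{\varepsilon/|\zeta-\beta|})$ via a count of active factors at distance $r$ from $\beta$, and integrating against $dm$ on $\TT$ using that $|\zeta-\beta|^{-1}\in L^1(\TT)$.
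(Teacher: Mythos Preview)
Your Subcase~1 is correct and coincides with the paper's argument (Blaschke product on $\Delta$, bounded below on $\overline{\DD}$ since the zeros lie in $\Delta\setminus\overline{\DD}$).

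In Subcase~2 your outer-function idea is genuinely different from the paper, which instead takes the elementary rational product
\[
B_a(z)=\prod_{l\geq 1}\left(\frac{z-\gamma_l}{z-(1+a/l^2)\gamma_l}\right)^d,\qquad a\in(0,1),
\]
and proves $B_a\to 1$ pointwise on $\overline{\DD}\setminus\{\beta\}$ as $a\to 0$ by dominated convergence on the logarithms. This avoids conjugate-function theory entirely and treats the parabolic and hyperbolic automorphism cases simultaneously.

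Your argument, however, has a real gap in the \emph{hyperbolic automorphism} case. The assertion ``$|\zeta-\beta|^{-1}\in L^1(\TT)$'' is false: for $\beta\in\TT$ one has $|\zeta-\beta|\sim|\theta-\theta_\beta|$, and $\int_0^\pi t^{-1}\,dt$ diverges. Worse, the $L^2$ route cannot be repaired with a better pointwise bound: since $|\gamma_l-\beta|\asymp\lambda^l$ while $\delta_l=\varepsilon/l^2$, for $\zeta$ at distance $r\ll\varepsilon$ from $\beta$ there are roughly $\sqrt{\varepsilon/r}$ active factors with $|\zeta-\gamma_l|\sim r$, each contributing $\asymp\log(\varepsilon/(rl^2))$ to $|\log\omega_\varepsilon(\zeta)|$. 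Summing gives $|\log\omega_\varepsilon(\zeta)|\gtrsim\sqrt{\varepsilon/r}$, whence $\int_\TT|\log\omega_\varepsilon|^2\,dm\gtrsim\int_0^{c}\varepsilon/r\,dr=+\infty$. So $\log\omega_\varepsilon\notin L^2(\TT)$ and your inequality $\|F_\varepsilon-1\|_{H^2}\lesssim\|\log\omega_\varepsilon\|_{L^2}$ is vacuous here.

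Your construction can be rescued, but not via $L^2$: one always has $\|\log\omega_\varepsilon\|_{L^1}=\sum_l\|\log\phi_l\|_{L^1}\lesssim\sum_l\delta_l\lesssim\varepsilon$, so by the weak-$(1,1)$ bound for the conjugate function $\tilde u_\varepsilon\to 0$ in measure; along a subsequence $\tilde u_{\varepsilon_n}\to 0$ a.e., hence $F_{\varepsilon_n}\to 1$ a.e.\ on $\TT$, and then $\|F_{\varepsilon_n}-1\|_{H^2}\to 0$ by dominated convergence (using $|F_{\varepsilon_n}|\leq 1$). If you adopt this route, the proof goes through, but the paper's rational product $B_a$ is simpler and sidesteps the issue altogether.
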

\begin{proof}
We first assume that $\varphi$ is not an automorphism, so that $\gamma_l\in\Delta\backslash\DD$
for all $l\geq 1$. The sequence $(\gamma_l)$ is a Blaschke sequence with respect to $\Delta$.
Let $B$ be the Blaschke product (of $\Delta$) whose zeros are the $\gamma_l$, $l\geq 1$, with multiplicity $d$.
We claim that for any polynomial $P$ with a zero of order at least $d$ at $\xi_i$, $i=1,\dots,N$,
the function $f=BP$ satisfies (a) and (b). The last point is immediate. To prove (a), we may assume that $\Delta$ is the disk $D(-\rho,1+\rho)$ for some $\rho>0$
namely that $\beta=-1-2\rho$. $B$ is then given by $\prod_{l\geq 1}\left(\frac{T(z)-T(\gamma_l)}{1-\overline{T(\gamma_l)}T(z)}\right)^d$ where $T:\Delta\to\DD,\ z\mapsto (z+\rho)/1+\rho$.
Each factor has modulus less than $1$ on $\Delta$. Moreover, since $(\gamma_l)$ tends nontangentially (with respect to $\Delta$) to $\beta$,
we know that there exists $C_1>0$ such that 
\begin{align*}
 1+\rho-|\gamma_l+\rho| &\geq C_1|\beta-\gamma_l|\\
 &\geq C_2\lambda^{l}
\end{align*}
for some $C_2>0$. This yields easily the existence of $M>0$ such that, for all $l\geq 1$ and all $z\in\Delta$,
$$\frac1{|1-\overline{T(\gamma_l)}T(z)|}\leq M\lambda^{-l}$$
which in turn proves (a).

Moreover, the set $\{BP:\ P\ $ is a polynomial with a zero of order at least $d$ at $\xi_i$, $i=1,\dots,N\}$ is dense in $H^2(\DD)$. Indeed, pick $f\in H^2$, $\veps>0$ and let $P$ be such a polynomial with $\left\|P-\frac fB\right\|_2<\veps$ (observe that $|B|\geq \delta>0$ on $\DD$). Then $\|BP-f\|_2<\veps$. 

We turn to the case of automorphisms. For $a\in(0,1)$, let 
$$B_a(z)=\prod_{l=1}^{+\infty}\left(\frac{z-\gamma_l}{z-\left(1+\frac a{l^2}\right)\gamma_l}\right)^d.$$
Let $z\in\overline{\DD}\backslash\{\beta\}$. Then there exists $l_0\geq 1$ and $\delta>0$ such that, for all $l\geq l_0$, $|z-\gamma_l|>\delta$. We write
$$\frac{z-\gamma_l}{z-\left(1+\frac a{l^2}\right)\gamma_l}=1+\frac a{l^2}\times\frac{\gamma_l}{z-\left(1+\frac a{l^2}\right)\gamma_l}.$$
Since for $l\geq l_0$, 
\begin{equation}\label{eq:lemcoh2hypdifferent3}
\left| \frac a{l^2}\times\frac{\gamma_l}{z-\left(1+\frac a{l^2}\right)\gamma_l}\right|\leq \frac 1{l^2 |z-\gamma_l|}\leq \frac1{\delta l^2},
\end{equation}
the infinite product converges on $\overline{\DD}\backslash\{\beta\}$. It is not difficult to see that the convergence is locally uniform on $\overline{\DD}\backslash\{\beta\}$ so that $B_a\in H^\infty(\DD)$ with $\|B_a\|_\infty\leq 1$. Let $P$ be a holomorphic polynomial. We shall prove that $B_a P\xrightarrow{a\to 0}P$ in $H^2(\DD)$. It is sufficient to prove that $B_a(z)$ tends to $1$ for all $z\in \overline{\DD}\backslash\{\beta\}$ as $a$ tends to $0$. But this follows from Lebesgue's convergence theorem applied to $\sum_l \log\left(1+\frac a{l^2}\times\frac{\gamma_l}{z-\left(1+\frac a{l^2}\right)\gamma_l}\right)$, using \eqref{eq:lemcoh2hypdifferent3}. 
Finally, for all $z\in\overline{\DD}\backslash\{\beta\}$, for all $a\in(0,1)$,
$$|B_a(z)|\leq \frac{|z-\gamma_l|^d}{\left|z-\left(1+\frac a{l^2}\right)\gamma_l\right|^d}\leq \frac{l^{2d}}a|z-\gamma_l|^d.$$ 
Lemma \ref{lem:denseh2easy} ends up the proof.
\end{proof}

\subsection{Estimates of (sum of) functions in $H^2(\DD)$}

To prove the uniform unconditional convergence of the series that appear in Theorem \ref{thm:dfhcc}, we will need to estimate the norm of some sums of functions of $H^2(\DD)$. The case of hyperbolic maps $\varphi$ will be significantly easier since the convergence of
$\varphi^n$ to the attractive fixed point is so fast that it will ensure the convergence of $\sum_n \|C_\varphi^n(f)\|$ for well choosen functions $f$. What we need in this paper is contained in the two forthcoming lemmas. 

\begin{lemma}\label{lem:coh2hypsame}
Let $b_1,b_2\in\CC$ with $-1<\Re e(b_1),\ \Re e(b_2)\leq 0$ and let $\beta_1=\mathcal C^{-1}(b_1)$. Let $P$ be a polynomial with $P(1)=P(\beta_1)=0$ and let $F=P\circ \mathcal C^{-1}$. There exists $C>0$ such that
\begin{enumerate}[(a)]
\item for all $\gamma\geq 1$, for all $\kappa\in(0,1)$, 
$$\int_{\mathbb R} |F(\gamma(it-b_2)+\kappa (b_2-b_1)+b_1)|^2\frac{dt}{1+t^2}\leq \frac{C}{\gamma^{1/2}}.$$
\item for all $\gamma\in (0,1)$, for all $\kappa\in(0,1)$, 
$$\int_{\mathbb R} |F(\gamma(it-b_2)+\kappa (b_2-b_1)+b_1)|^2\frac{dt}{1+t^2}\leq {C}\big(\gamma^{1/2}+\kappa^2\big).$$
\end{enumerate}
\end{lemma}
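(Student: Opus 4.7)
The plan is to expand $F$ explicitly and then bound the resulting integral by a two-scale analysis tailored to the geometry of the curve $t \mapsto w(t) := i\gamma t + A$, where $A := (1-\kappa)b_1 + (\kappa - \gamma)b_2$. Write $P(z) = (z-1)(z-\beta_1)Q(z)$ with $Q$ a polynomial; higher-order zeros of $P$ at $1$ or $\beta_1$ only yield stronger decay, so this is the critical case. The Cayley identities $\mathcal{C}^{-1}(w) - 1 = -2/(w+1)$ and $\mathcal{C}^{-1}(w) - \beta_1 = 2(w - b_1)/((w+1)(b_1+1))$ give
$$F(w) = -\frac{4(w-b_1)}{(b_1+1)(w+1)^2}\,Q(\mathcal{C}^{-1}(w)).$$
The identity $\Re(w(t)) + 1 = (1-\kappa)(1+\Re b_1) + \kappa(1+\Re b_2) - \gamma\Re b_2$ has its last term nonnegative (as $\Re b_2 \leq 0$) and its convex-combination part bounded below by $\eta := \min(1+\Re b_1,\, 1+\Re b_2) > 0$, uniformly in the admissible $(\gamma, \kappa)$. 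Hence $|\mathcal{C}^{-1}(w(t))| \leq 1 + 2/\eta$, so $Q\circ \mathcal{C}^{-1}$ is bounded by some $M$ along the curve, and
$$|F(w(t))|^2 \lesssim \frac{|w(t)-b_1|^2}{|w(t)+1|^4}, \qquad |w(t)+1|^2 \geq \eta^2 + (\gamma t + \omega)^2,$$
where $\omega := \Im A$.

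For part (a) I would discard the $|w-b_1|$ factor and use only $|F(w)|^2 \lesssim 1/|w+1|^2$. The substitution $s = \gamma t$ reduces the integral to a convolution of Poisson kernels, which by $P_a * P_b = P_{a+b}$ evaluates explicitly to
$$I \;\lesssim\; \gamma \int_{\mathbb R} \frac{ds}{(\eta^2 + (s+\omega)^2)(\gamma^2 + s^2)} \;=\; \frac{\pi(\eta+\gamma)}{\eta((\eta+\gamma)^2 + \omega^2)} \;\lesssim\; \frac{1}{\eta(\eta+\gamma)} \;\lesssim\; \frac{1}{\gamma}.$$
Since $\gamma \geq 1$, this dominates the claimed bound $C/\gamma^{1/2}$.

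For part (b) the zero of $F$ at $b_1$ becomes essential. From $w(t) - b_1 = i\gamma t + \kappa(b_2 - b_1) - \gamma b_2$ one gets $|w(t) - b_1|^2 \lesssim \gamma^2 t^2 + \kappa^2 + \gamma^2$. Since $|\omega|$ is bounded for $\gamma, \kappa \in (0,1)$, I would split the integration at $T := \max(1,\, 2|\omega|/\gamma) \lesssim 1/\gamma$. On $\{|t| \leq T\}$, using $|w+1|\geq \eta$ gives
$$\int_{|t|\leq T} \frac{\gamma^2 t^2 + \kappa^2 + \gamma^2}{1+t^2}\, dt \;\lesssim\; \gamma^2 T + \kappa^2 + \gamma^2 \;\lesssim\; \gamma + \kappa^2.$$
On $\{|t| > T\}$, $|\gamma t + \omega| \geq \gamma|t|/2$, so $|w+1|^4 \gtrsim \gamma^4 t^4$, and the tail is $\lesssim 1/(\gamma^2 T^3) + (\kappa^2 + \gamma^2)/(\gamma^4 T^5) \lesssim \gamma + \kappa^2$. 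Summing, $I \lesssim \gamma + \kappa^2$, and $\gamma \leq \gamma^{1/2}$ for $\gamma \leq 1$ yields the claim.

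The main obstacle is (b): the $\kappa^2$ decay arises from the constant-in-$t$ part of $w(t) - b_1$, which is small precisely because $F(b_1) = 0$. One has to separate carefully the $\kappa$- and $\gamma$-contributions in $|w-b_1|^2$ and choose the splitting scale $T \sim 1/\gamma$ so that neither the near-origin piece (dominated by $\eta$ in the denominator) nor the far-tail piece (dominated by $\gamma^4 t^4$) contributes the wrong parameter.
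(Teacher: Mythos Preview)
Your overall strategy is sound and is in the same spirit as the paper's proof: both use that $P(1)=0$ yields $|F(w)|\lesssim 1/|w+1|$ (decay at infinity) and $P(\beta_1)=0$ yields $|F(w)|\lesssim |w-b_1|$ (smallness near $b_1$), after first checking that the curve $w(t)$ stays in a half-plane $\CC_\sigma$ on which $\mathcal C^{-1}$ is bounded. Your explicit Cayley computation packages these two bounds into the single estimate $|F|^2\lesssim |w-b_1|^2/|w+1|^4$, and in part~(a) your Poisson-convolution evaluation is neater than the paper's splitting at $|\gamma(t-y)+y'|^2=\gamma$: you obtain $C/\gamma$ directly, which is sharper than the stated $C/\gamma^{1/2}$.

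There is, however, a genuine gap in your treatment of part~(b). Your tail bound reads
\[
\frac{1}{\gamma^2 T^3}+\frac{\kappa^2+\gamma^2}{\gamma^4 T^5}\ \lesssim\ \gamma+\kappa^2,
\]
and this last inequality requires $T\gtrsim 1/\gamma$. But your choice $T=\max(1,\,2|\omega|/\gamma)$ only guarantees $T\lesssim 1/\gamma$ (as you note), not the reverse. When $|\omega|$ is small --- in particular when $b_1,b_2\in\RR$, which is precisely the normalization used in the applications of this lemma --- one has $\omega=0$, hence $T=1$, and then $1/(\gamma^2T^3)=1/\gamma^2$ blows up as $\gamma\to 0$. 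The fix is immediate: since $|\omega|$ is bounded by $M_0:=|\Im b_1|+|\Im b_2|$, simply take $T=C_0/\gamma$ with $C_0:=\max(1,2M_0)$. Then $T\geq 1$ (as $\gamma\leq 1$), $T\geq 2|\omega|/\gamma$ (so $|\gamma t+\omega|\geq\gamma|t|/2$ on the tail), and now $T\sim 1/\gamma$ on both sides, so $\gamma^2 T\lesssim\gamma$ on the near part and $1/(\gamma^2T^3)\lesssim\gamma$ on the tail. With this correction your argument in fact yields the stronger bound $C(\gamma+\kappa^2)$. The paper handles (b) differently: it splits at $|t-y_1|=\gamma^{-1/2}$, uses only $|F|^2\lesssim|w-b_1|^2$ on the near part, and on the far part uses merely that $F$ is bounded together with $\int_{|t-y_1|>\gamma^{-1/2}}dt/(1+t^2)\lesssim\gamma^{1/2}$.
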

\begin{proof}
Let $\sigma=\min(\Re e(b_1),\Re e(b_2))\in (-1,0]$. We observe that, for any $t\in\mathbb R$, any $\gamma>0$ and any $\kappa\in (0,1)$,
$\gamma(it-b_2)+\kappa (b_2-b_1)+b_1$ belongs to $\mathbb C_\sigma$. Indeed, if $\sigma=\Re e(b_1)$, 
\begin{align*}
\Re e\big(\gamma(it-b_2)+\kappa(b_2-b_1)+b_1\big)&= -\gamma \Re e(b_2)+\kappa \Re e(b_2-b_1)+\Re e(b_1)\\
&\geq \Re e(b_1)
\end{align*}
whereas, if $\sigma=\Re e(b_2)$, then 
\begin{align*}
\Re e\big(\gamma(it-b_2)+\kappa(b_2-b_1)+b_1\big)&= -\gamma \Re e(b_2)+(1-\kappa) \Re e(b_1-b_2)+\Re e(b_2)\\
&\geq \Re e(b_2).
\end{align*}
The set $\Omega=\mathcal C^{-1}(\mathbb C_\sigma)$ is a disk in $\mathbb C$ containing $\mathbb D$ and having $1$ as a boundary point. Our assumption on $P$ ensures the existence of $C_1>0$ such that, for all $z\in\bar{\Omega}$, 
$$|P(z)|\leq C_1|z-1|\textrm{ and }|P(z)|\leq C_1|z-\beta_1|.$$
This inequality transfers to $F$ as follows: for all $w$ in $\overline{\mathbb C_{\sigma}}$, 
$$|F(w)|\leq C_1\left|\frac{w-1}{w+1}-1\right|\leq \frac{2C_1}{|w+1|}\textrm{ and }
|F(w)|\leq C_1\left|\frac{w-1}{w+1}-\frac{b_1-1}{b_1+1}\right|\leq C_2 |w-b_1|.$$
We are now ready to prove (a). Let $\gamma\geq 1$, $\kappa\in(0,1)$ and $t\in\mathbb R$. Then 
$$|F(\gamma(it-b_2)+\kappa(b_2-b_1)+b_1)|^2\lesssim \frac{1}{(1+x'+\gamma x)^2+(y'+\gamma(t-y))^2}$$
for some $x,x',y,y'\in \mathbb R$ which do not depend on $\gamma$ (but $x'$ and $y'$ depend on $\kappa$) and with $x\geq 0$ and $x'>-1$. Therefore,
$$|F(\gamma(it-b_2)+\kappa(b_2-b_1)+b_1)|^2\lesssim \frac{1}{1+(y'+\gamma(t-y))^2}.$$
We first integrate on the set of $t$ such that $|\gamma(t-y)+y'|^2\geq \gamma$. 
Then 
$$\int_{|\gamma(t-y)+y'|^2\geq \gamma} |F(\gamma(it-b_2)+\kappa (b_2-b_1)+b_1)|^2\frac{dt}{1+t^2}\lesssim\int_{\mathbb R}\frac{dt}{\gamma(1+t^2)}\lesssim \frac 1\gamma.$$
On the other hand, the interval $|\gamma(t-y)+y'|^2\leq \gamma$ has Lebesgue measure $2\gamma^{-1/2}$. Since $F$ is bounded on $\mathbb C_\sigma$, we obtain
$$\int_{|\gamma(t-y)+y'|^2\leq \gamma} |F(\gamma(it-b_2)+\kappa (b_2-b_1)+b_1)|^2\frac{dt}{1+t^2}
\lesssim \frac 1{\gamma^{1/2}}.$$
Let us turn to (b). In that case, for all $\gamma\in(0,1)$, all $\kappa\in(0,1)$ and all $t\in\mathbb R$, 
\begin{align*}
|F(\gamma(it-b_2)+\kappa(b_2-b_1)+b_1)|^2&\lesssim |\gamma(it-b_2)+\kappa(b_2-b_1)|^2\\
&\lesssim \gamma^2 |t-y_1|^2+\kappa^2 y_2+\gamma^2 y_3
\end{align*}
with $y_1,y_2,y_3$ independent of $\kappa$ and $\gamma$. On the interval $|t-y_1|^2<\gamma^{-1}$, we use this inequality to get
$$\int_{|t-y_1|^2<\gamma^{-1}}|F(\gamma(it-b_2)+\kappa(b_2-b_1)+b_1)|^2\frac{dt}{1+t^2}\lesssim\int_{\mathbb R}( \gamma+\kappa^2)\frac{dt}{1+t^2}\lesssim \gamma+\kappa^2.$$
On the remaining part of $\mathbb R$, we just use that $F$ is bounded on $\mathbb C_\sigma$ to get
$$\int_{|t-y_1|^2>\gamma^{-1}} |F(\gamma(it-b_2)+\kappa(b_2-b_1)+b_1)|^2\lesssim \int_{|t-y_1|^2>\gamma^{-1}}\frac{dt}{1+t^2}\lesssim \gamma^{1/2}.$$
\end{proof}

The next lemma will play the role of Lemma \ref{lem:coh2hypsame} in the context of hyperbolic maps with different attractive fixed points.

\begin{lemma}\label{lem:coh2hypdifferent5}
Let $\varphi_1,\varphi_2\in\lfm$ with $\varphi_2$ hyperbolic.
Let $\alpha_1,\ \alpha_2$ be the respective attractive fixed points of $\varphi_1$ and $\varphi_2$, $\lambda=\varphi_2'(\alpha_2)$, 
and assume that $\alpha_1\neq\alpha_2$. Let $\Delta$ be the automorphism domain of $\varphi_2$ and let $\beta_2$ be the repulsive fixed point of $\varphi_2$. Let $f:\overline{\Delta}\backslash\{\beta_2\}\to\mathbb C$ be holomorphic in $\Delta$ and such that $|f(z)|\leq A |z-\beta_2|$ for some $A>0$, for all $z\in\overline{\Delta}\backslash\{\beta_2\}$. Then there exists $M>0$ such that, for all $l,k\geq 0$, 
$$\|f\circ \varphi_2^{-l}\circ\varphi_1^k\|\leq M \lambda^{l/2}.$$
\end{lemma}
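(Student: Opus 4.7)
The plan is to first use $|f(w)|\leq A|w-\beta_2|$ to reduce the claim to showing $\|(\varphi_2^{-l}-\beta_2)\circ\varphi_1^k\|_{H^2(\DD)}^2 \lesssim \lambda^l$ uniformly in $k$. I will pass to the half-plane via $\mathcal{C}$, placing $\alpha_2 = 1$, so that $\psi_2^{-l}(w)=\lambda^l(w-b_2)+b_2$ with $b_2=\mathcal{C}(\beta_2)$ and $-1<\Re e(b_2)\leq 0$. A direct computation then yields
\[
G(w):=(\varphi_2^{-l}-\beta_2)\circ\mathcal{C}^{-1}(w)=\frac{2(w-b_2)}{(b_2+1)(w-r_l)},\qquad r_l=b_2-\frac{b_2+1}{\lambda^l}.
\]
The key geometric input is $\Re e(r_l)\leq -c_0/\lambda^l$ with $c_0:=1+\Re e(b_2)>0$; combined with $\Re e(\psi_1^k(it))\geq 0$ (since $\psi_1$ preserves $\CC_0$), this gives $|\psi_1^k(it)-r_l|\geq c_0/\lambda^l$.

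Setting $v_k(t)=|\psi_1^k(it)-b_2|$ and $R_l=2|b_2+1|/\lambda^l=2|b_2-r_l|$, I will split $\RR=E\cup E^c$ according to whether $v_k(t)>R_l$. On $E^c$, the bound above gives $|G(\psi_1^k(it))|\lesssim\lambda^l v_k(t)$; on $E$, the reverse triangle inequality yields $|\psi_1^k(it)-r_l|\geq v_k(t)-R_l/2\geq v_k(t)/2$, so $|G(\psi_1^k(it))|\lesssim 1$. Hence, with $dm=dt/(\pi(1+t^2))$,
\[
\|G\circ\psi_1^k\|_{\mathcal{H}^2}^2\lesssim m(E)+\lambda^{2l}\int_{E^c}v_k^2\,dm.
\]
Both pieces will reduce to a uniform tail bound $m(\{v_k\geq s\})\leq C^*/s$ for every $s>0$ and $k\geq 0$, with $C^*$ depending only on $\varphi_1$: this gives $m(E)\leq C^*/R_l\lesssim\lambda^l$, and via the layer-cake formula
\[
\int_{E^c}v_k^2\,dm\leq 2\int_0^{R_l}s\,m(\{v_k\geq s\})\,ds\lesssim R_l\lesssim\lambda^{-l},
\]
so that $\lambda^{2l}\int_{E^c}v_k^2\,dm\lesssim\lambda^l$.

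The main obstacle is establishing this uniform tail bound. Since $\psi_1^k$ is Möbius with a single simple pole $w_0^{(k)}$ on the Riemann sphere, $v_k(t)\sim|\rho_k|/|it-w_0^{(k)}|$ near the pole, where $\rho_k$ is the residue. When $\varphi_1\in\aut$, conjugating $\psi_1$ by the automorphism $w\mapsto 1/(w-a_1)$ of $\CC_0$ (with $a_1=\mathcal{C}(\alpha_1)$) sends $a_1$ to $\infty$ and reduces $\psi_1$ to a dilation (hyperbolic case) or a translation (parabolic case); iterating produces a closed formula for $\psi_1^k$ from which one checks that $|\rho_k|$ is uniformly bounded in $k$ (in fact decaying geometrically in the hyperbolic case and like $1/k$ in the parabolic case) and that $\Im w_0^{(k)}$ stays in a compact subset of $\RR$. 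When $\varphi_1\notin\aut$, for $k\geq 1$ the pole $w_0^{(k)}$ lies in $\{\Re e<0\}$, so $\psi_1^k|_{i\RR}$ and hence $v_k$ are uniformly bounded, making the tail bound trivial; the case $k=0$ reduces directly to $v_0(t)=|it-b_2|$.
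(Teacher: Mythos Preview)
Your approach is correct and runs parallel to the paper's, though the packaging differs. The paper also splits the integral in two, but on the disc side: with $A_{k,l}=\{\theta:\varphi_1^k(e^{i\theta})\in D(\alpha_2,\lambda^{l/2})\}$, it uses Lemma~\ref{lem:coh2hypdifferent2} on $A_{k,l}^c$ to get $|f\circ\varphi_2^{-l}\circ\varphi_1^k|\lesssim\lambda^{l/2}$, and on $A_{k,l}$ uses boundedness of $f$ together with $\mathrm{meas}(A_{k,l})\lesssim\lambda^{l/2}$, proved by exactly the same case analysis on $\varphi_1$ (non-automorphism / hyperbolic automorphism / parabolic automorphism) that underlies your tail estimate. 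Your explicit half-plane formula for $G$ replaces the appeal to Lemma~\ref{lem:coh2hypdifferent2}, and your split at the larger scale $R_l\sim\lambda^{-l}$ combined with the layer-cake identity is slightly sharper: it yields $\|\cdot\|^2\lesssim\lambda^l$, i.e.\ the stated bound $\lambda^{l/2}$, whereas the paper's split at $\lambda^{l/2}$ gives $\|\cdot\|^2\lesssim\lambda^l+\lambda^{l/2}$, hence only $\|\cdot\|\lesssim\lambda^{l/4}$ (immaterial for the applications, which only need summability in $l$). Two small points to tighten: the inequality $-1<\Re e(b_2)$ is not automatic from $\alpha_2=1$ alone (e.g.\ $\beta_2=3$ gives $b_2=-2$); you need the additional dilation normalization from the preliminaries, which changes $H^2$-norms only by a fixed factor. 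And in the non-automorphism case, ``pole in $\{\Re e<0\}$'' gives boundedness of $v_k$ for each $k$ but not uniformly in $k$; the uniform bound follows rather from $\varphi_1^k(\TT)\subset\varphi_1(\overline{\DD})$ for $k\geq 1$, the latter being compact and avoiding $\alpha_2$, so that $\psi_1^k(i\RR)$ stays in a fixed bounded subset of $\overline{\CC_0}$.
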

\begin{proof}
We write 
$$\|f\circ\varphi_2^{-l}\circ\varphi_1^k\|^2=\int_0^{2\pi}|f\circ\varphi_2^{-l}(\varphi_1^k(e^{i\theta}))|^2\frac{d\theta}{2\pi}$$
and we denote by
$$A_{k,l}=\big\{\theta\in[0,2\pi]:\ \varphi_1^k(e^{i\theta})\in D(\alpha_2,\lambda^{l/2})\big\}.$$
Using Lemma \ref{lem:coh2hypdifferent2}, 
\begin{align*}
\int_{[0,2\pi]\backslash A_{k,l}} |f\circ\varphi_2^{-l}(\varphi_1^k(e^{i\theta}))|^2\frac{d\theta}{2\pi}&\leq \int_{[0,2\pi]\backslash A_{k,l}}A^2 |\varphi_2^{-l}(\varphi_1^k(e^{i\theta}))-\beta_2|^2\frac{d\theta}{2\pi}\\
&\leq A^2 C^2 \lambda^l.
\end{align*}
Thus since $f$ is bounded on $\overline{\DD}$, we are reduced to prove that 
there exists $C_0>0$ such that, for all $k,l$, 
\begin{equation} \label{eq:lemcoh2hypdifferent5}
 meas(A_{k,l})\leq C_0\lambda^{l/2}.
\end{equation}
 Let $l_0$ be such that $\alpha_1\notin D(\alpha_2,\lambda^{l_0})$ and 
let us require $C_0\geq \lambda^{-l_0/2}$ so that \eqref{eq:lemcoh2hypdifferent5} is true for all $l\leq l_0$ and all $k\geq 0$. 
It remains to handle the case $l\geq l_0$. Assume first that $\varphi_1$ is not an automorphism. 
Then there exists $k_0>0$ such that, for all $k\geq k_0$, $\varphi_1^k(\TT)\cap D(\alpha_2,\lambda^{l_0/2})$ is empty. Therefore, \eqref{eq:lemcoh2hypdifferent5} is true for $k\geq k_0$
and $l\geq l_0$. On the other hand, for a fixed $k$, it is clear that there exists some $d_k$ such that $A_{k,l}$ has measure less than $d_k \lambda^{l/2}$ for all $l\geq l_0$. 
We conclude by setting $C_0=\max(\lambda^{-l_0/2},d_0,\dots,d_{k_0})$.

Assume now that $\varphi_1$ is a hyperbolic automorphism with $\alpha_1=1$ and $\beta_1=-1$. We move on $\CC_0$
where the model map $\psi_1$ of $\varphi_1$ is given by $\psi_1(w)=\mu w$ for some $\mu>1$ and we want to evaluate 
$$\minv\left(\left\{t\in\RR:\ \mu^k t\in [a-\kappa \lambda^{l/2},a+\kappa \lambda^{l/2}]\right\}\right)$$
where $a\neq 0$ and $\kappa>0$ only depend on $\alpha_2$. Then 
\begin{align*}
\minv\left(\left\{t\in\RR:\ \mu^k t\in [a-\kappa \lambda^{l/2},a+\kappa \lambda^{l/2}]\right\}\right)&=\frac 1\pi\arctan\left(\frac{a+\kappa\lambda^{l/2}}{\mu^k}\right)-\frac 1\pi\arctan\left(\frac{a-\kappa\lambda^{l/2}}{\mu^k}\right)\\
&\leq C_1\lambda^{l/2}
\end{align*}
since $\mu>1$. Finally, if $\varphi_1$ is a parabolic automorphism with $\alpha_1=1$, we repeat the argument with $\psi_1(w)=w-i\tau$ with $\tau>0$. We are reduced to estimate
\begin{align*}
&\minv\left(\left\{t\in\RR:\ t-k\tau \in [a-\kappa \lambda^{l/2},a+\kappa \lambda^{l/2}]\right\}\right)\\
&\quad\quad =\frac 1\pi\arctan\left(a+\kappa\lambda^{l/2}+k\tau\right)-\frac 1\pi\arctan\left(a-\kappa\lambda^{l/2}+k\tau\right)\\
&\quad\quad\leq C_2\lambda^{l/2}.
\end{align*}
\end{proof}

The case of parabolic linear fractional automorphisms $\varphi$ is more delicate, because provided $f\neq 0$, the series $\sum_n \|C_\varphi^n(f)\|$ is never convergent
(see  \cite[Remark 3.7]{BAYGRIJFA}). We will need to estimate directly the norm of sums like $\sum_{n\in A}C_\varphi^n(f)$. Our next lemmas are inspired by \cite{BAYTCL} and \cite{Ta04}. 
\begin{lemma}\label{lem:sumpara1}
Let $F:\overline{\CC_0}\to\CC$ be such that $|F(it)|\leq C/(1+t^2)$ for all $t\in\RR$ and let $\tau\in\RR^*$. Then there exists $M>0$ such that, for all $t\in\RR$, 
$$\sum_{n\in\mathbb Z} |F(it+in\tau)|\leq M.$$
\end{lemma}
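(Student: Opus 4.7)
The plan is straightforward and rests on a periodicity/Riemann-sum argument: the points $it+in\tau$ all lie on the imaginary axis (which is in $\partial\CC_0\subset\overline{\CC_0}$), so the hypothesis on $F$ applies directly. First I would reduce the statement to a purely real inequality by writing
$$\sum_{n\in\ZZ}|F(it+in\tau)|\;\leq\;C\sum_{n\in\ZZ}\frac{1}{1+(t+n\tau)^2}\;=:\;Cg(t),$$
so it suffices to show that $g$ is bounded on $\RR$.

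Next I would exploit the obvious $|\tau|$-periodicity of $g$: since $g(t+\tau)=g(t)$, it is enough to bound $g$ on the compact interval $[0,|\tau|]$. For $t$ in that interval, the term $n=0$ contributes at most $1$, and the terms $n=\pm 1$ contribute at most $2$ in total. For $|n|\geq 2$, a direct estimate gives $|t+n\tau|\geq (|n|-1)|\tau|$ (writing $t+n\tau\geq n\tau$ when $n\geq 2$, $\tau>0$, and $|t+n\tau|=|n\tau|-t\geq (|n|-1)|\tau|$ when $n\leq -2$, and symmetrically for $\tau<0$). Hence
$$g(t)\;\leq\;3+2\sum_{k\geq 1}\frac{1}{1+k^{2}\tau^{2}},$$
which is a finite constant depending only on $\tau$, yielding the required $M$.

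This is essentially a Riemann-sum comparison with $\int_{\RR}\frac{dx}{1+x^2}=\pi$, and there is no real obstacle — the only thing to be careful about is handling the $|n|\leq 1$ boundary terms separately rather than trying to absorb them into a single inequality, since the lower bound $|t+n\tau|\geq (|n|-1)|\tau|$ degenerates for $n=\pm 1$.
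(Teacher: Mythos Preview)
Your proof is correct and follows essentially the same approach as the paper: both reduce to bounding the real series $\sum_{n\in\ZZ}\frac{C}{1+(t+n\tau)^2}$ and use its $|\tau|$-periodicity to restrict $t$ to a compact interval. The only cosmetic difference is that the paper concludes by invoking uniform convergence and continuity of the resulting sum on $[0,\tau]$, whereas you give an explicit tail estimate; both are fine.
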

\begin{proof}
We just write 
\begin{align*}
\sum_{n\in\mathbb Z} |F(it+in\tau)|&\leq \sum_{n\in\mathbb Z}\frac C{1+(n\tau+t)^2}\\
&\leq \sup_{t'\in[0,\tau]}\sum_{n\in\mathbb Z}\frac C{1+(n\tau+t')^2}
\end{align*}
and the last sum defines, by uniform convergence, a continuous function on $[0,\tau]$. 
\end{proof}

We shall deduce the following substitute of Lemma \ref{lem:coh2hypsame}.

\begin{lemma}\label{lem:coh2parsame}
Let $F:\overline{\CC_0}\to\CC$ be such that $|F(it)|\leq C/(1+t^2)$ for all $t\in\RR$ and let $\tau\in\RR^*$. 
There exists a decreasing function $\veps:(0,+\infty)\to(0,+\infty)$ with $\lim_{+\infty}\veps=0$
 such that, for any $A\subset\NN$ finite, for any $a>0$, 
 $$\left\|\sum_{n\in A}F(\cdot+in\tau+ia)\right\|\leq\veps(a).$$
\end{lemma}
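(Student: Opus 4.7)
The plan is to linearize the squared sum using the uniform bound from Lemma \ref{lem:sumpara1}, compute the resulting single-term integrals by residues, and extract decay in $a$ from the arithmetic structure of $\{n\tau+a:n\in A\}$.

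For the linearization step, I would write $w=it$ so that $F(it+in\tau+ia)=F(i(t+n\tau+a))$, giving the pointwise bound $|F(i(t+n\tau+a))|\leq C/(1+(t+n\tau+a)^2)$. Lemma \ref{lem:sumpara1} applied to $F$ yields a constant $M>0$ with $\sum_{n\in\ZZ}|F(i(s+n\tau))|\leq M$ uniformly in $s$, so for any finite $A\subset\NN$ and any $a$ one has $\sum_{n\in A}|F(i(t+n\tau+a))|\leq M$. I would then exploit the elementary inequality $(\sum|c_n|)^2\leq M\sum|c_n|$ to linearize:
\[
\Bigl|\sum_{n\in A}F(i(t+n\tau+a))\Bigr|^2\leq M\sum_{n\in A}|F(i(t+n\tau+a))|.
\]

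Next I would integrate this against $d\minv(t)$, exchange the finite sum with the integral, and apply the pointwise decay of $F$ term by term:
\[
\Bigl\|\sum_{n\in A}F(\cdot+in\tau+ia)\Bigr\|^2\leq \frac{MC}{\pi}\sum_{n\in A}\int_\RR\frac{dt}{(1+t^2)(1+(t+n\tau+a)^2)}.
\]
A standard residue calculation (poles at $i$ and $-u+i$ in the upper half-plane) gives $\int_\RR\frac{dt}{(1+t^2)(1+(t+u)^2)}=\frac{2\pi}{u^2+4}$ for every $u\in\RR$, so
\[
\Bigl\|\sum_{n\in A}F(\cdot+in\tau+ia)\Bigr\|^2\leq 2MC\sum_{n\in A}\frac{1}{(n\tau+a)^2+4}.
\]

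Finally, after (by the orientation freedom of the parabolic automorphism model) assuming $\tau>0$, one has $n\tau+a\geq a$ for every $n\in\NN$, and the right-hand side is dominated uniformly in $A$ by
\[
\sigma(a):=\frac{1}{a^2+4}+\int_0^\infty\frac{dn}{(n\tau+a)^2+4}=\frac{1}{a^2+4}+\frac{1}{2\tau}\Bigl(\frac{\pi}{2}-\arctan\frac{a}{2}\Bigr),
\]
which is strictly decreasing, of order $1/a$, and tends to $0$ as $a\to\infty$. Setting $\veps(a):=\sqrt{2MC\,\sigma(a)}$ produces the required decreasing positive function with $\lim_{+\infty}\veps=0$. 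The main technical point is the linearization in step one: without converting $|\sum|^2$ into a sum of single terms, a naive $L^\infty$-in-$t$ estimate on $|\sum|$ together with Lemma \ref{lem:sumpara1} would only yield a constant bound on $\|\sum\|$ independent of $a$; it is the Poisson-type kernel $1/((1+t^2)(1+(t+u)^2))$, together with the uniform bound $M$, that converts the geometric sum into a genuinely decaying tail in $a$.
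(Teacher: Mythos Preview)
Your proof is correct and takes a genuinely different route from the paper's.

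The paper splits the integration domain at $t=-a/2$. On $\{t<-a/2\}$ it uses only the uniform bound $\sum_n|F(i(t+n\tau+a))|\leq M$ from Lemma~\ref{lem:sumpara1} together with the smallness of $\int_{-\infty}^{-a/2}\frac{dt}{\pi(1+t^2)}$; on $\{t\geq -a/2\}$ it observes that $t+n\tau+a\geq n\tau+a/2$, giving a pointwise bound $\sum_n|F(i(t+n\tau+a))|\leq\delta(a)$ that is uniform in $t$ and tends to $0$. Your argument instead never splits the domain: you convert $|\sum_n c_n|^2$ into $M\sum_n|c_n|$ via the trivial inequality $S^2\leq MS$ when $S\leq M$, and then evaluate each one-term integral $\int_\RR\frac{dt}{(1+t^2)(1+(t+u)^2)}=\frac{2\pi}{u^2+4}$ exactly. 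This is a clean trick and yields the explicit rate $\veps(a)\asymp a^{-1/2}$, which the paper's softer argument does not display. Conversely, the paper's method is less dependent on the precise kernel: it would go through unchanged with any decay $|F(it)|\leq C/(1+|t|)^{1+\eta}$, whereas your residue step would need to be redone. Both proofs, as written, tacitly use $\tau>0$ (your parenthetical ``orientation freedom'' remark); this is also implicit in the paper's bound $(t+n\tau+a)^2\geq(n\tau+a/2)^2$, and the lemma as literally stated fails for $\tau<0$ (take $A=\{\lfloor a/|\tau|\rfloor\}$), so this is an issue with the statement rather than with either proof, and is harmless in all applications.
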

\begin{proof}
Let $M$ be given by Lemma \ref{lem:sumpara1}. For $t\geq -a/2$, we also have
 $$\sum_{n\in\NN} |F(it+in\tau+ia)|\leq\sum_{n\in\NN}\frac{C}{1+(n\tau+a/2)^2}=:\delta(a)$$
 where $\delta(a)$ decreases to zero as $a$ tends to $+\infty$. We then deduce that
 $$\left\|\sum_{n\in A}F(\cdot+in\tau+ia)\right\|^2\leq \int_{-\infty}^{-a/2} M^2\frac{dt}{\pi(1+t^2)}+\int_{-\infty}^{+\infty}\delta^2(a)\frac{dt}{\pi(1+t^2)}$$
 and the right hand side decreases clearly to $0$ as $a$ goes to $+\infty$.
\end{proof}
\begin{remark}\label{rem:coh2parsame}
 In particular, applying the lemma to $A'=A-\min(A)$ and $a=\min(A)\tau$, we get 
  $$\left\|\sum_{n\in A}F(\cdot+in\tau)\right\|\leq\veps(\min(A)).$$
\end{remark}


\subsection{Hyperbolic linear fractional maps with the same attractive fixed point}

We start the proof of Theorem \ref{thm:coh2} by studying the case where $\varphi_1$ and $\varphi_2$ are hyperbolic with the same attractive fixed point.

\begin{theorem}\label{thm:coh2hypsame}
Let $\varphi_1,\ \varphi_2\in \lfm$ be hyperbolic with the the same attractive fixed point $\alpha$. 
Assume that $\varphi_1'(\alpha)\neq \varphi_2'(\alpha)$. Then $C_{\varphi_1}$ and $C_{\varphi_2}$ are disjointly frequently hypercyclic on $H^2(\DD)$. 
\end{theorem}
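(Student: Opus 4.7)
The plan is to apply the criterion of Theorem \ref{thm:dfhcc} with the sequence $n_k=k$, which has lower density $1$. After conjugating by the Cayley transform, we may assume both attractive fixed points coincide with $\alpha=1$ and work with the half-plane models $\psi_j(w)=\lambda_j(w-b_j)+b_j$, where $\lambda_j=1/\varphi_j'(1)>1$ and $\Re e(b_j)\in(-1,0]$; without loss of generality $\lambda_1<\lambda_2$. For the dense subset $Y\subset H^2(\mathbb D)\times H^2(\mathbb D)$ I take pairs $(y_1,y_2)$ of polynomials vanishing to a sufficiently high (fixed) order at $1$, $\beta_1$ and $\beta_2$, which is dense by Lemma \ref{lem:denseh2easy}. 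The approximate inverse is
$$S_k(y_1,y_2)=y_1\circ\varphi_1^{-k}+y_2\circ\varphi_2^{-k},$$
which is well defined since each $\varphi_j^{-k}$ is holomorphic on an open disk containing $\overline{\mathbb D}$.

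For (C4), $C_{\varphi_j}^kS_k(y_1,y_2)=y_j+y_{3-j}\circ(\varphi_{3-j}^{-k}\circ\varphi_j^k)$; in the half-plane the cross term is a function of the paper's $F_{3-j}$ form, evaluated at an affine map with scaling $\gamma=(\lambda_j/\lambda_{3-j})^k$ and translation $\kappa=\lambda_{3-j}^{-k}$. Lemma \ref{lem:coh2hypsame}(b) (when $j=1$, so $\gamma<1$) and Lemma \ref{lem:coh2hypsame}(a) (when $j=2$, so $\gamma>1$) each yield an exponentially decreasing bound in $k$. Similarly, $\|y_j\circ\varphi_j^{-k}\|^2\lesssim\lambda_j^{-k/2}$ by Lemma \ref{lem:coh2hypsame}(b), whence $\sum_k\|S_k y\|<\infty$ and (C1) follows from Lemma \ref{lem:uniformunconditional}.

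The bulk of the work lies in verifying (C2) and (C3). These require uniform-in-$k$ unconditional convergence of the cross-series $\sum_l y_i\circ\varphi_i^{-n_{k\pm l}}\circ\varphi_j^{n_k}$; on the half-plane each term corresponds to an affine map with scaling parameter $\gamma_l(k)=(\lambda_j/\lambda_i)^{n_k}\lambda_i^{\mp(n_{k\pm l}-n_k)}$, which, depending on the choice of $(i,j)$ and $\pm$, crosses $1$ at some $l^*(k)$ growing linearly in $k$. The natural strategy is to split the series at $l^*(k)$ and apply Lemma \ref{lem:coh2hypsame}(a) below and (b) above, so that each half becomes a geometric series peaking at the transition. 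The main obstacle I anticipate is that the bound given by the simple-zero version of Lemma \ref{lem:coh2hypsame} is only of order $1$ in the transition regime $\gamma_l(k)\sim 1$, giving a bounded but not tail-small contribution. I plan to overcome this by sharpening the cross-term estimates using higher-order vanishing of $y_i$ at $1$, $\beta_1$, $\beta_2$: a refinement of Lemma \ref{lem:coh2hypsame} with $d$-order zeros yields bounds of the form $\|\cdot\|^2\lesssim\gamma^d+\kappa^{2d}$ (resp.\ $\gamma^{-d}$), which for $d$ large enough make the transition contribution geometrically small. Combined with Lemma \ref{lem:uniformunconditional}(a), this delivers the required uniform tail estimates and completes the verification of the criterion.
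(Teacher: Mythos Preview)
Your proposal has a genuine gap in the verification of (C2) and (C3), and higher-order vanishing does not rescue it. Consider the cross term arising in $C_{\psi_2}^{k}S_{k+l}$ (with $n_k=k$): in the half-plane it equals
\[
F\bigl(\lambda^{rk-(k+l)}(w-b_2)+\lambda^{-(k+l)}(b_2-b_1)+b_1\bigr),
\]
where $\mu=\lambda^r$ and $F=y_1\circ\mathcal C^{-1}$. The scaling parameter is $\gamma=\lambda^{(r-1)k-l}$, and the transition $\gamma\approx 1$ occurs at $l\approx(r-1)k$. At such an $l$ (and there is always an integer $l$ with $|\gamma-1|$ bounded, for infinitely many $k$) the affine map reduces to $w\mapsto w+(b_1-b_2)+o(1)$, so the corresponding term has $\mathcal H^2$-norm essentially equal to $\|F(\cdot+b_1-b_2)\|$, a fixed positive constant independent of $k$ and of the order of vanishing $d$. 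Your claimed refinement $\|\cdot\|^2\lesssim\gamma^d+\kappa^{2d}$ gives only $\lesssim 1$ when $\gamma=1$, so raising $d$ cannot push this term below a fixed $\veps$. Uniform unconditional convergence requires that for every $\veps>0$ there is $N$ with $\bigl\|\sum_{l\in F}\cdots\bigr\|<\veps$ for all $k$ and all finite $F\subset[N,\infty)$; taking $F=\{\lfloor(r-1)k\rfloor\}$ for large $k$ violates this.

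The paper avoids this obstacle not by strengthening the pointwise estimate but by choosing $(n_k)$ differently: it sets $(n_k)=\NN\cap\bigcup_{p\geq 0}(\omega^p,s\omega^p)$ with $s>1$, $\omega>1$, $r-s>0$ and $rs<\omega$. For this sequence the quantity $rn_k-n_{k+l}$ is forced to be either $\geq(r-s)\omega^p$ or $\leq-(\omega-rs)\omega^p$ (where $n_k\in(\omega^p,s\omega^p)$), hence $|\gamma|$ is uniformly far from $1$ and the transition regime never occurs. This structural choice of $(n_k)$ is the essential idea you are missing; without it, Lemma~\ref{lem:coh2hypsame} (even in a higher-order version) cannot yield the uniform tail bound that (C2)/(C3) demand.
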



\begin{proof}[Proof of Theorem \ref{thm:coh2hypsame}]
Without loss of generality, we may assume that $\alpha=1$, so that the half-plane models of $\varphi_1$ and $\varphi_2$ are respectively 
$$\psi_1(w)=\lambda(w-b_1)+b_1,\ \psi_2(w)=\mu(w-b_2)+b_2$$
with $\lambda,\mu>1$, $b_1,b_2\in(-1, 0]$. Since $\lambda\neq \mu$, we may assume $\mu=\lambda^r$ for some $r>1$. 
 Let $\beta_1=\mathcal C^{-1}(b_1)$, $\beta_2=\mathcal C^{-1}(b_2)$ and let 
$$\mathcal D=\left\{P\circ\mathcal C^{-1}:\ P\textrm{ holomorphic polynomial with }P(1)=P(\beta_1)=P(\beta_2)=0\right\}.$$
$\mathcal D$ is dense in $\mathcal H^2$. 
 Let $\omega>1$, $s>1$ with $rs<\omega$ and $r-s>0$. Define $(n_k)=\NN\cap \bigcup_{p\geq 0} (\omega^p,s\omega^p)$ and
 observe that $n_k\leq Ck$ for some $C> 0$.
 For $(F,G)\in\mathcal D^2$, we set 
 $$S_{n_k}(F,G)=F\left({\lambda^{-n_k}}(w-b_1)+b_1\right)+G\left(\mu^{-n_k}(w-b_2)+b_2\right).$$
 We aim to prove that the assumptions of Theorem \ref{thm:dfhcc} are satisfied.
 Lemma \ref{lem:coh2hypsame} implies immediately the unconditional convergence of $\sum_k S_{n_k}(F,G)$. Next let $k\geq 0$, $l\geq 0$ and let us observe that 
$$C_{\psi_1}^{n_k}S_{n_{k+l}}(F,G)=F\left(\lambda^{n_k-n_{k+l}}(w-b_1)+b_1\right)
+G\left(\lambda^{n_k-rn_{k+l}}(w-b_1)+\lambda^{-rn_{k+l}}(b_1-b_2)+b_2\right).$$
Since $n_k-rn_{k+l}\leq -l$, Lemma \ref{lem:coh2hypsame}, (b) implies that
$$\|C_{\psi_1}^{n_k}S_{n_{k+l}}(F,G)\|\lesssim \lambda^{-l/4}$$
which implies the unconditional convergence, uniformly in $k$, of $\sum_l C_{\psi_1}^{n_k}S_{n_{k+l}}(F,G)$.
This is the easiest part of (C2) where we do not need our particular choice of $(n_k)$. For the second half of the proof of (C2), we write
$$C_{\psi_2}^{n_k}S_{n_{k+l}}(F,G)=F\left(\lambda^{rn_k-n_{k+l}}(w-b_2)+\lambda^{-n_{k+l}}(b_2-b_1)+b_1\right)+G\left(\lambda^{r(n_k-n_{k+l})}(w-b_2)+b_2\right).$$
The difficulty here is to show the unconditional convergence, uniformly in $k$, of $$\sum_l F\left(\lambda^{rn_k-n_{k+l}}(w-b_2)+\lambda^{-n_{k+l}}(b_2-b_1)+b_1\right).$$
The idea behind the proof is that our choice of $(n_k)$ ensures that $rn_k-n_{k+l}$ is either sufficiently small or sufficiently big. Precisely, for $k\geq 0$, let 
 \begin{align*}
  p_0(k)&=\inf\left\{rn_k-n_{k+l}:\ l\geq 0,\ rn_k-n_{k+l}\geq 0\right\}\\
  p_1(k)&=\inf\left\{n_{k+l}-rn_k:\ l\geq 0,\ n_{k+l}-rn_k\geq 0\right\}\\
 \end{align*}
Lemma \ref{lem:coh2hypsame} yields that
$$\sum_{l}\left\|F\left(\lambda^{rn_k-n_{k+l}}(w-b_2)+\lambda^{-n_{k+l}}(b_2-b_1)+b_1\right)\right\|\leq \sum_{p\geq p_0(k)}\frac{C}{\lambda^{p/4}}+\sum_{p\geq p_1(k)}\frac{C}{\lambda^{p/4}}.$$
We claim that $p_0(k)$ and $p_1(k)$ go to $+\infty$ when $k$ goes to $+\infty$. Indeed, let $p$ be such that $n_k\in [\omega^p,s\omega^p]$
and assume that $rn_k-n_{k+l}\geq 0$. Let $q$ be such that $n_{k+l}\in [\omega^q,s\omega^q]$. Then $p\geq q$: indeed, $rs\omega^{p}-\omega^{p+1}<0$.
Thus, $rn_k-n_{k+l}\geq r\omega^p-s\omega^p=(r-s)\omega^p$ which is large if $k$ is large since $r-s>0$. Symmetrically, if $rn_{k}-n_{k+l}\leq 0$, 
then one shows that $q>p$ which implies $n_{k+l}-rn_k\geq (\omega-rs)\omega^p$. 

Therefore we have shown the existence of $(\veps_n)\in c_0(\mathbb N)$ such that, for all $k\geq 0$,
$$\sum_{l=0}^{+\infty}\|C_{\psi_2}^{n_k}S_{n_{k+l}}(F,G)\|<\veps_k.$$
This yields the unconditional convergence uniformly in $k$. 

The unconditional convergence uniformly in $k$ of $\sum_{l=0}^k C_{\psi_1}^{n_k}S_{n_{k-l}}(F,G)$ and $\sum_{l=0}^k C_{\psi_2}^{n_k}S_{n_{k-l}}(F,G)$
can be proved in a completely similar fashion (the former case being the most difficult one). Finally,
$$C_{\psi_1}^{n_k}S_{n_k}(F,G)=F(w)+G\left(\lambda^{(1-r)n_k}(w-b_1)+\lambda^{-rn_k}(b_1-b_2)+b_2\right)\to_{k\to+\infty}F$$
whereas similarly $C_{\psi_2}^{n_k}S_{n_k}(F,G)\to G$ (one may for instance again apply Lemma \ref{lem:coh2hypsame}).
\end{proof}


\subsection{Hyperbolic linear fractional maps with different attractive points}

We now investigate the case of two hyperbolic maps with different attractive fixed points. 

\begin{theorem}\label{thm:coh2hypdifferent}
Let $\varphi_1,\ \varphi_2\in \lfm$ be hyperbolic with different attractive fixed points.  Then $C_{\varphi_1}$ and $C_{\varphi_2}$ are disjointly frequently hypercyclic on $H^2(\DD)$. 
\end{theorem}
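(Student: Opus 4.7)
The plan is to apply Theorem \ref{thm:dfhcc} with $n_k=k$, which has positive lower density. Write $\alpha_i,\beta_i$ for the attractive and repulsive fixed points of $\varphi_i$, $\Delta_i$ for its automorphism domain, and $\lambda_i=\varphi_i'(\alpha_i)\in(0,1)$. Applying Lemma \ref{lem:coh2hypdifferent3} to $\varphi_1$ with $\gamma=\alpha_2$, order $d=1$, and the $\xi$'s chosen among $\{\alpha_1,\alpha_2,\beta_1,\beta_2\}$ (those not in $\DD$), we obtain a dense subset $\mathcal D_1\subset H^2(\DD)$ of functions $F\in H^\infty(\Delta_1)$ vanishing at each of $\alpha_1,\alpha_2,\beta_1,\beta_2$ and, crucially, at every backward iterate $\varphi_1^{-m}(\alpha_2)$; symmetrically the same lemma applied to $\varphi_2$ with $\gamma=\alpha_1$ yields $\mathcal D_2$. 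For $(F,G)\in\mathcal D_1\times\mathcal D_2$ set
\[S_k(F,G)=F\circ\varphi_1^{-k}+G\circ\varphi_2^{-k},\]
which is bounded and holomorphic on $\overline{\DD}\subset\Delta_1\cap\Delta_2$, hence lies in $H^2(\DD)$.

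Conditions (C1), (C2), (C4) follow readily from the tools already developed. For (C4), $C_{\varphi_1}^k S_k(F,G)=F+G\circ\varphi_2^{-k}\circ\varphi_1^k$; since $G(\beta_2)=0$ with linear control near $\beta_2$, Lemma \ref{lem:coh2hypdifferent5} gives $\|G\circ\varphi_2^{-k}\circ\varphi_1^k\|\lesssim\lambda_2^{k/2}\to 0$, and the case $j=2$ is symmetric. For (C1), the vanishing $F(\beta_1)=0$ together with Lemma \ref{lem:coh2hypdifferent2} (placing $\varphi_1^{-k}(e^{i\theta})$ in $D(\beta_1,C\lambda_1^{k/2})$ outside an arc of measure $\lesssim\lambda_1^{k/2}$ controlled by Lemma \ref{lem:coh2hypdifferent4}) yields geometric decay of $\|F\circ\varphi_1^{-k}\|$. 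For (C2) with $j=1$, the expansion $C_{\varphi_1}^k S_{k+l}(F,G)=F\circ\varphi_1^{-l}+G\circ\varphi_2^{-(k+l)}\circ\varphi_1^k$ has the first summand decaying geometrically in $l$ independently of $k$ (by the (C1)-style estimate) and the second bounded by $\lambda_2^{(k+l)/2}$ via Lemma \ref{lem:coh2hypdifferent5}; Lemma \ref{lem:uniformunconditional}(a) then applies, and the case $j=2$ is symmetric.

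The main obstacle is condition (C3). Write $C_{\varphi_1}^k S_{k-l}(F,G)=F\circ\varphi_1^l+G\circ\varphi_2^{-(k-l)}\circ\varphi_1^k$. For the first summand, $F(\alpha_1)=0$ combined with the half-plane model of $\varphi_1$ (writing $F\circ\mathcal C^{-1}(w)=-2R(\mathcal C^{-1}(w))/(w+1)$ and using $|\psi_1^l(it)+1|^{-1}\lesssim\lambda_1^l$) yields $\|F\circ\varphi_1^l\|\lesssim\lambda_1^{l/2}$, which is summable in $l$ uniformly in $k$. The genuine difficulty lies in the second summand: the Lemma \ref{lem:coh2hypdifferent5} bound $\lambda_2^{(k-l)/2}$ is close to $1$ when $k-l$ is small, so summing over the tail $l\in F\cap[N,k]$ (which corresponds to $k-l\in[0,k-N]$) cannot be made small from that estimate alone. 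This is where Lemma \ref{lem:coh2hypdifferent3} is essential. The strong vanishing $G(\varphi_2^{-m}(\alpha_1))=0$ for every $m\ge 1$, combined with estimate (a) of Lemma \ref{lem:coh2hypdifferent3} and Lemma \ref{lem:coh2hypdifferent1} applied at $\xi=\alpha_1\in\TT\setminus\{\alpha_2,\beta_2\}$, makes the factors $\lambda_2^{\mp m}$ cancel and produces the uniform-in-$m$ bound
\[|G(\varphi_2^{-m}(z))|\lesssim|z-\alpha_1|\qquad\text{for }z\in\overline{\DD}\text{ in a fixed neighborhood of }\alpha_1.\]
Coupling this with Lemma \ref{lem:coh2hypdifferent4} to control the exceptional set of $\theta$ for which $\varphi_1^k(e^{i\theta})$ escapes this neighborhood, and with the half-plane estimate $\|(\cdot-\alpha_1)\circ\varphi_1^k\|\lesssim\lambda_1^{k/2}$, yields $\|G\circ\varphi_2^{-m}\circ\varphi_1^k\|\lesssim\lambda_1^{k/4}$ uniformly in $m\in[0,k]$. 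Summing over $l\in[0,k]$ gives a partial sum bounded by $(k+1)C\lambda_1^{k/4}$, a $c_0(\NN)$ sequence in $k$, so the second option of Lemma \ref{lem:uniformunconditional}(b) applies. Condition (C3) for $j=2$ is symmetric, which completes the verification of all the hypotheses of Theorem \ref{thm:dfhcc}. The degenerate cases $\alpha_1=\beta_2$ (or $\alpha_2=\beta_1$), where Lemma \ref{lem:coh2hypdifferent3} is not directly applicable with the above choice of $\gamma$, are handled separately: near such a coincidence $\varphi_2^{-m}$ is already a strict contraction toward $\beta_2=\alpha_1$, so a polynomial $G$ vanishing at $\alpha_1$ yields the required uniform bound without invoking Lemma \ref{lem:coh2hypdifferent3}.
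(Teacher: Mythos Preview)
Your proof is correct and follows essentially the same route as the paper's: apply Theorem \ref{thm:dfhcc} with $n_k=k$, build the dense sets via Lemma \ref{lem:coh2hypdifferent3}, dispatch (C1), (C2), (C4) with Lemmas \ref{lem:coh2hypsame} and \ref{lem:coh2hypdifferent5}, and isolate the hard term $g\circ\varphi_2^{-(k-l)}\circ\varphi_1^k$ in (C3), obtaining a bound $C\rho^k$ uniform in $l$ by combining Lemmas \ref{lem:coh2hypdifferent1}, \ref{lem:coh2hypdifferent4} and \ref{lem:coh2hypdifferent3}, with the degenerate coincidence $\alpha_1=\beta_2$ handled separately. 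The only noticeable variation is that you work with a fixed neighborhood of $\alpha_1$ and the Lipschitz-type bound $|G(\varphi_2^{-m}(w))|\lesssim|w-\alpha_1|$, then invoke $\|\varphi_1^k-\alpha_1\|_2\lesssim\lambda_1^{k/2}$, whereas the paper uses a shrinking neighborhood $D(\alpha_1,\mu^k)$ with $\mu\in(\lambda_1,1)$; both give the same geometric decay in $k$ uniformly in $m$, and both feed into the second clause of Lemma \ref{lem:uniformunconditional}(b) in the same way.
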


\begin{proof}[Proof of Theorem \ref{thm:coh2hypdifferent}]
Let $\alpha_1,\alpha_2$ (resp. $\beta_1,\beta_2$) be the attractive (resp. repulsive) fixed points of $\varphi_1$ and $\varphi_2$, $\lambda_1=\varphi_1'(\alpha_1)$, $\lambda_2=\varphi'_2(\alpha_2)$.
We define two dense sets $\mathcal D_1$ and $\mathcal D_2$ of $H^2(\DD)$ as follows: if $\alpha_1=\beta_2$, then $\mathcal D_2$ is the set of holomorphic polynomials vanishing at
$\alpha_2$ and $\beta_2$;
if $\alpha_1\neq\beta_2$, then $\mathcal D_2$ is the set of functions defined by Lemma \ref{lem:coh2hypdifferent3}
with $\varphi=\varphi_2$, $\gamma=\alpha_1$, $d=1$, adding to the sequence of zeros the points $\alpha_2$ and $\beta_2$.
$\mathcal D_1$ is defined accordingly, exchanging the roles played by $\varphi_1$ and $\varphi_2$. 

Let $(n_k)=(k+1)$ and let $(f,g)\in\mathcal D_1\times\mathcal D_2$. Define
$$S_{n_k}(f,g)=f\circ\varphi_1^{-n_k}+g\circ \varphi_2^{-n_k}.$$
 The unconditional convergence of $\sum_k S_{n_k}(f,g)$ follows easily from Lemma \ref{lem:coh2hypsame} or from Lemma \ref{lem:coh2hypdifferent5}. Moreover,
$$C_{\varphi_1}^{n_k}S_{n_{k+l}}(f,g)=f\circ \varphi_1^{-(n_{k+l}-n_k)}+g\circ \varphi_2^{-n_{k+l}}\circ\varphi_1^{n_k}.$$
Again, the unconditional convergence, uniformly in $k$, of $\sum_l C_{\varphi_1}^{n_k}S_{n_{k+l}}(f,g)$ is a consequence of Lemma \ref{lem:coh2hypdifferent5}. 
Let us now consider, for $l=0,\dots,k$, 
$$C_{\varphi_1}^{n_k}S_{n_{k-l}}(f,g)=f\circ \varphi_1^{n_k-(n_{k-l})}+g\circ\varphi_2^{-n_{k-l}}\circ\varphi_1^{n_k}.$$
The unconditional convergence, uniformly in $k$, of $\sum_{l=0}^k f\circ\varphi_1^{n_k-(n_{k-l})}$ is easy to prove using Lemma \ref{lem:coh2hypsame}. The real issue is the proof of the unconditional convergence, uniformly in $k$, of $\sum_{l=0}^k g\circ\varphi_2^{-(n_{k-l})}\circ\varphi_1^{n_k}$. 
The easy case is when $\alpha_1=\beta_2$, since then $\alpha_1$ is the attractive fixed point of both $\varphi_1$ and $\varphi_2^{-1}$. As above Lemma \ref{lem:coh2hypsame} yields 
$\|g\circ\varphi_2^{-q}\circ\varphi_1^p\|\leq C\lambda_2^{q/4}\lambda_1^{p/4}$,
which in turn yields the unconditional convergence uniformly in $k$ of the above series. 

Let us assume now that $\alpha_1\neq\beta_2$.
For $l\geq 1$ and $k\geq 1$, we evaluate $\|g\circ\varphi_2^{-l}\circ\varphi_1^{k}\|^2$ as follows:
we fix $\mu\in(\lambda_1,1)$ and consider $A_k=\{z\in\TT:\ \varphi_1^k(z)\notin D(\alpha_1,\mu^k)\}$. 
By Lemma \ref{lem:coh2hypdifferent4}, $A_k$ has measure less that $C(\lambda_1/\mu)^k$, with $\lambda_1/\mu \in(0,1)$ and we know that $g$ is bounded on the automorphism domain of $\varphi_2$. 
Therefore, 
$$\int_{A_k}|g\circ\varphi_2^{-l}\circ\varphi_1^k|^2 \leq C\left(\frac{\lambda_1}\mu\right)^k.$$

On the other hand, if $z\in\TT$ does not belong to $A_k$, then $\varphi_1^k(z)\in D(\alpha_1,\mu^k)$ so that, by Lemma \ref{lem:coh2hypdifferent1}, there 
exists $C\geq 1$ such that, for all $k,l\geq 1$, 
$\varphi_2^{-l}\circ\varphi_1^k(z)\in D(\gamma_l,C\mu^k \lambda_2^l)$ where we recall that $\gamma_l=\varphi_2^{-l}(\alpha_1)$. Therefore the properties of $g$ ensure that
$$|g\circ\varphi_2^{-l}\circ\varphi_1^k(z)|^2\leq C\mu^k,$$
an inequality which is true provided $\varphi_2$ is an automorphism or not.
Combining the two cases, we therefore have obtained 
\begin{equation}
 \|g\circ \varphi_2^{-l}\circ\varphi_1^k\|_2\leq C\rho^k \label{eq:thmcoh2hypdifferent}
\end{equation}
for some $\rho\in(0,1)$, with $C$ and $\rho$ which do not depend on $k$ and $l$. This yields the desired unconditional convergence of the involved series.
We conclude by symmetry of $\varphi_1$ and $\varphi_2$, and because (C4) follows also from \eqref{eq:thmcoh2hypdifferent}.
\end{proof}


\subsection{Parabolic automorphisms with the same attractive fixed point}
We turn to hyperbolic automorphisms, first with the same attractive fixed point.

\begin{theorem}\label{thm:coh2parsame}
 Let $\varphi_1,\varphi_2\in\aut$ be parabolic with the same attractive fixed point. Assume that $\varphi_1\neq\varphi_2$. Then $C_{\varphi_1}$ and $C_{\varphi_2}$ are 
 disjointly frequently hypercyclic on $H^2(\DD)$.
\end{theorem}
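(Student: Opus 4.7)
The plan is to apply Theorem \ref{thm:dfhcc}. Via the Cayley transform the half-plane models become $\psi_1(w) = w + i\tau_1$, $\psi_2(w) = w + i\tau_2$ with $\tau_1, \tau_2 \in \RR^*$ and $\tau_1 \neq \tau_2$ (the latter because $\varphi_1 \neq \varphi_2$). I would take $(n_k) = (k)$, which has full lower density, and as dense subset $\mathcal{D} \subset \mathcal{H}^2$ the Cayley pullbacks of holomorphic polynomials with a zero of order at least $2$ at the common attractive fixed point; density follows from Lemma \ref{lem:denseh2easy}, and any $F \in \mathcal{D}$ satisfies $|F(it)| \lesssim 1/(1+t^2)$ (since $\mathcal{C}^{-1}(it) - 1 = -2/(it+1)$), which is precisely the decay hypothesis of Lemmas \ref{lem:sumpara1} and \ref{lem:coh2parsame}. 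I would set
$$S_{n_k}(F, G) := C_{\psi_1}^{-n_k} F + C_{\psi_2}^{-n_k} G = F(\,\cdot - i n_k \tau_1) + G(\,\cdot - i n_k \tau_2).$$

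Expanding each series in (C1)--(C4) by linearity produces two types of sums. The \emph{diagonal} sums, like $\sum_l F(\,\cdot - il\tau_1)$ arising from $C_{\psi_1}^{n_k} S_{n_{k\pm l}}$, are $k$-independent; Remark \ref{rem:coh2parsame} bounds their tails by $\veps(\min(\cdot)\cdot|\tau_1|)$, giving uniform unconditional convergence through Lemma \ref{lem:uniformunconditional}(a), third bullet. The \emph{cross} sums, like $\sum_l G(\,\cdot + ik(\tau_1 - \tau_2) - il\tau_2)$ arising from $C_{\psi_1}^{n_k}$ applied to the $G$-component, have an ``offset'' $k(\tau_1 - \tau_2)$ that grows linearly in $k$ precisely because $\tau_1 \neq \tau_2$; Lemma \ref{lem:coh2parsame} then bounds every partial sum uniformly by $\veps(|k(\tau_1 - \tau_2)|)$, which tends to zero, yielding uniform unconditional convergence via Lemma \ref{lem:uniformunconditional}(a), fourth bullet. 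Condition (C4) follows from the same computation, since $C_{\psi_i}^{n_k} S_{n_k}(F, G)$ equals $F$ (or $G$) plus a single translate of the other component whose $\mathcal{H}^2$-norm vanishes as $k \to \infty$.

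The main technical obstacle is sign bookkeeping: Lemma \ref{lem:coh2parsame} is cleanest when the translation parameter $\tau$ and offset $a$ are simultaneously positive, whereas the effective signs in our sums depend on $\operatorname{sgn}(\tau_1)$ and $\operatorname{sgn}(\tau_2)$. I would handle this by splitting into the cases $\tau_1, \tau_2$ of the same sign versus opposite signs; in the opposite-sign case the cross-term offset $k(\tau_1 - \tau_2)$ is a sum of terms of the same sign and Lemma \ref{lem:coh2parsame} applies directly, while in the same-sign case (say both positive with $\tau_1 < \tau_2$) the translates $v_l = k(\tau_1 - \tau_2) - l\tau_2$ form a monotone sequence moving away from $0$, so one reduces to a mirror version of Lemma \ref{lem:coh2parsame} (obtained by exchanging the upper and lower half-planes, equivalently by passing through the antilinear $\mathcal{H}^2$-isometry $F \mapsto \overline{F(\overline{\,\cdot\,})}$). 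No lacunary choice of $(n_k)$ is needed here, in contrast to Theorem \ref{thm:coh2hypsame}, because the dispersion between the two cross-term directions comes from the additive difference $\tau_1 - \tau_2 \neq 0$ rather than from a ratio of exponential rates.
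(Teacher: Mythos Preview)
Your proposal has a genuine gap in the same-sign case, and your final claim that ``no lacunary choice of $(n_k)$ is needed'' is precisely where it fails.

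You verify only the cross sum $\sum_l G(\,\cdot + ik(\tau_1-\tau_2) - il\tau_2)$ coming from $C_{\psi_1}^{n_k}$ applied to the $G$-component. With $0<\tau_1<\tau_2$, these shifts are indeed all negative and monotone, so a mirrored Lemma~\ref{lem:coh2parsame} applies. But condition~(C2) must also be checked for $C_{\psi_2}^{n_k}$ applied to the $F$-component, which gives
\[
\sum_{l\ge 0} F\bigl(\,\cdot + i[k(\tau_2-\tau_1) - l\tau_1]\bigr).
\]
Here the shift $k(\tau_2-\tau_1)-l\tau_1$ starts positive at $l=0$ and decreases through $0$ at $l\approx k(\tau_2/\tau_1-1)$. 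Given any $N\ge 1$, choose $k$ so large that $l_0:=\lfloor k(\tau_2/\tau_1-1)\rfloor\ge N$; then the singleton $A=\{l_0\}\subset[N,\infty)$ produces a term with shift of size $<\tau_1$, whose $\mathcal H^2$-norm stays bounded below by a positive constant depending only on $F$. Uniform unconditional convergence in $k$ therefore fails with $(n_k)=(k)$. The same obstruction appears in the (C3) sum $\sum_{l=0}^k G(\,\cdot+i[k(\tau_1-\tau_2)+l\tau_2])$.

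This is exactly the difficulty the paper isolates: when $\tau_1,\tau_2$ have the same sign, $\psi_2^{n_k}$ and $\psi_1^{-n_{k+l}}$ can nearly cancel. The paper resolves it by taking $(n_k)=\NN\cap\bigcup_{p\ge 0}(\omega^p,s\omega^p)$ with $s<r=\tau_2/\tau_1$ and $rs<\omega$; then $|rn_k-n_{k+l}|$ is forced to be large for every $k,l$, and one can split the sum into the parts with $rn_k-n_{k+l}\ge 0$ and $<0$, each handled by Lemma~\ref{lem:coh2parsame}. Your opposite-sign analysis is correct and coincides with the paper's, but the same-sign case genuinely requires the lacunary sequence (or some equivalent device) and your argument does not supply one.
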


The proof will be inspired by that of Theorem \ref{thm:coh2hypsame} because we have to face the same difficulty: it is possible that $\varphi_1^k$ and $\varphi_2^{-l}$ compensate, namely that $\varphi_1^k\circ\varphi_2^{-l}$ is close to the identity map. We will have
to exclude this possibility by choosing for $(n_k)$ a subsequence of the whole sequence of integers. 

\begin{proof}[Proof of Theorem \ref{thm:coh2parsame}]
 Let us move to the half-plane and define $\psi_1(w)=w+i\tau_1$, $\psi_2(w)=w+i\tau_2$, $\tau_1\neq\tau_2\in\mathbb R^*$. 
 The proof depends on the sign of the product $\tau_1\tau_2$. If it is negative, namely if $\tau_1$ and $\tau_2$ have opposite signs,
 it is simpler. Let us start with this case, say $\tau_2<0<\tau_1$. We set 
 $$\mathcal D=\{P\circ\mathcal C^{-1}:\ P\textrm{ is a holomorphic polynomial},\ P(1)=P'(1)=0\}$$
 and pick $(F,G)$ in $\mathcal D^2$. Define 
 $$S_k(F,G)=F(w-ik\tau_1)+G(w-ik\tau_2).$$
 The unconditional convergence of $\sum_k S_k(F,G)$ comes from Lemma \ref{lem:coh2parsame} (and its consequence, Remark \ref{rem:coh2parsame}). For $k,l\geq 0$, 
 one has
 $$C_{\psi_1}^k S_{k+l}(F,G)(w)=F(w-il\tau_1)+G(w+ik\tau_1-i(k+l)\tau_2).$$
 The unconditional convergence uniformly in $k$ of $\sum_l G(w+ik\tau_1-i(k+l)\tau_2)$ follows again from Lemma \ref{lem:coh2parsame},
 writing
 $$k\tau_1-(k+l)\tau_2=k(\tau_1-\tau_2)-l\tau_2$$
 with $k(\tau_1-\tau_2)>0$. The remaining conditions of Theorem \ref{thm:dfhcc} are proved in the same way.
 
 Let us now assume that $\tau_1$ and $\tau_2$ have the same sign, for instance $\tau_2>\tau_1>0$ and write $\tau_2=r\tau_1$. To 
 simplify the notations, we simply denote $\tau$ for $\tau_1$. As in the proof of Theorem \ref{thm:coh2hypsame}, let $\omega,s>1$
 with $rs<\omega$ and $r-s>0$ and define $(n_k)=\mathbb N\cap \bigcup_{p\geq 0}(\omega^p, s\omega^p)$. For $(F,G)\in\mathcal D^2$, let
 $$S_{n_k}(F,G)(w)=F(w-in_k\tau)+G(w-in_kr\tau).$$
 The unconditional convergence of $\sum_k S_{n_k}(F,G)$ follows again from Lemma \ref{lem:coh2parsame}. For $k,l\geq 0$, we observe that
  $$C_{\psi_1}^{n_k} S_{n_{k+l}}(F,G)(w)=F(w+i(n_k-n_{k+l})\tau)+G(w+i(n_k-rn_{k+l})\tau).$$
  The unconditional convergence, uniformly in $k$, of $\sum_l C_{\psi_1}^{n_k} S_{n_{k+l}}(F,G)$ can be proved without
  any extra difficulties, because $r>1$ and $n_k-rn_{k+l}<c(k+l)$ for some $c<0$. On the contrary 
    $$C_{\psi_2}^{n_k} S_{n_{k+l}}(F,G)(w)=F(w+i(rn_k-n_{k+l})\tau)+G(w+ir(n_k-n_{k+l})\tau)$$
and this is our specific choice of $(n_k)$, which prevents $|rn_k-n_{k+l}|$ to be too small, which will ensure the unconditional convergence,
uniformly in $k$, of $\sum_l F(w+i(rn_k-n_{k+l})\tau)$. Indeed, let $A\subset\NN$ be finite and let 
 \begin{align*}
  p_0(k)&=\inf\left\{rn_k-n_{k+l}:\ l\geq 0,\ rn_k-n_{k+l}\geq 0\right\}\\
  p_1(k)&=\inf\left\{n_{k+l}-rn_k:\ l\geq 0,\ n_{k+l}-rn_k\geq 0\right\}.\\
 \end{align*}
We have already observed during the proof of Theorem \ref{thm:coh2hypsame} that $p_0(k)$ and $p_1(k)$ go to $+\infty$ with $k$.
We write $A=A_0\cup A_1$ with $A_0=\{l\in A:\ rn_k-n_{k+l}\geq 0\}$ and $A_1=\{l\in A:\ rn_k-n_{k+l}< 0\}$ so that 
\begin{align*}
 \left\|\sum_{l\in A}F(w+i(rn_k-n_{k+l})\tau)\right\|&\leq \left\|\sum_{l\in A_0}F(w+i(rn_k-n_{k+l})\tau)\right\|+\left\|\sum_{l\in A_1}F(w+i(rn_k-n_{k+l})\tau)\right\|\\
 &\leq \veps(p_0(k))+\veps(p_1(k))
\end{align*}
where $\veps$ is the function appearing in Lemma \ref{lem:coh2parsame}. We now conclude as above.
\end{proof}


\subsection{Parabolic automorphisms with different attractive fixed points}
The next result settles the case of parabolic automorphisms with different attractive fixed points.

\begin{theorem}\label{thm:coh2pardifferent}
 Let $\varphi_1,\varphi_2\in\aut\cap\para$ whose attractive fixed points are respectively $\alpha_1$ and $\alpha_2$. Assume that $\alpha_1\neq\alpha_2$. Then $C_{\varphi_1}$ and $C_{\varphi_2}$ are 
 disjointly frequently hypercyclic on $H^2(\DD)$.
\end{theorem}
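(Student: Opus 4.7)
The plan is to follow the scheme of Theorem \ref{thm:coh2hypdifferent}, replacing the exponential rates coming from hyperbolicity by the polynomial rates available for parabolic automorphisms via Lemmas \ref{lem:coh2hypdifferent1} and \ref{lem:coh2pardifferent}. After conjugating by an automorphism of $\DD$ we normalize $\alpha_1=1$, $\alpha_2=-1$, so that the half-plane models (through the two distinct Cayley maps $\mathcal C_1,\mathcal C_2$) are the translations $\psi_i(w)=w+i\tau_i$, $\tau_i\in\RR^*$. I will apply Theorem \ref{thm:dfhcc} with the simple choice $n_k=k+1$.

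For the dense sets, set $\gamma_l^{(i)}:=\varphi_i^{-n_l}(\alpha_{3-i})\in\TT\setminus\{\alpha_i\}$ (these are Blaschke sequences in $\DD$ converging nontangentially to $\alpha_i$). Let $\mathcal D_1$ be the dense subset of $H^2(\DD)$ given by Lemma \ref{lem:coh2hypdifferent3} applied to $\varphi_1$ with $\gamma=\alpha_2$, $d=2$, and the extra zeros $\xi_1=\alpha_1,\xi_2=\alpha_2$; define $\mathcal D_2$ symmetrically. For $(f,g)\in\mathcal D_1\times\mathcal D_2$ set
$$S_{n_k}(f,g)=f\circ\varphi_1^{-n_k}+g\circ\varphi_2^{-n_k}.$$

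The single recurring estimate that drives every condition of Theorem \ref{thm:dfhcc} is the bound on the $H^2$-norm of a \emph{cross term} such as $g\circ\varphi_2^{-n_{k+l}}\circ\varphi_1^{n_k}$. I would split $\TT$ according to the position of $w:=\varphi_1^{n_k}(z)$ into three parts:
\begin{enumerate}[(i)]
 \item \textbf{Good part} $G_k:=\{z\in\TT:\ w\in D(\alpha_1,C/n_k)\}$; by Lemma \ref{lem:coh2pardifferent} its complement has measure $O(1/n_k)$. On $G_k$, Lemma \ref{lem:coh2hypdifferent1} applied to $\varphi_2$ at $\xi=\alpha_1$ gives $\varphi_2^{-n_{k+l}}(w)\in D(\gamma_{k+l}^{(2)},C'/(n_k n_{k+l}^2))$, and the property of $g\in\mathcal D_2$ furnishes $|g|\le M n_{k+l}^{4}\cdot |{\cdot}-\gamma_{k+l}^{(2)}|^{2}$, giving a pointwise bound of order $1/n_k^{2}$ and a contribution $O(1/n_k^{4})$ to the squared norm.
 \item \textbf{Intermediate part} $B_k^{1}$, where $w$ is bounded away from both $\alpha_1$ and $\alpha_2$. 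Then $\varphi_2^{-n_{k+l}}(w)\in D(\alpha_2,C/n_{k+l})$ by Lemma \ref{lem:coh2pardifferent}, and since $g$ vanishes to order $2$ at $\alpha_2$ the contribution is $O(1/(n_k n_{k+l}^{4}))$.
 \item \textbf{Transition part} $B_k^{2}$, where $w\in D(\alpha_2,\delta)$ for a fixed small $\delta$. A second use of Lemma \ref{lem:coh2hypdifferent1} (applied to $\varphi_1$ at $\xi=\alpha_2$) shows that $B_k^{2}=\varphi_1^{-n_k}(D(\alpha_2,\delta))$ has measure $O(1/n_k^{2})$; here we invoke the vanishing of $g$ at $\alpha_1$, exploiting the fact that the only way $\varphi_2^{-n_{k+l}}$ can carry a point in $D(\alpha_2,\delta)$ far from $\alpha_2$ is that this point is close to $\varphi_2^{n_{k+l}}(\alpha_1)$, and then $\varphi_2^{-n_{k+l}}(w)$ is correspondingly close to $\alpha_1$.
\end{enumerate}
Combining the three regions yields an estimate of the form
$$\big\|g\circ\varphi_2^{-n_{k+l}}\circ\varphi_1^{n_k}\big\|_{H^2}^{2}\lesssim \frac{1}{n_k^{2}n_{k+l}^{2}}+\frac{1}{n_{k+l}^{4}},$$
so that $\sum_{l}\|g\circ\varphi_2^{-n_{k+l}}\circ\varphi_1^{n_k}\|$ converges absolutely, uniformly in $k$. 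The analogous estimates for $f\circ\varphi_1^{n_k-n_{k+l}}$ (which is of the same type as in Theorem \ref{thm:coh2hypdifferent}), and the symmetric terms for $C_{\varphi_2}^{n_k}$, yield (C2) and (C3); taking $l=0$ in the estimate above gives (C4); for (C1) it suffices to notice that $\|f\circ\varphi_1^{-n_k}\|_{H^2}\lesssim 1/n_k$ (same splitting, with $l=0$ in the cross computation degenerating to the case of a single operator), which is summable.

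The main obstacle is the third region $B_k^{2}$: there the image $\varphi_2^{-n_{k+l}}(w)$ traverses essentially the whole circle as $w$ moves in $D(\alpha_2,\delta)$, so only a careful decomposition exploiting simultaneously the Blaschke-type vanishing at the $\gamma_{k+l}^{(2)}$'s and the boundary vanishing at $\alpha_1$ is sharp enough to produce a bound that is both summable in $l$ and bounded uniformly in $k$. Producing that fine bound is where the combination of both ingredients in the construction of $\mathcal D_2$ (zeros at the orbit points \emph{and} at the two fixed points) is crucial.
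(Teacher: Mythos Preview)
Your overall scheme (Theorem \ref{thm:dfhcc} with $n_k=k+1$, dense sets from Lemma \ref{lem:coh2hypdifferent3}, and $S_{n_k}(f,g)=f\circ\varphi_1^{-n_k}+g\circ\varphi_2^{-n_k}$) matches the paper. The region-splitting you describe in (i)--(ii) is essentially the paper's argument for condition (C3), and it works there because the sum has only $k+1$ terms: a pointwise bound $O(1/n_k^2)$ on the good set yields $\sum_{l=0}^k 1/n_k^2\lesssim 1/k$.

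The gap is in (C1) and (C2). You try to prove absolute convergence of $\sum_l\|g\circ\varphi_2^{-n_{k+l}}\circ\varphi_1^{n_k}\|$ and of $\sum_k\|f\circ\varphi_1^{-n_k}\|$, but for parabolic automorphisms this is impossible: the paper remarks just before Lemma \ref{lem:sumpara1} that $\sum_n\|C_\varphi^n f\|$ never converges when $\varphi$ is parabolic and $f\neq 0$. Concretely: your bound on region (i) is $|g\circ\varphi_2^{-n_{k+l}}\circ\varphi_1^{n_k}|\lesssim n_{k+l}^{4}\cdot\big(n_k n_{k+l}^{2}\big)^{-2}=1/n_k^{2}$, which is \emph{independent of $l$}, so summing over $l$ diverges. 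Even your displayed final estimate gives $\|\cdot\|\lesssim 1/(n_k n_{k+l})+1/n_{k+l}^2$, and $\sum_{l\ge 0}1/n_{k+l}=+\infty$. The same problem hits your (C1): $\|f\circ\varphi_1^{-n_k}\|\lesssim 1/n_k$ is correct, but $\sum_k 1/n_k$ is not summable.

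The paper handles (C1) and the forward half of (C2) by a genuinely different mechanism: it never bounds individual norms. Using Lemma \ref{lem:sumpara1}, the full pointwise sum $\sum_l|G\circ\psi_2^{-(k+l)}\circ\psi_1^{k}(it)|$ is bounded by a constant $M$ for every $t$ and $k$; one then shows that off a set $I_k$ of invariant measure $o(1)$ (uniformly in $k$), the partial sum over any finite $A$ is bounded by $\veps(\min A)$ via Lemma \ref{lem:coh2parsame}. Combining the two regions and invoking Lemma \ref{lem:uniformunconditional} gives the required uniform unconditional convergence of $\sum_l C_{\varphi_2}^{n_k}S_{n_{k+l}}$. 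The Blaschke-type zeros in $\mathcal D_2$ are not used at all for (C2); they enter only in (C3), exactly where your regional argument does succeed.
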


\begin{proof}[Proof of Theorem \ref{thm:coh2pardifferent}]
Without loss of generality, we may assume $\alpha_1=-1$ and $\alpha_2=1$. Let $\mathcal D_1$ be the dense subset of $H^2(\DD)$ obtained by applying Lemma \ref{lem:coh2hypdifferent3} to $\varphi=\varphi_1$, 
$\gamma=\alpha_2$, $d=2$, assuming also that $\alpha_1$ is a zero of order $2$ of any function in $\mathcal D_1$. We define similarly $\mathcal D_2$, exchanging the roles played by $\varphi_1$ and $\varphi_2$. As usual, for $(f,g)\in\mathcal D_1\times\mathcal D_2$, define $S_{k}(f,g)=f\circ\varphi_1^{-k}+g\circ\varphi_2^{-k}$. The unconditional convergence of $\sum_k S_k(f,g)$ comes from Lemma \ref{lem:coh2parsame} (the key point here being that $f$ (resp. $g$) is bounded on $\overline{\DD}$ and $\alpha_1$ (resp. $\alpha_2$) is a zero of order $2$ of $f$ (resp. $g$)). For $k,l\geq 0$, 
$$C_{\varphi_1}^k S_{k+l}(f,g)=f\circ\varphi_1^{-l}+g\circ\varphi_2^{-(k+l)}\circ\varphi_1^k.$$
The first problem comes from the proof of the unconditional convergence, uniformly in $k$, of $\sum_l g\circ\varphi_2^{-(k+l)}\circ\varphi_1^k$. We move on the half-plane by considering $\psi_1(w)=\frac{w}{1+i\tau_1 w}$ (because $-1$ is the attractive fixed point of $\varphi_1$) , $\psi_2(w)=w+i\tau_2$, $\tau_2>0$, $G=g\circ\mathcal C^{-1}$
which satisfies $|G(it)|\leq C/(1+t^2)$ for some $C>0$. We can write
$$G\circ\psi_2^{-(k+l)}\circ\psi_1^{k}(it)=G\left(\frac{it}{1-k\tau_1 t}-(k+l)i\tau_2\right).$$
By Lemma \ref{lem:sumpara1}, there exists some constant $M>0$ such that, for all $k\geq 1$ and all $t\in\mathbb R$, 
\begin{equation}\label{eq:coh2pardifferent}
\sum_l |G\circ\psi_2^{-(k+l)}\circ\psi_1^{k}(it)|\leq M.
\end{equation}
Let $A\subset\NN$ be finite and let $2a=\tau_2\times \min(A)$. We set, for $k\geq 1$, 
$$I_k:=\left\{t\in\RR:\ \frac{t}{1-k\tau_1 t}\geq a\right\}.$$
Provided $t\notin I_k$, for all $l\in A$, 
\begin{align*}
\frac{t}{1-k\tau_1 t}-k\tau_2-l\tau_2&\leq a-l\tau_2\\
&\leq \frac{-l\tau_2}2
\end{align*}
which yields, as in the proof of Lemma \ref{lem:coh2parsame}, 
$$\sum_{l\in A}|G\circ \psi_2^{-(k+l)}\circ\psi_1^k(it)|\leq\veps(a)$$
where $\veps(a)$ does not depend neither on $t$ nor on $k$ (provided $t\notin I_k$) and $\veps(a)\to 0$ as $a\to+\infty$. 
Therefore, in view of \eqref{eq:coh2pardifferent} and Lemma \ref{lem:uniformunconditional} (a), one just need to prove that $\minv(I_k)\leq \delta(a)$, where $\delta(a)$ does not depend on $k$
and $\delta(a)\to 0$ as $a\to+\infty$. Assuming for instance $\tau_1>0$, it is easy to prove that
\begin{align*}
I_k&=\left[\frac{a}{1+ak\tau_1},\frac 1{k\tau_1}\right]
\end{align*}
which shows the claim since $\minv(I_k)\leq |I_k|\lesssim \frac 1a$ where the involved constant does not depend on $k$.

We now turn to 
$$C_{\varphi_1}^{k}S_{k-l}(f,g)=f\circ\varphi_1^{-l}+g\circ\varphi_2^{-(k-l)}\circ\varphi_1^{k}$$
and we prove the unconditional convergence, uniformly in $k$, of $\sum_{l=0}^k g\circ\varphi_2^{-(k-l)}\circ\varphi_1^k$. Let $C_1$ and $C_2>0$ be given by Lemma \ref{lem:coh2pardifferent} for $\varphi_1$. For $k\geq 0$, let us set 
$$E_k=\left\{z\in\TT:\ \varphi_1^k(z)\notin D(\alpha_1,C_1k^{-1})\right\}.$$
The measure of $E_k$ is less than $C_2k^{-1}$ and, by Lemma \ref{lem:sumpara1}, 
$\sum_{l=0}^{+\infty}|g\circ\varphi_2^{-(k-l)}\circ\varphi_1^k(z)|\leq M$ for some $M>0$, independent of $z\in\TT$ and $k\in\NN$. Therefore, for any $A\subset\NN$ finite, 
$$\left(\int_{E_k}\left|\sum_{l\in A\cap [0,k]}g\circ\varphi_2^{-(k-l)}\circ\varphi_1^k\right|^2 dm\right)^{1/2}\lesssim k^{-1/2}.$$
On the other hand, let $z\notin E_k$. We know that, for $k$ large enough, $\alpha_2\notin D(\alpha_1,C_1k^{-1})$. One may use Lemma \ref{lem:coh2hypdifferent1} to get $\varphi_2^{-(k-l)}\circ\varphi_1^k(z)\in D(\gamma_{k-l},C_3 k^{-1}(k-l)^{-2})$
for some $C_3>0$ where $\gamma_{k-l}=\varphi_2^{-(k-l)}(\alpha_1)$. Using Lemma \ref{lem:coh2hypdifferent3}, we obtain, for all $z\notin E_k$, 
$$|g\circ\varphi_2^{-(k-l)}\circ\varphi_1^{k}(z)|\lesssim (k-l)^4\times\frac{1}{k^2(k-l)^4}=\frac 1{k^2}.$$
Hence,
\begin{align*}
\left(\int_{E_k^c}\left|\sum_{l\in A\cap[0,k]}g\circ\varphi_2^{-(k-l)}\circ\varphi_1^{k}\right|^2dm\right)^{1/2}&\leq \sum_{l\in A\cap[0,k]}\left(\int_{E_k^c}\left|g\circ\varphi_2^{-(k-l)}\circ\varphi_1^{k}\right|^2dm\right)^{1/2}\\
&\lesssim \frac{1}k.
\end{align*}
This achieves the proof of the unconditional convergence, uniformly in $k$, of $\sum_l g\circ\varphi_2^{-(k-l)}\circ\varphi_1^k$ and that of Theorem \ref{thm:coh2pardifferent}.
\end{proof}


\subsection{Parabolic automorphism and hyperbolic automorphism with different attractive fixed points}

In this subsection, we turn to the case where $\varphi_1$ is parabolic and $\varphi_2$ is hyperbolic. We begin with the case where they have different attractive fixed points. 

\begin{theorem}\label{thm:coh2parhypdifferent}
Let $\varphi_1\in \para\cap\aut$, $\varphi_2\in\hyp$ with different attractive fixed points. Then $C_{\varphi_1}$ and $C_{\varphi_2}$ are disjointly frequently hypercyclic.
\end{theorem}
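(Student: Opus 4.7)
The plan is to apply Theorem \ref{thm:dfhcc} with $n_k=k+1$ and the natural approximation
$$S_{n_k}(f,g)=f\circ\varphi_1^{-n_k}+g\circ\varphi_2^{-n_k}.$$
For the dense sets, I take $\mathcal D_1$ given by Lemma \ref{lem:coh2hypdifferent3} applied to $\varphi_1$ with $\gamma=\alpha_2$, $d=2$, adjoining $\alpha_1$, $\alpha_2$ and $\beta_2$ as additional zeros of order $2$, and $\mathcal D_2$ defined analogously from Lemma \ref{lem:coh2hypdifferent3} applied to $\varphi_2$ with $\gamma=\alpha_1$, $d=1$, and additional zeros at $\alpha_1,\alpha_2,\beta_2$.

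Conditions (C1) and (C4) follow directly: the parabolic summands $\sum_k f\circ\varphi_1^{-n_k}$ are handled by Lemma \ref{lem:coh2parsame} (since $\alpha_1,\alpha_2$ are zeros of $f$ of order $2$), the hyperbolic summands $\sum_k g\circ\varphi_2^{-n_k}$ by Lemma \ref{lem:coh2hypsame}, and the cross term $g\circ\varphi_2^{-n_k}\circ\varphi_1^{n_k}\to 0$ arises from Lemma \ref{lem:coh2hypdifferent5}. Expanding $C_{\varphi_j}^{n_k}S_{n_{k\pm l}}(f,g)$, three of the four term families in each of (C2) and (C3) reduce to previously handled estimates: the ``diagonal'' terms $f\circ\varphi_1^{\pm(\cdot)}$ and $g\circ\varphi_2^{\pm(\cdot)}$ via Lemmas \ref{lem:coh2parsame} and \ref{lem:coh2hypsame}, and the mixed term $g\circ\varphi_2^{-n_{k\pm l}}\circ\varphi_1^{n_k}$ via Lemma \ref{lem:coh2hypdifferent5} (which gives an exponentially small bound $\lesssim\lambda^{n_{k\pm l}/2}$, summable uniformly in $k$).

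The main obstacle is the remaining mixed term $f\circ\varphi_1^{-n_{k\pm l}}\circ\varphi_2^{n_k}$, because $\varphi_2^{n_k}$ concentrates $\TT$ near $\alpha_2$ while $\varphi_1^{-n_{k\pm l}}$ drags points back toward $\alpha_1$, and the norms of $C_{\varphi_2^{n_k}}$ blow up, so one cannot factor. My key trick is to move to the half-plane with $\alpha_2=1$, $\alpha_1=-1$, so that $\psi_2(w)=\lambda(w-b)+b$ and $\psi_1(w)=w/(1-i\tau w)$, and then to observe that the Möbius involution $\sigma(w)=1/w$ preserves $\overline{\mathbb C_0}$ and satisfies $\sigma\circ\psi_1^{-l}\circ\sigma^{-1}(z)=z+il\tau$. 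Writing $F=f\circ\mathcal C^{-1}$ and $G(z)=F(1/z)$, the zeros of order $2$ at $\alpha_1$ and $\alpha_2$ translate into $|G(z)|\lesssim 1/(1+|z|^2)$ on $\overline{\mathbb C_0}$, whence a Lemma \ref{lem:sumpara1}-type computation yields
$$\sum_l |F\circ\psi_1^{-n_{k+l}}(u)|=\sum_l|G(1/u+in_{k+l}\tau)|\lesssim M$$
uniformly for $u\in\overline{\mathbb C_0}$.

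To turn this pointwise bound into an $L^2$ estimate uniform in $k$, I introduce $E_k=\{t\in\mathbb R:|\psi_2^{n_k}(it)|\leq K_k\}$ with $K_k=\lambda^{-n_k/2}$. By Lemma \ref{lem:coh2hypdifferent4}, $\minv(E_k)\lesssim K_k\lambda^{n_k}=\lambda^{n_k/2}\to 0$, so the uniform bound $M$ gives $\int_{E_k}|\sum_l\cdot|^2\,d\minv\lesssim \lambda^{n_k/2}$. On $E_k^c$ we have $v_1:=\mathrm{Im}(1/\psi_2^{n_k}(it))$ bounded by $1/K_k$, so for $n_{k+l}\tau\gg 1/K_k$ the terms $|G(v+in_{k+l}\tau)|$ decay like $C/(n_{k+l}\tau)^2$, producing partial sums $\lesssim\veps(\min F)$ on $E_k^c$. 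The proof of condition (C3) for $\sum_{l=0}^k f\circ\varphi_1^{-n_{k-l}}\circ\varphi_2^{n_k}$ follows the same splitting: the contribution of terms with small $n_{k-l}$ is controlled by $\|f\circ\varphi_2^{n_k}\|\lesssim\lambda^{n_k/4}$ (the zero of order $2$ of $f$ at $\alpha_2$ combined with Lemma \ref{lem:coh2hypdifferent4}), and the tail is handled by the uniform $\sum|G|$ bound together with the $E_k^c$ decay. All remaining verifications then consist in applying Lemma \ref{lem:uniformunconditional}.
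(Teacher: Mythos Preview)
Your inversion trick $G(z)=F(1/z)$ for the $f$-mixed term in (C2) is elegant and essentially equivalent to the paper's computation (which puts $\alpha_1$ at infinity so that $\psi_1$ is the translation and studies $I_k(a)=\{t:\Im m(\psi_2^k(it))\geq a\}$ directly). Note however that your splitting set $E_k$ has measure depending only on $k$, so for small $k$ the bound $M^2\minv(E_k)$ is not small; you need the exceptional set to depend on $\min F$, as the paper's $I_k(a)$ does with $a\sim \tau\min F$.

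The real gap is (C3). Your claim that the mixed term $g\circ\varphi_2^{-n_{k-l}}\circ\varphi_1^{n_k}$ is handled by Lemma~\ref{lem:coh2hypdifferent5} is incorrect: that lemma gives $\|g\circ\varphi_2^{-(k-l)}\circ\varphi_1^{k}\|\lesssim\lambda^{(k-l)/2}$, and $\sum_{l\in F\cap[0,k]}\lambda^{(k-l)/2}=\sum_{j=0}^{k-\min F}\lambda^{j/2}$ stays bounded but does \emph{not} tend to $0$ as $\min F\to\infty$. The paper instead splits $\TT$ using $E_k=\{z:\varphi_1^{k}(z)\notin D(\alpha_1,C_1k^{-1})\}$ via Lemma~\ref{lem:coh2pardifferent}; on $E_k$ it uses the uniform bound $\sum_m|g\circ\varphi_2^{-m}|\leq M$, and on $E_k^c$ it combines Lemma~\ref{lem:coh2hypdifferent1} with the \emph{structured zeros} of $g$ at $\gamma_{k-l}=\varphi_2^{-(k-l)}(\alpha_1)$ from Lemma~\ref{lem:coh2hypdifferent3} to get $|g\circ\varphi_2^{-(k-l)}\circ\varphi_1^{k}(z)|\lesssim k^{-2}$, hence a total $\lesssim k^{-1}$. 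Your sketch for the $f$-term in (C3) (``controlled by $\|f\circ\varphi_2^{n_k}\|$'') is similarly incomplete: the paper again uses the zeros of $f$ at $\varphi_1^{-(k-l)}(\alpha_2)$ on the complement of a small set $E'_k$. Finally, your $\mathcal D_2$ is undefined when $\beta_2=\alpha_1$, since Lemma~\ref{lem:coh2hypdifferent3} requires $\gamma\notin\{\alpha,\beta\}$; the paper treats this case separately with polynomials vanishing to order $2$ at $\alpha_2$ and $\beta_2$.
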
 
\begin{proof}
Let $\alpha_i$ be the attractive fixed point of $\varphi_i$, $i=1,2$ and let $\beta_2$ be the repulsive fixed point of $\varphi_2$.
We may assume $\alpha_1=1$ and $\alpha_2=-1$. We also set $\lambda=\varphi_2'(\alpha_2)$. The proof will differ at some point provided $\beta_2=\alpha_1$ or not. 
We define  $\mathcal D_1$ as the set given by applying Lemma \ref{lem:coh2hypdifferent3} for $\varphi_1$, $\gamma=\alpha_2$, $d=2$, adding that $\alpha_1$ is a zero of multiplicity $2$ of any $f\in\mathcal D_1$. When $\beta_2\neq\alpha_1$, we define $\mathcal D_2$ as the set given by Lemma \ref{lem:coh2hypdifferent3} for $\varphi_2$,
$\gamma=\alpha_1$, $d=2$, adding $\alpha_2$ and $\beta_2$ at the sequence of zeros.
Otherwise, we consider for $\mathcal D_2$ the set of polynomials with a zero of order at least 2 at $\alpha_2$ and $\beta_2$. 

For $(f,g)\in\mathcal D_1\times\mathcal D_2$, we consider $S_k(f,g)=f\circ\varphi_1^{-k}+g\circ\varphi_2^{-k}$; the unconditional convergence of $\sum_k S_k(f,g)$ is now standard in this paper. Let us turn to $\sum_l C_{\varphi_1}^k S_{k+l}(f,g)$. The difficult point is the unconditional convergence uniformly in $k$ of $\sum_l g\circ\varphi_2^{-(k+l)}\circ\varphi_1^k$. It follows from an application of Lemma \ref{lem:coh2hypdifferent5}.
We now investigate $\sum_l C_{\varphi_2}^k S_{k+l}(f,g)$ - observe that now there is no symmetry between $\varphi_1$ and $\varphi_2$ and we have to investigate each case. We are reduced to the study of $\sum_l f\circ\varphi_1^{-(k+l)}\circ\varphi_2^k$. We move on the half-plane and study $\sum_l F\circ\psi_1^{-(k+l)}\circ\psi_2^k$, where 
$$F\circ\psi_1^{-(k+l)}\circ\psi_2^k(it)=F\big(\psi_2^k(it)-(k+l)i\tau\big),$$
assuming $\psi_1(w)=w+i\tau$ for some $\tau>0$. We argue as in the proof of Theorem \ref{thm:coh2pardifferent} setting, for $a>0$, 
$$I_k(a):=\{t\in\RR:\ \Im m(\psi_2^k(it))\geq a\}\subset\{t\in\RR:\ |\psi_2^k(it)|\geq a\}.$$
We just need to prove that $\minv(I_k(a))\leq\delta(a)$ with $\delta(a)\to 0$ as $a\to+\infty$.
Since $\psi_2$ is the linear fractional map fixing $0$ and $b_2=\mathcal C(\beta_2)$ with $\psi_2'(0)=\lambda$, it can be written
$$\psi_2(w)=\frac{\lambda b_2 w}{(\lambda-1)w+b_2}$$
(if $b_2=\infty$, namely $\beta_1=\alpha_1=1$, $\psi_2$ is simply the dilation $\psi_2(w)=\lambda w$ and the required inequality on $\minv(I_k(a))$ is immediate in this case).
Therefore,  
$$ t\in I_k(a)\implies  \frac{\lambda^{2k} |b_2|^2 t^2}{\left((\lambda^k-1)t+\Im m(b_2)\right)^2+\left(\Re e(b_2)\right)^2}\geq a^2.$$
If $\Re e(b_2)\neq 0$ (namely, if $\varphi_2$ is not an automorphism), then we deduce easily that $|t|\gtrsim a$ (where the involved constant
does not depend on $k$) which shows what we need on $\minv(I_k(a))$. Otherwise we let
\begin{align*}
 I_k^1(a)&=\{t\in \RR:\ |t|\geq \sqrt a/|b_2|\}\\
 I_k^2(a)&=\{t\in\RR:\ |(\lambda^k-1)t+\Im m(b_2)|^2\leq a^{-1}\}
\end{align*}
and observe that $I_k(a)\subset I_k^1(a)\cup I_k^2(a)$. Again, $\minv(I_k^1(a))\leq\delta(a)$. Regarding $I_k^2(a)$, 
let us set $t_k=\Im m(b_2)/(1-\lambda^k)$. Then
$$I_k^2(a)= \left[t_k-\frac{a^{-1/2}}{1-\lambda^k},t_k+\frac{a^{1/2}}{1-\lambda^k}\right]\subset \left[t_k-\frac{a^{-1/2}}{1-\lambda},t_k+\frac{a^{-1/2}}{1-\lambda}\right]$$
so that
$$\minv(I_k^2(a))\leq meas\left(\left[t_k-\frac{a^{-1/2}}{1-\lambda},t_k+\frac{a^{-1/2}}{1-\lambda}\right]\right)\leq \frac{2 a^{-1/2}}{1-\lambda}.$$
Hence, the estimation needed on $\minv(I_k(a))$ is also true in this case.


The next step is the investigation of $\sum_{l=0}^k C_{\varphi_1}^k S_{k-l}(f,g)$ which essentially reduces to the study of $\sum_{l=0}^k g\circ\varphi_2^{-(k-l)}\circ \varphi_1^k$. 
Let $C_1,C_2>0$ be given by Lemma \ref{lem:coh2pardifferent} for $\varphi_1$. For $k\geq 0$, let us set 
$$E_k:=\left\{z\in\TT:\ \varphi_1^k(z)\notin D(\alpha_1,C_1 k^{-1})\right\}$$
which satisfies $m(E_k)\leq C_2k^{-1}$. Since $\sum_m |g\circ\varphi_2^{-m}|$ is bounded 
on $\TT$, we easily deduce the existence of $M>0$, independent of $k$, such that, 
for any $A\subset\NN$ finite, 
$$\left(\int_{E_k}\left|\sum_{l\in A\cap [0,k]} g\circ\varphi_2^{-(k-l)}\circ\varphi_1^k\right|^2dm\right)^{1/2}\leq\frac M{\sqrt k}.$$
To evaluate the integral over $E_k^c$, we first assume that $\alpha_1\neq\beta_2$, so that,  for $k$ large enough, $\alpha_2,\beta_2\notin D(\alpha_1,C_1k^{-1})$.
One may use Lemma \ref{lem:coh2hypdifferent1} to get $\varphi_2^{-(k-l)}\circ\varphi_1^k(z)\in D(\gamma_{k-l},C_3k^{-1}\lambda^{k-l})$ where $\gamma_{k-l}=\varphi_2^{-(k-l)}(\alpha_2)$
provided $z\notin E_k$. This yields 
$$|g\circ\varphi_2^{-(k-l)}\circ\varphi_1^k(z)|\lesssim
\left\{
\begin{array}{ll}
 \displaystyle(k-l)^4\frac{\lambda^{2(k-l)}}{k^2}&\textrm{if $\varphi_2$ is an automorphism}\\[0.2cm]
 \displaystyle\lambda^{2(l-k)}\frac{\lambda^{2(k-l)}}{k^2}&\textrm{otherwise.} 
\end{array}
\right.
$$
In both cases, this implies 
\begin{equation}\label{eq:coh2parhypdifferent}
|g\circ\varphi_2^{-(k-l)}\circ\varphi_1^k(z)|\lesssim \frac1{k^2}.
\end{equation}
Hence,
\begin{align*}
\left(\int_{E_k^c} \left| \sum_{l\in A\cap[0,k]} g\circ\varphi_2^{-(k-l)}\circ\varphi_1^k\right|^2 dm\right)^{1/2}&\lesssim \sum_{l\in A\cap[0,k]}  \left(\int_{E_k^c} \frac{1}{k^4}dm\right)^{1/2}\\
&\lesssim \frac{1}{k}.
\end{align*}
When $\alpha_1=\beta_2$,
the proof is simpler since for any $z\notin E_k$, $\varphi_2^{-(k-l)}\circ\varphi_1^k(z)\in D(\beta_2,Ck^{-1}\lambda^{k-l})$.
Therefore \eqref{eq:coh2parhypdifferent} remains true since $g$ has a zero of order $2$ at $\beta_2$.

To conclude, it remains to handle $\sum_{l=0}^k C_{\varphi_2}^kS_{k-l}(f,g)$, namely
$\sum_{l=0}^k f\circ\varphi_1^{-(k-l)}\circ\varphi_2^k$. We set 
$$E'_k:=\left\{z\in\TT:\ \varphi_2^k(z)\notin D(\alpha_2,k^{-1})\right\}.$$
Then, by Lemma \ref{lem:coh2hypdifferent4}, $\minv(E'_k)\leq C_1\lambda^k k^{-1}$ tends to zero as $k$ tends to $+\infty$.
On the other hand, if $z\notin E'_k$, then $\varphi_1^{-(k-l)}\circ\varphi_2^k(z)\in D(\gamma'_{k-l},C_2 k^{-1}(k-l)^{-2})$,
where $\gamma'_{k-l}=\varphi_1^{-(k-l)}(\alpha_2)$. We conclude exactly as in the proof of Theorem \ref{thm:coh2pardifferent}. 
\end{proof}


\subsection{Parabolic automorphism and hyperbolic automorphism with the same attractive fixed point}

We shall prove a result which is stronger than required. 

\begin{theorem}
Let $\varphi_1\in\para\cap\aut$, $\varphi_2\in\hyp\cap \aut$ with the same attractive fixed point. Then any frequently hypercyclic vector for $C_{\varphi_1}$ is not a hypercyclic 
vector for $C_{\varphi_2}$. 
\end{theorem}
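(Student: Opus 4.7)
The plan is to argue by contradiction: assume $f\in H^2(\DD)$ is simultaneously frequently hypercyclic for $C_{\varphi_1}$ and hypercyclic for $C_{\varphi_2}$. By conjugating both $\varphi_1$ and $\varphi_2$ by a single automorphism of $\DD$, we may assume that the common attractive fixed point is $1$ and that the repulsive fixed point of $\varphi_2$ is $-1$; both FHC and HC are preserved. Transferring to the right half-plane via the Cayley map $\mathcal{C}$, the half-plane models become $\psi_1(w)=w+i\tau$ (with $\tau\in\RR^*$) and $\psi_2(w)=\lambda w$ (with $\lambda>1$). Writing $F=f\circ\mathcal{C}^{-1}\in\mathcal{H}^2$ and $\rho_a(s):=a/(\pi(a^2+s^2))$, one checks directly that
$$\|C_{\psi_1}^n F\|^2=\int_{\RR}|F(is)|^2\rho_1(s-n\tau)\,ds,\qquad \|C_{\psi_2}^m F\|^2=\int_{\RR}|F(is)|^2\rho_{\lambda^m}(s)\,ds.$$
Using the semigroup identity $\rho_{\lambda^m}=\rho_1\ast\rho_{\lambda^m-1}$ (valid for $m\geq 1$), this rewrites as $\|C_{\psi_2}^m F\|^2=\int_{\RR}\Psi(t)\rho_{\lambda^m-1}(t)\,dt$, where the continuous positive function
$$\Psi(t):=\int_{\RR}|F(is)|^2\rho_1(s-t)\,ds$$
interpolates the parabolic orbit norms: $\Psi(n\tau)=\|C_{\psi_1}^n F\|^2$.

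The decisive ingredient is the elementary pointwise inequality $|\rho_1'(s)|\leq \rho_1(s)$ (equivalent to $(|s|-1)^2\geq 0$), which after differentiating under the integral sign yields $|\Psi'(t)|\leq \Psi(t)$ for all $t$. Hence $\log\Psi$ is $1$-Lipschitz, so $\Psi(t)\geq \Psi(t_0)e^{-|t-t_0|}$ for every $t_0$.

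Now, frequent hypercyclicity of $f$ for $C_{\varphi_1}$ applied to the ball of radius $1/2$ around the constant function $2\in\mathcal{H}^2$ produces a set $A\subset\NN$ of positive lower density $\delta$ on which $\Psi(n\tau)\geq 9/4$. The Lipschitz estimate forces $\Psi(t)\geq 9/(4e)$ on the fattened set $S:=\bigcup_{n\in A}[n\tau-1,n\tau+1]$. A straightforward counting argument (extracting a sub-family of intervals with pairwise gap at least $2$) uses the positive lower density of $A$ to obtain $|S\cap[0,\lambda^m]|\geq c_\tau\lambda^m$ for all $m$ sufficiently large, with $c_\tau>0$ depending only on $\delta$ and $\tau$. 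Since $\rho_{\lambda^m-1}(t)\geq 1/(4\pi\lambda^m)$ uniformly on $[0,\lambda^m]$, integrating yields $\rho_{\lambda^m-1}(S\cap[0,\lambda^m])\geq c_\tau/(4\pi)$, and therefore
$$\|C_{\psi_2}^m F\|^2\geq\int_{S\cap[0,\lambda^m]}\Psi(t)\rho_{\lambda^m-1}(t)\,dt\geq \frac{9c_\tau}{16\pi e}>0$$
for all $m$ beyond some $m_0$. But hypercyclicity of $f$ for $C_{\varphi_2}$ demands $\liminf_m\|C_{\psi_2}^m F\|=0$, a contradiction.

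The main difficulty is the passage from frequent hypercyclicity — which only delivers pointwise control at the nodes $\{n\tau:n\in A\}$ with positive \emph{lower} density — to a Lebesgue-density statement on $\RR$ that can be detected by the very spread-out Cauchy measure $\rho_{\lambda^m-1}$ as $m\to\infty$. The logarithmic Lipschitz bound $|\Psi'|/\Psi\leq 1$ is the bridge: it transfers pointwise lower bounds at the lattice points $n\tau$ to unit neighborhoods of them without loss of density, allowing the slow-decaying Cauchy kernel $\rho_{\lambda^m-1}$ to capture the set $S$ with mass bounded below uniformly in $m$.
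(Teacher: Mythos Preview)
Your proof is correct, but it takes a more elaborate route than the paper's. Both arguments pass to the half-plane models $\psi_1(w)=w+i\tau$, $\psi_2(w)=\lambda w$, and both derive a contradiction from the same tension: frequent hypercyclicity for $C_{\psi_1}$ forces $|F(it)|^2$ to be large on a set of positive linear density along $i\RR$, while hypercyclicity for $C_{\psi_2}$ forces a dilated average of $|F(it)|^2$ to be arbitrarily small.

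The paper handles this much more directly. It localizes to bounded intervals where the weight $1/(1+t^2)$ is comparable to $1$: closeness of $C_{\psi_1}^{n_k}F$ to the constant $1$ in $\mathcal H^2$ immediately gives $\int_{n_k\tau}^{n_k\tau+\tau}|F(it)|^2\,dt\geq 1/2$ for a sequence $(n_k)$ with $n_k\leq Ck$, and closeness of $C_{\psi_2}^pF$ to $0$ gives $\int_0^{\lambda^p}|F(it)|^2\,dt\leq \lambda^p\veps$ after a single change of variable. Counting how many disjoint intervals $(n_k\tau,n_k\tau+\tau)$ fit in $[0,\lambda^p]$ then gives the contradiction in two lines.

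Your approach keeps the full $\mathcal H^2$ weight throughout and introduces two genuinely different ingredients: the Cauchy semigroup identity $\rho_{\lambda^m}=\rho_1*\rho_{\lambda^m-1}$, which turns $\|C_{\psi_2}^mF\|^2$ into an average of the interpolating function $\Psi$, and the pointwise bound $|\rho_1'|\leq\rho_1$, which makes $\log\Psi$ $1$-Lipschitz and propagates lower bounds from the lattice $\{n\tau:n\in A\}$ to unit neighborhoods. This is a nice and robust idea, but here it is machinery one can avoid: the paper's localization trick sidesteps the need to control $\Psi$ between lattice points altogether.
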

\begin{proof}
We may assume that the common attractive fixed point is $1$ and that the repulsive fixed point of $\varphi_2$ is $-1$.
We move on the half-plane and consider $\psi_1(w)=w+i\tau$, $\tau>0$, $\psi_2(w)=\lambda w$, $\lambda>1$. 
We argue by contradiction and assume that $F\in FHC(C_{\psi_1})\cap HC(C_{\psi_2})$. 
There exist $C>0$ and a sequence $(n_k)$ such that, for all $k\geq 1$, $n_k\leq Ck$ and $C_{\psi_1}^{n_k}F$ is so close to $1$ that 
$$\int_0^\tau \big |C_{\psi_1}^{n_k}(F)(it)|^2dt\geq \frac12\iff \int_{n_k\tau}^{n_k\tau+\tau}|F(it)|^2dt\geq\frac1 2.$$
On the other hand, let $\veps\in(0,1/4C\tau)$ and $p\in\mathbb N$ as large as we want such that $C_{\psi_2}^p(F)$ is so close to $0$ that
$$\int_0^1 |F\circ \psi_2^p(it)|^2dt<\veps.$$
This yields 
$$\int_0^{\lambda^p} |F(it)|^2dt\leq\lambda^p \veps.$$
Now, provided $p$ is large enough, there are at least $\lambda^p/2C\tau$ disjoint intervals $(n_k\tau,n_k\tau+\tau)$ contained in $[0,\lambda^p]$. Therefore, 
$$\lambda^p \veps \geq \int_0^{\lambda^p }|F(it)|^2 dt\geq \frac{\lambda^p}{2C\tau}\times \frac{1}2.$$
This contradicts the choice of $\veps$. 
\end{proof}

\begin{remark}
 The same proof shows that any upper frequently hypercyclic vector for $C_{\varphi_1}$ is not a hypercyclic vector for $C_{\varphi_2}$.
\end{remark}


\subsection{Parabolic automorphism and hyperbolic nonautomorphism with the same attractive fixed point}

We finish the proof of Theorem \ref{thm:coh2}  with the last case.

\begin{theorem}
Let $\varphi_1\in\para\cap\aut$, $\varphi_2\in\hyp\backslash \aut$ with the same attractive fixed point. 
Then $C_{\varphi_1}$ and $C_{\varphi_2}$ are disjointly frequently hypercyclic.
\end{theorem}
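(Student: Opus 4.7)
The plan is to apply Theorem \ref{thm:dfhcc} with $(n_k)=(k+1)$, adapting the argument of Theorem \ref{thm:coh2parhypdifferent} to the case of a common attractive fixed point. After normalization, work in the half-plane with $\psi_1(w)=w+i\tau$ ($\tau\in\RR^*$) and $\psi_2(w)=\lambda(w-b_2)+b_2$, where $\lambda>1$ and $b_2\in(-1,0)$ (the strict inequality $b_2>-1$ is forced by $\varphi_2\notin\aut$, so $\beta_2=\mathcal C^{-1}(b_2)$ lies strictly outside $\overline{\DD}$). I would take the dense sets
\begin{align*}
 \mathcal D_1 &= \{P\circ\mathcal C^{-1} : P \text{ polynomial},\ P(1)=P'(1)=P(\beta_2)=0\},\\
 \mathcal D_2 &= \{P\circ\mathcal C^{-1} : P \text{ polynomial},\ P(1)=P(\beta_2)=0\},
\end{align*}
dense in $H^2(\DD)$ by Lemma \ref{lem:denseh2easy}, and $S_{n_k}(f,g)=f\circ\varphi_1^{-n_k}+g\circ\varphi_2^{-n_k}$. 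The double zero at $1$ for $f\in\mathcal D_1$ provides the decay $|F(it)|\lesssim 1/(1+t^2)$ needed to invoke Lemma \ref{lem:coh2parsame}, while the zero at $\beta_2$ (common to both sets) allows Lemma \ref{lem:coh2hypsame} to be applied with $b_1=b_2$.

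Conditions (C1), (C4), and the ``easy halves'' of (C2) and (C3) are verified as in the previous theorems of this section: the sums $\sum_l f\circ\varphi_1^{\pm l}$, $\sum_{l\in F\cap[0,k]}f\circ\varphi_1^l$ are controlled by Lemma \ref{lem:coh2parsame}, and $\sum_l g\circ\varphi_2^{\pm l}$ admit geometric decay by Lemma \ref{lem:coh2hypsame}. For the cross term $\sum_l g\circ\varphi_2^{-n_{k+l}}\circ\varphi_1^{n_k}$ in (C2), Lemma \ref{lem:coh2hypsame}(b) applied with $b_1=b_2$, $b_2^{\mathrm{lem}}=b_2-in_k\tau$, $\kappa=0$ and $\gamma=\lambda^{-n_{k+l}}$ yields the uniform estimate $\lesssim\lambda^{-n_{k+l}/4}$ (the constant in that lemma does not depend on the imaginary parts of its parameters, as is clear from the proof, which only uses a translation in the integration variable).

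Two sums remain, for which no natural separation between the maps is available, and these constitute the main obstacle. The first is $\sum_l f\circ\varphi_1^{-n_{k+l}}\circ\varphi_2^{n_k}$ (and its $[0,k]$-analogue in (C3)). Since $\Re e(\psi_2^{n_k}(it)+1-in_{k+l}\tau)\gtrsim\lambda^{n_k}$, the double zero of $P$ at $1$ gives
$$\bigl|F(\psi_2^{n_k}(it)-in_{k+l}\tau)\bigr|\lesssim\frac{1}{\lambda^{2n_k}+(\lambda^{n_k}t-n_{k+l}\tau)^2},$$
and comparison of $\sum_l$ with an integral produces the pointwise bound $\lesssim\lambda^{-n_k}$, uniformly in $t$; consequently $\|\sum_{l\in F}f\circ\varphi_1^{-n_{k+l}}\circ\varphi_2^{n_k}\|_{\mathcal H^2}\lesssim\lambda^{-n_k}$ uniformly in $F\subset\NN$ finite, which delivers the uniform unconditional convergence via Lemma \ref{lem:uniformunconditional} (together with the norm convergence for fixed $k$ obtained by dominated convergence). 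The second, and most delicate, sum is $\sum_{l=0}^k g\circ\varphi_2^{-n_{k-l}}\circ\varphi_1^{n_k}$ in (C3), where Lemma \ref{lem:coh2hypsame}(b) alone is insufficient for $l$ close to $k$. Following the strategy of Theorem \ref{thm:coh2parhypdifferent}, Lemma \ref{lem:coh2pardifferent} applied to $\varphi_1$ forces $\varphi_1^{n_k}(z)\in D(\alpha,C_1/n_k)$ outside a set of measure $\lesssim 1/n_k$, and the linearization of $\varphi_2^{-1}$ at the repulsive fixed point $\alpha$ (with multiplier $\lambda>1$) gives $|\varphi_2^{-n_{k-l}}(\varphi_1^{n_k}(z))-\alpha|\lesssim \lambda^{n_{k-l}}/n_k$ as long as this quantity stays bounded. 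Combining this with the zero of $g$ at $\alpha$ in the regime $\lambda^{n_{k-l}}\lesssim n_k$, and Lemma \ref{lem:coh2hypsame}(b) in the complementary regime where the image has saturated and moved toward $\beta_2$, one obtains $\sum_{l=0}^k\|g\circ\varphi_2^{-n_{k-l}}\circ\varphi_1^{n_k}\|\lesssim (\log n_k)/\sqrt{n_k}+n_k^{-1/4}\to 0$, completing the verification via Lemma \ref{lem:uniformunconditional}(b).
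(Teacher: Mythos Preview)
Your overall plan and the treatment of the $f$-sums match the paper exactly: the crucial point, that $\Re e(\psi_2^{n_k}(it))=(-b_2)(\lambda^{n_k}-1)\gtrsim\lambda^{n_k}$ because $\varphi_2$ is \emph{not} an automorphism, is precisely what the paper exploits, and your pointwise estimate $\sum_l|F(\psi_2^{n_k}(it)-in_{k+l}\tau)|\lesssim\lambda^{-n_k}$ is the paper's own computation.

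There are, however, two problems with the $g$-sums. First, the minor one: your claim that the constant in Lemma~\ref{lem:coh2hypsame}(b) is insensitive to $\Im m(b_2^{\mathrm{lem}})$ is false, because the weight $dt/(1+t^2)$ is not translation invariant (look at the second integral in that proof, over $|t-y_1|>\gamma^{-1/2}$). The estimate you want for (C2) is nevertheless true, but for a different reason: since $n_{k+l}\ge n_k$, one has $\lambda^{n_{k+l}/2}\gg n_k\tau$ except for finitely many pairs, and that is what rescues the bound.

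The serious gap is in (C3). With your polynomial $\mathcal D_2$, condition (C3) actually \emph{fails}. Take $m_0=\lfloor\log_\lambda(n_k\tau)\rfloor$, so that $\lambda^{-m_0}n_k\tau=:c_k\in[1,\lambda]$. Then for bounded $t$,
\[
\psi_2^{-m_0}\bigl(\psi_1^{n_k}(it)\bigr)=\lambda^{-m_0}(it+in_k\tau-b_2)+b_2\longrightarrow ic_k+b_2\qquad(k\to\infty),
\]
and since $\Re e(ic_k+b_2)=b_2<0$, this limit lies outside $\overline{\CC_0}$; for a generic $g\in\mathcal D_2$ the value $G(ic_k+b_2)$ is nonzero, and by dominated convergence $\|g\circ\varphi_2^{-m_0}\circ\varphi_1^{n_k}\|\to|G(ic_k+b_2)|\sqrt\pi>0$. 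The corresponding index $l=k-m_0+1\sim k-\log k$ eventually lies in any tail $[N,k]$, so a \emph{single} term already violates the uniform smallness required by (C3). Your two-regime bookkeeping misses this: the $E_k^c$-contribution in regime~1 is $\sum_{m\le\log_\lambda n_k}\lambda^m/n_k\sim1$, not $(\log n_k)/\sqrt{n_k}$; and in regime~2 the Lemma~\ref{lem:coh2hypsame}(b) bound is not uniform for $\log_\lambda n_k<m<2\log_\lambda n_k$. Adding finitely many zeros to $g$ cannot repair this, since the bad points $ic_k+b_2$ sweep out a whole arc as $k$ varies.

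The paper avoids this by taking $\mathcal D_2$ from Lemma~\ref{lem:coh2hypdifferent3} rather than polynomials: the Blaschke-type factors there are designed exactly so that $|g(z)|\le M\lambda^{-d(k-l)}|z-\gamma_{k-l}|^d$, which kills the $\lambda^{k-l}$ growth coming from the linearization of $\varphi_2^{-1}$ and yields $|g\circ\varphi_2^{-(k-l)}\circ\varphi_1^k(z)|\lesssim k^{-2}$ uniformly in $l$ on $E_k^c$. That cancellation is unavailable with a fixed polynomial, and is the essential difference between the two approaches.
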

\begin{proof}
 We assume that $\alpha_1=\alpha_2=1$ and write $\psi_1(w)=w+i\tau$, $\tau>0$, $\psi_2(w)=\lambda(w-b_2)+b_2$, $b_2\in (-\infty,0)$ and $\lambda>1$.
 We define $\mathcal D_1$ and $\mathcal D_2$ as follows: $\mathcal D_1$ is the set of holomorphic polynomials with a zero of order $2$ at $1$, 
 $\mathcal D_2$ is the set given by Lemma \ref{lem:coh2hypdifferent3} for $\varphi_2$, $\gamma=\alpha_1$, $d=2$, adding $\alpha_2$ and $\beta_2$
 at the sequence of zeros.
 
As usual, for $(f,g)\in\mathcal D_1\times\mathcal D_2$, let $S_k(f,g)=f\circ\varphi_1^{-k}+g\circ\varphi_2^{-k}$. We first study the
series $\sum_l f\circ\varphi_1^{-(k+l)}\circ\varphi_2^k$. Let $t\in\mathbb R$. Then
\begin{align*}
 \sum_{l=0}^{+\infty}|F\circ\psi_1^{-(k+l)}\circ\psi_2^k(it)|&=\sum_{l=0}^{+\infty}\big|F\big(\lambda^k(it-b_2)+b_2-(k+l)i\tau\big)\big|\\
 &\leq \sum_{l=0}^{+\infty} \frac{C}{(-\lambda^k b_2+b_2+1)^2+(\lambda^k-k\tau-l\tau)^2}\\
 &\leq 2C\sum_{l\in\mathbb Z} \frac1{(-\lambda^k b_2+b_2+1)^2+l^2\tau^2}\\
 &\leq \veps_k
\end{align*}
with $\veps_k\to 0$ as $k\to+\infty$. In particular, for all $A\subset\NN$ finite, 
$$\left\|\sum_{l\in A}F\circ\psi_1^{-(k+l)}\circ\psi_2^k\right\|\leq\veps_k$$
which shows the unconditional convergence, uniformly in $k$, of $\sum_l f\circ\varphi_1^{-(k+l)}\circ\varphi_2^k$. 
The corresponding property for the series $\sum_{l=0}^k f\circ\varphi_1^{-(k-l)}\circ\varphi_2^k$ follows exactly the same lines.
Let us turn to $\sum_l g\circ\varphi_2^{-(k+l)}\circ\varphi_1^k$ and $\sum_{l=0}^k g\circ\varphi_2^{-(k-l)}\circ\varphi_1^k$.
But there is nothing new: we just follow line by line the corresponding estimates of Theorem \ref{thm:coh2parhypdifferent},
in the case $\beta_2\neq \alpha_1$.
\end{proof}


\section{Composition operators on $H(\DD)$}

\label{sec:cohol}

\subsection{Preliminaries}

Let $\varphi$ be a self-map of $\DD$. It is well-known that $C_\varphi$ is hypercyclic on $H(\DD)$ if and only if $\varphi$ is univalent
on $\DD$ and has no fixed point in $\DD$. In that case there exists a unique point $\alpha\in\TT$, the Denjoy-Wolff point of $\varphi$,
such that $\varphi^n\to\alpha$ uniformly on all compact subsets of $\DD$. Moreover $\varphi$ admits an angular derivative
$\varphi'(\alpha)$ at $\alpha$ with $\varphi'(\alpha)\in (0,1)$. $\varphi$ is called {\bf parabolic} if $\varphi'(\alpha)=1$
and {\bf hyperbolic} if $\varphi'(\alpha)\in (0,1)$. 
Our proof of Theorem \ref{thm:cohol} will need to know precisely
how $(\varphi^n(0))$ converges to $\alpha$. For this we will require some regularity of $\varphi$ at $\alpha$ in order to apply the results of \cite{BoSh97}.
\begin{definition}
 Let $\varphi$ be a parabolic univalent self-map of $\DD$ with Denjoy-Wolff point $\alpha$. We say that $\varphi$
 is {\bf regular} if $\varphi$ is continuous on $\overline{\DD}$, $\varphi''(\alpha)\neq 0$ and there exists $\veps>0$ such that
 $$\varphi(z)=\alpha+(z-\alpha)+\frac{\varphi''(\alpha)}{2}(z-\alpha)^2+\frac{\varphi^{(3)}(\alpha)}{3!}(z-\alpha)^3+\gamma(z)$$
 where $\gamma(z)=o(|z-\alpha|^{3+\veps})$ as $z\to \alpha$ in $\DD$.
\end{definition}
We then know that such a map $\varphi$ satisfies $\Re e(\varphi''(\alpha))\geq 0$ and that
\begin{itemize}
 \item if $ \Re e(\varphi''(\alpha))>0$, then the orbits $(\varphi^n(z))$ converge nontangentially to $\alpha$;
 \item if $\varphi''(\alpha)$ is pure imaginary, then the orbits $(\varphi^n(z))$ converge tangentially to $\alpha$.
\end{itemize}
\begin{definition}
  Let $\varphi$ be a hyperbolic univalent self-map of $\DD$ with Denjoy-Wolff point $\alpha$. We say that $\varphi$
 is {\bf regular} if $\varphi$ is continuous on $\overline{\DD}$ and there exists $\veps>0$ such that
 $$\varphi(z)=\alpha+\varphi'(\alpha)(z-\alpha)+\gamma(z)$$
 where $\gamma(z)=o(|z-\alpha|^{1+\veps})$ as $z\to \alpha$ in $\DD$.
\end{definition}
For a hyperbolic map, the orbits $(\varphi^n(z))$ always converge nontangentially to $\alpha$ without assuming any regularity at $\alpha$.

\smallskip

As for $H^2(\DD)$, we will often move on the right half-plane. 
If $\varphi$ is a self-map of $\DD$, $\psi$ will always mean $\psi=\mathcal C\circ\varphi\circ\mathcal C^{-1}$.
$\rho_{\mathbb D}$ will denote the hyperbolic distance on the disc, and $\rho_{\CC_0}$ the hyperbolic distance on the right half-plane $\CC_0$. We recall that 
$$\rho_{\CC_0}(w_1,w_2)=\log\frac{1+p_{\CC_0}(w_1,w_2)}{1-p_{\CC_0}(w_1,w_2)}$$
where $p_{\CC_0}$ is the pseudo-hyperbolic distance on $\CC_0$ given by
$$p_{\CC_0}(w_1,w_2)=\frac{|w_1-w_2|}{|w_1+\overline{ w_2}|}.$$

The following lemma is reminiscent from \cite[Theorem 2.6]{BCJP18}.
\begin{lemma}\label{lem:criterioncomposition}
Let $\varphi_1$, $\varphi_2$ be holomorphic and univalent self-maps of $\DD$ without fixed point in $\DD$.
Let $(K_k)$ be an increasing sequence of compact discs contained in $\mathbb D$ such that $\mathbb D=\bigcup_{k\geq 1}K_k$.
Assume that there exists a family $(A_k)_{k\in\NN}$ of pairwise disjoint subsets of $\NN$ such that 
\begin{enumerate}[(a)]
\item each $A_k$ has positive lower density;
\item the sets $\varphi_i^{n}(K_k)$, $k\in\mathbb N$, $n\in A_k$, $i=1,2$, are pairwise disjoint.
\end{enumerate}
Then $C_{\varphi_1}$ and $C_{\varphi_2}$ are disjointly frequently hypercyclic on $H(\DD)$. 
\end{lemma}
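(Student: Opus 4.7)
The plan is to build the disjointly frequently hypercyclic vector $h\in H(\DD)$ as a Runge-type series $h=\sum_k g_k$, where each $g_k$ is produced by Mergelyan's theorem so that, for each $k$ and each $n\in A_k$, the pair $(C_{\varphi_1}^n h,C_{\varphi_2}^n h)$ approximates a prescribed element of a countable dense subset of $H(\DD)\times H(\DD)$. The disjointness condition (b) is what makes each step compatible with all previous ones.

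First I would fix a dense sequence $((P_p,Q_p))_{p\geq 1}$ in $H(\DD)\times H(\DD)$ (say of polynomial pairs) together with positive reals $(\eta_k)$ with $\sum_k\eta_k<\infty$. Since $\varphi_i^n$ converges locally uniformly on $\DD$ to the Denjoy--Wolff point on $\TT$, I may, by replacing each $A_p$ with $A_p\cap[N_p,+\infty)$ for sufficiently large $N_p$ (which preserves positive lower density), further assume that every $\varphi_i^n(K_p)$ with $n\in A_p$ is disjoint from $K_p$. Enumerating $\bigcup_p A_p$ in increasing order as $(n_k)_{k\geq 1}$ with $n_k\in A_{p(k)}$, and setting $h_{k-1}:=g_1+\cdots+g_{k-1}$, I would apply Mergelyan's theorem on the compact set
\[
L_k=K_k\,\cup\bigcup_{\substack{j\leq k\\ i=1,2}}\varphi_i^{n_j}(K_{p(j)})
\]
to obtain $g_k\in H(\DD)$ that is $\eta_k$-close on $L_k$ to the continuous function which vanishes on $K_k$ and on each $\varphi_i^{n_j}(K_{p(j)})$ with $j<k$, and which on $\varphi_1^{n_k}(K_{p(k)})$ (resp.\ on $\varphi_2^{n_k}(K_{p(k)})$) coincides with $P_{p(k)}\circ(\varphi_1^{n_k})^{-1}-h_{k-1}$ (resp.\ $Q_{p(k)}\circ(\varphi_2^{n_k})^{-1}-h_{k-1}$). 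Here $(\varphi_i^{n_k})^{-1}$ is well defined on its domain by the univalence of $\varphi_i$.

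Setting $h=\sum_k g_k$ yields a function in $H(\DD)$ (since $\|g_k\|_{K_k}\leq\eta_k$ and the $K_k$'s exhaust $\DD$), and for every $p$ and every $n=n_k\in A_p$ a direct estimate gives
\[
\max\Bigl(\sup_{K_p}\bigl|C_{\varphi_1}^n h-P_p\bigr|,\ \sup_{K_p}\bigl|C_{\varphi_2}^n h-Q_p\bigr|\Bigr)\leq \eta_k+\sum_{j>k}\eta_j,
\]
which tends to $0$ as $k\to\infty$. A standard density argument then shows that for any $(f,g)\in H(\DD)\times H(\DD)$ and any neighbourhood $V$ of $(f,g)$ (in the compact-open topology), one can choose $p$ with $(P_p,Q_p)$ close enough to $(f,g)$ on a big enough $K_m$ so that $A_p$ minus a finite set is contained in $\{n:(C_{\varphi_1}^n h,C_{\varphi_2}^n h)\in V\}$; the latter set therefore has positive lower density.

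The main technical step is the applicability of Mergelyan's theorem at each stage, which requires $\CC\setminus L_k$ to be connected. This holds because each $\varphi_i^{n_j}(K_{p(j)})$, being the image of a closed disc under the univalent map $\varphi_i^{n_j}$, is a closed topological disc (hence its complement in $\CC$ is connected); and by hypothesis~(b) together with the disjointness from the $K_k$'s arranged above, the finitely many pieces making up $L_k$ are pairwise disjoint. Thus $L_k$ is a finite disjoint union of closed topological discs in $\CC$, so $\CC\setminus L_k$ is connected and Mergelyan's theorem applies, legitimising the whole inductive construction.
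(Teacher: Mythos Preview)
Your iterative Mergelyan approach is natural, but there is a genuine gap in the disjointness claim for $L_k$. You only arrange that $\varphi_i^n(K_p)\cap K_p=\varnothing$ for $n\in A_p$, whereas the round disc appearing in $L_k$ is $K_k$, not $K_{p(k)}$. Since your enumeration $(n_k)$ of $\bigcup_p A_p$ visits each $A_p$ infinitely often, there are arbitrarily large $k$ with $p(k)=p$ fixed and small; for such $k$ one has $K_k\supsetneq K_p$, and nothing prevents the \emph{new} image $\varphi_i^{n_k}(K_{p})$ from meeting $K_k\setminus K_p$. When this happens the target function on $L_k$ (identically zero on $K_k$, equal to $P_p\circ(\varphi_1^{n_k})^{-1}-h_{k-1}$ on the new image) is not even well defined, so Mergelyan cannot be invoked. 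The old images cause a parallel problem: since $K_k$ exhausts $\DD$, it eventually contains every fixed compact $\varphi_i^{n_j}(K_{p(j)})$, and during the transitional range of $k$ the overlap $K_k\cap\varphi_i^{n_j}(K_{p(j)})$ need not be connected (the image of a disc under a univalent map is only a topological disc, not convex), so $\CC\setminus L_k$ need not be connected either. Thus the last paragraph of your proposal does not go through.

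This is precisely the obstacle the paper avoids by trading the step-by-step Mergelyan scheme for a single global Carleman (tangential) approximation on the closed set $F=\bigcup_{i,k}\bigcup_{n\in A_k}\varphi_i^n(K_k)\subset\DD$: one $g\in H(\DD)$ is produced once and for all with $|f-g|\le\varepsilon(|z|)$ on $F$ and $\varepsilon(r)\to0$ as $r\to1$, so no exhausting family of round discs needs to be threaded between the images. (The paper also first splits each $A_k$ into subsets $A_k(l)$ of positive lower density, one per target pair $(P_l,Q_l)$, rather than tying the target to the index $k$ as you do; this is a minor organisational difference.) Your construction can be repaired, but it requires either an Arakelian/Nersesjan-type argument---which is essentially the paper's route---or a substantially more careful inductive bookkeeping in which at each step one enlarges the round disc to absorb all earlier images \emph{and} chooses the next $n_k$ large enough that the two new images lie outside it, while still keeping positive lower density in every $A_p$. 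As written, the proof does not secure this.
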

\begin{proof}
Since the proof follows rather closely that of \cite[Theorem 3.3]{BCJP18}, we will only sketch it. Let $(P_l,Q_l)$ be a sequence of couples of polynomials
which is dense in $H(\DD)\times H(\DD)$. Let us split each $A_k$, $k\in\NN$, into infinitely many disjoint subsets $A_k(l)$, $l\in\NN$, each of them 
being of positive lower density. We set 
$$F=\bigcup_{i=1}^2 \bigcup_{k\geq 1}\bigcup_{n\in A_k}\varphi_i^n(K_k).$$
Our assumption implies, in the terminology of \cite{Gai87}, that $F$ is a Carleman set of $\DD$; namely for all $f$ continuous on $F$
and analytic in $\mathring{F}$, there exists $g\in H(\DD)$ such that 
$$|f(z)-g(z)|\leq \veps(|z|)$$
where $\veps(r)\to 0$ as $r\to 1$. We apply this with 
$$f=\left\{
\begin{array}{ll}
 P_l\circ \varphi_1^{-n}&\textrm{on }\varphi_1^n(K_k),\ n\in A_k(l)\\
  Q_l\circ \varphi_2^{-n}&\textrm{on }\varphi_2^n(K_k),\ n\in A_k(l).
\end{array}\right.
$$
It is then easy to check that $f$ is a disjointly frequently hypercyclic vector for $(C_{\varphi_1},C_{\varphi_2})$.
\end{proof}

The next lemma will help us to produce sets $(A_k)$ such that assumption (b) of Lemma \ref{lem:criterioncomposition} is satisfied.

\begin{lemma}\label{lem:runaway}
Let $\varphi$ be a univalent self-map of $\DD$ with attractive fixed point $\alpha\in \TT$. Then for any compact subset $K$ of $\DD$, there exists $N\in\NN$ such that, for all $n,m$ with $|n-m|\geq N$, $\varphi^n(K)\cap\varphi^m(K)=\varnothing$.
\end{lemma}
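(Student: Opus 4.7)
The plan is to argue by contradiction using the Denjoy-Wolff theorem in its uniform form on compact subsets, combined with the univalence of $\varphi$ to reduce the two-parameter question to a one-parameter one.

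Suppose the conclusion fails for some compact $K\subset\DD$. Then there exist sequences of integers $(n_k)$ and $(m_k)$ with $m_k>n_k$ and $m_k-n_k\to+\infty$ such that, for every $k$, $\varphi^{n_k}(K)\cap\varphi^{m_k}(K)\neq\varnothing$. Choose $z_k,w_k\in K$ with $\varphi^{n_k}(z_k)=\varphi^{m_k}(w_k)$. Since $\varphi$ is univalent on $\DD$, so is the iterate $\varphi^{n_k}$; hence cancelling $\varphi^{n_k}$ gives
$$z_k=\varphi^{l_k}(w_k),\qquad l_k:=m_k-n_k\to+\infty.$$
Thus $z_k\in\varphi^{l_k}(K)\cap K$.

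Now since $\varphi$ is univalent without fixed point in $\DD$, the Denjoy–Wolff theorem asserts that $\varphi^l\to\alpha$ locally uniformly on $\DD$; in particular the convergence is uniform on the compact set $K$, so
$$\sup_{w\in K}\,|\varphi^l(w)-\alpha|\xrightarrow[l\to+\infty]{}0.$$
Applying this to $w=w_k$ yields $z_k=\varphi^{l_k}(w_k)\to\alpha$. But $(z_k)$ lies in the compact set $K\subset\DD$, while $\alpha\in\TT$, so $\alpha\notin K$: contradiction.

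There is essentially no obstacle here; the only point deserving attention is verifying that univalence lets us pass from an intersection of images of $\varphi^{n_k}$ and $\varphi^{m_k}$ to a single relation $z_k=\varphi^{l_k}(w_k)$. Once that is observed, the uniform convergence $\varphi^l(K)\to\{\alpha\}$, which is a direct consequence of Denjoy–Wolff plus normality of the family $(\varphi^l)$, closes the argument immediately.
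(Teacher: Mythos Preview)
Your proof is correct and follows essentially the same approach as the paper: both use univalence to reduce the two-index statement $\varphi^n(K)\cap\varphi^m(K)=\varnothing$ to the one-index statement $\varphi^l(K)\cap K=\varnothing$, and then invoke the uniform convergence $\varphi^l\to\alpha$ on $K$ given by Denjoy--Wolff. The paper phrases this directly rather than by contradiction, but the content is the same.
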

\begin{proof}
By injectivity of $\varphi$, it suffices to prove that $\varphi^n(K)\cap K=\varnothing$ for all sufficiently large $n$. This is a consequence of the Denjoy-Wolff theorem, since $\varphi^n(z)$ converges uniformly to $\alpha$ on $K$.
\end{proof}

We are now ready for the proof of Theorem \ref{thm:cohol}, which will be divided into several cases. 
 Part of the proof will be done in a more general context (without assuming any regularity at the boundary of the maps).

\subsection{Well-separated convergence to the boundary}

We first show a very general result stating that, providing the sequences $(\varphi_1^n(0))$ and $(\varphi_2^n(0))$ are far away for the hyperbolic distance, 
then $C_{\varphi_1}$ and $C_{\varphi_2}$ are disjointly frequently hypercyclic.

\begin{theorem}\label{thm:wellseparated}
Let $\varphi_1$, $\varphi_2$ be holomorphic and univalent self-maps of $\DD$ without fixed point in $\DD$. Assume that, for all $A>0$, 
there exists $N\in\NN$ such that, for all $m,n\geq N$, $\rho_{\DD}(\varphi_1^m(0),\varphi_2^n(0))\geq A$. Then $C_{\varphi_1}$ and $C_{\varphi_2}$ are disjointly frequently hypercyclic on $H(\DD)$.
\end{theorem}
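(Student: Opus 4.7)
\begin{pf}
The plan is to invoke Lemma \ref{lem:criterioncomposition}. I would take $K_k$ to be the closed hyperbolic ball of center $0$ and radius $r_k$ in $\DD$, with $r_k \nearrow +\infty$; each $K_k$ is a closed Euclidean disc centered at $0$, and $\bigcup_k K_k = \DD$. Since a holomorphic self-map of $\DD$ is a contraction for $\rho_\DD$, one has $\varphi_i^n(K_k) \subset \{z \in \DD : \rho_\DD(z, \varphi_i^n(0)) \leq r_k\}$, so a sufficient condition for $\varphi_i^n(K_k) \cap \varphi_j^m(K_l) = \emptyset$ is
\begin{equation*}
\rho_\DD(\varphi_i^n(0), \varphi_j^m(0)) > r_k + r_l.
\end{equation*}

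I would then build inductively a strictly increasing sequence $(N_k)$ of positive integers such that, for every $k \geq 1$:
\begin{enumerate}[(i)]
\item by Lemma \ref{lem:runaway} applied to $K_k$, $\varphi_i^n(K_k) \cap \varphi_i^m(K_k) = \emptyset$ as soon as $|n-m| \geq N_k$, for $i = 1, 2$;
\item by the standing hypothesis applied with $A = 2r_k$, $\rho_\DD(\varphi_1^n(0), \varphi_2^m(0)) > 2r_k$ whenever $n, m \geq N_k$;
\item for every $k' < k$, every $n < N_k$, every $i \neq j$ in $\{1,2\}$, and every $m \geq N_k$, $\rho_\DD(\varphi_i^n(0), \varphi_j^m(0)) > r_k + r_{k'}$.
\end{enumerate}
Clause (iii) is achievable because it concerns only finitely many pairs $(n, k')$, and by the Denjoy--Wolff theorem, $\rho_\DD(\varphi_i^n(0), \varphi_j^m(0)) \to +\infty$ as $m \to +\infty$ for each fixed $n$ and $j$ (since $\varphi_j^m(0) \to \alpha_j \in \TT$). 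Once $(N_k)$ is fixed, \cite[Lemma 6.19]{BM09} produces pairwise disjoint subsets $(A_k)$ of $\NN$, each of positive lower density, with $\min(A_k) \geq N_k$ and $|n-m| \geq N_k + N_l$ for all $n \in A_k$, $m \in A_l$, $n \neq m$.

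Finally I would check that $\{\varphi_i^n(K_k) : i \in \{1,2\}, \ k \geq 1, \ n \in A_k\}$ is pairwise disjoint. Take $(i,n,k) \neq (j,m,l)$ with $n \in A_k$, $m \in A_l$, and assume without loss of generality that $k \leq l$. If $i = j$, then $n \neq m$ and $|n-m| \geq N_k + N_l \geq N_l$, so clause (i) applied to $K_l \supset K_k$ yields $\varphi_i^n(K_l) \cap \varphi_i^m(K_l) = \emptyset$, hence $\varphi_i^n(K_k) \cap \varphi_j^m(K_l) = \emptyset$. If $i \neq j$ and both $n, m \geq N_l$, clause (ii) with threshold $N_l$ gives $\rho_\DD(\varphi_i^n(0), \varphi_j^m(0)) > 2r_l \geq r_k + r_l$; the only other possibility is $n < N_l$ (since $m \geq \min(A_l) \geq N_l$), in which case clause (iii) with outer index $l$ and $k'=k$ yields the same bound. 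In every case the requisite hyperbolic separation holds, and Lemma \ref{lem:criterioncomposition} concludes.

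The main obstacle is the tension between fixing the threshold $N_k$ in advance (as required to apply \cite[Lemma 6.19]{BM09}) and the fact that, for $i \neq j$, the hypothesis produces hyperbolic separation only when \emph{both} iterates exceed a common threshold that itself depends on $r_k + r_l$. Clause (iii) is the device that absorbs the finitely many ``small versus large'' pairs that slip through, and it is precisely the Denjoy--Wolff convergence that makes (iii) enforceable.
\end{pf}
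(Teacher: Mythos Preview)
Your proof is correct and follows essentially the same route as the paper: hyperbolic discs $K_k$, the Schwarz--Pick contraction to reduce disjointness to a hyperbolic-distance estimate, Lemma~\ref{lem:runaway} for $i=j$, the standing hypothesis for $i\neq j$ with both iterates large, and Denjoy--Wolff to handle the remaining small-versus-large pairs before invoking \cite[Lemma 6.19]{BM09} and Lemma~\ref{lem:criterioncomposition}. One caveat: your clause (iii) has an apparent circularity (the range $n<N_k$ depends on the very $N_k$ you are choosing, so it does \emph{not} literally ``concern only finitely many pairs''), but this dissolves once you first fix the threshold from (ii)---call it $N_k^{(2)}$---and then use Denjoy--Wolff only for the finitely many $n<N_k^{(2)}$, the rest being covered by (ii) itself; this is exactly how the paper phrases it, splitting $N_k$ into $N_k^{(1)},N_k^{(2)},N_k^{(3)}$.
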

\begin{proof}
We shall apply Lemma \ref{lem:criterioncomposition}. Let $(R_k)$ be a sequence of positive real numbers increasing to $+\infty$ and let $D_k=\overline{D_{\rho_{\DD}}}(0,R_k)$ 
(the closed disc for the hyperbolic distance). 
By Lemma \ref{lem:runaway}, for any $k\geq 1$, there exists an integer $N_k^{(1)}$ such that, for all $n,m$ with $|n-m|\geq N_k^{(1)}$, for $i=1,2$, 
\begin{equation}\label{eq:wellseparated1}
\varphi_i^n(D_k)\cap\varphi_i^m(D_k)=\varnothing.
\end{equation}
Let us also consider $N_k^{(2)}$ such that 
\begin{equation}\label{eq:wellseparated2}
n,m\geq N_k^{(2)}\implies \rho_{\DD}(\varphi_1^n(0),\varphi_2^m(0))>2R_k.
\end{equation}
Since the sequences $(\varphi^n_1(0))$ and $(\varphi^n_2(0))$ go to the unit circle, we may also find $N_k^{(3)}$ such that
\begin{equation}\label{eq:wellseparated3}
m<N_k^{(2)}\textrm{ and }n\geq N_k^{(3)}\implies \rho_{\DD}(\varphi_1^n(0),\varphi_2^m(0))>2R_k
\end{equation}
\begin{equation}\label{eq:wellseparated4}
n<N_k^{(2)}\textrm{ and }m\geq N_k^{(3)}\implies \rho_{\DD}(\varphi_1^n(0),\varphi_2^m(0))>2R_k.
\end{equation}
We define $N_k=\max(N_k^{(1)},N_k^{(2)},N_k^{(3)})$. Let $(A_k)$ be a family of pairwise subsets of $\NN$ such that, for every $k\geq 1$, $\min(A_k)\geq N_k$ and,
for every $n\in A_k$ and every $m\in A_l$ with $n\neq m$, $|n-m|\geq N_k+N_l$. We claim that the sets $\varphi_i^{n}(D_k)$, $k\in\mathbb N$, $n\in A_k$, $i=1,2$, are pairwise disjoint. 
Indeed, let us consider two different couples $(i,n)$ and $(j,m)$ with $n\in A_k$ and $m\in A_l$. If $i=j$, then \eqref{eq:wellseparated1} implies that, since $|n-m|\geq N^{(1)}_{\max(k,l)}$, 
$$\varphi_i^{n}(D_{\max(k,l)})\cap\varphi_i^m(D_{\max(k,l)})=\varnothing.$$
If $i\neq j$, then we may assume $k\geq l$. Following the position of $m$ with respect to $N_k^{(2)}$, \eqref{eq:wellseparated2} or \eqref{eq:wellseparated3} ensure that
$$D_{\rho_\DD}(\varphi_1^n(0),R_k)\cap D_{\rho_\DD}(\varphi_2^m(0),R_l))=\varnothing.$$
Since any analytic self-map is a contraction with respect to the hyperbolic distance, this gives $\varphi_1^n(D_k)\cap\varphi_2^m(D_l)=\varnothing$. We conclude by applying Lemma \ref{lem:criterioncomposition}.
\end{proof}
	
We now give two interesting corollaries.

\begin{corollary}\label{cor:differentattractive}
Let $\varphi_1$, $\varphi_2$ be univalent self-maps of $\DD$ with respective Denjoy-Wolff points $\alpha_1$, $\alpha_2\in \TT$. Assume that $\alpha_1\neq\alpha_2$. 
Then $C_{\varphi_1}$ and $C_{\varphi_2}$ are disjointly frequently hypercyclic on $H(\DD)$.
\end{corollary}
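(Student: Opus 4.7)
The plan is to deduce this corollary directly from Theorem \ref{thm:wellseparated}, so the only thing to verify is the hyperbolic separation hypothesis: for every $A>0$ there exists $N\in\NN$ with $\rho_\DD(\varphi_1^m(0),\varphi_2^n(0))\geq A$ whenever $m,n\geq N$.

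By the Denjoy-Wolff theorem, $\varphi_1^m(0)\to\alpha_1$ and $\varphi_2^n(0)\to\alpha_2$ as $m,n\to+\infty$. I would compute the pseudo-hyperbolic distance
$$p_\DD(z,w)=\left|\frac{z-w}{1-\overline{w}z}\right|$$
at these boundary limits. Since $\alpha_1,\alpha_2\in\TT$, one has $|1-\overline{\alpha_2}\alpha_1|=|\overline{\alpha_2}|\cdot|\alpha_2-\alpha_1|=|\alpha_1-\alpha_2|$, so by continuity
$$p_\DD(\varphi_1^m(0),\varphi_2^n(0))\;\xrightarrow[m,n\to+\infty]{}\;\frac{|\alpha_1-\alpha_2|}{|1-\overline{\alpha_2}\alpha_1|}=1.$$
Using the identity $\rho_\DD(z,w)=\log\frac{1+p_\DD(z,w)}{1-p_\DD(z,w)}$, this forces $\rho_\DD(\varphi_1^m(0),\varphi_2^n(0))\to+\infty$ as $\min(m,n)\to+\infty$. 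Consequently, for any $A>0$ we can choose $N$ large enough so that $m,n\geq N$ implies $\rho_\DD(\varphi_1^m(0),\varphi_2^n(0))\geq A$, and Theorem \ref{thm:wellseparated} applies to give disjoint frequent hypercyclicity of $C_{\varphi_1}$ and $C_{\varphi_2}$ on $H(\DD)$.

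There is essentially no obstacle here: the whole content is the computation that two boundary points at positive Euclidean distance are at infinite hyperbolic distance, which is just the standard fact that the pseudo-hyperbolic distance $p_\DD(z,w)$ tends to $1$ when $z,w$ approach distinct points of $\TT$. The only small care needed is to ensure continuity of $p_\DD$ at the boundary couple $(\alpha_1,\alpha_2)$, which is immediate because $\alpha_1\neq\alpha_2$ makes the denominator $|1-\overline{\alpha_2}\alpha_1|$ bounded away from zero.
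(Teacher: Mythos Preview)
Your proof is correct and follows exactly the approach the paper intends: the paper states Corollary~\ref{cor:differentattractive} as an immediate consequence of Theorem~\ref{thm:wellseparated} without spelling out any details, and you have supplied precisely the obvious verification that $p_\DD(\varphi_1^m(0),\varphi_2^n(0))\to 1$ by continuity at the distinct boundary pair $(\alpha_1,\alpha_2)$.
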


\begin{corollary}\label{cor:tangentialandnontangential}
Let $\varphi_1$, $\varphi_2$ be univalent self-maps of $\DD$ with the same Denjoy-Wolff point $\alpha\in \TT$. Assume that $(\varphi_1^n(0))$ converges nontangentially to $\alpha$ and 
that $(\varphi_2^n(0))$ converges tangentially to $\alpha$. Then $C_{\varphi_1}$ and $C_{\varphi_2}$ are disjointly frequently hypercyclic on $H(\DD)$.
\end{corollary}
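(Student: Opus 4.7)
The plan is to invoke Theorem \ref{thm:wellseparated}, which reduces the claim to showing that $\rho_{\DD}(\varphi_1^m(0), \varphi_2^n(0)) \to +\infty$ as $\min(m,n) \to +\infty$. I would conjugate by the Cayley transform so that $\alpha = 1$ corresponds to $\infty$ in $\CC_0$, and write $w_n = \mathcal{C}(\varphi_1^n(0)) = u_n + i v_n$ and $w'_m = \mathcal{C}(\varphi_2^m(0)) = u'_m + i v'_m$. The nontangential convergence of $(\varphi_1^n(0))$ to $1$ translates to the existence of $K > 0$ with $|v_n| \leq K u_n$ for all large $n$, which together with $|w_n| \to +\infty$ forces $u_n \to +\infty$. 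The tangential convergence of $(\varphi_2^m(0))$ translates to $|v'_m|/u'_m \to +\infty$.

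For the hyperbolic distance on $\CC_0$ I would use the standard identity
\[
\cosh \rho_{\CC_0}(w, w') = \frac{u^2 + (u')^2 + (v - v')^2}{2 u u'}
\]
and establish the divergence via a dichotomy. Given $A > 0$, fix $M > 0$ (to be chosen large in terms of $A$ and $K$) and take $N$ so large that $|v_n| \leq K u_n$ and $|v'_m| \geq M u'_m$ for all $n, m \geq N$. If $|v'_m| \geq 2 K u_n$, then $|v_n - v'_m| \geq |v'_m|/2$, and combining with $|v'_m| \geq M u'_m$ yields
\[
\frac{(v_n - v'_m)^2}{2 u_n u'_m} \geq \frac{M |v'_m|}{8 u_n} \geq \frac{M K}{4}.
\]
Otherwise $|v'_m| < 2K u_n$, and then $u'_m < 2K u_n / M$, so
\[
\frac{u_n^2 + (u'_m)^2}{2 u_n u'_m} \geq \frac{u_n}{2 u'_m} \geq \frac{M}{4 K}.
\]
In either case $\cosh \rho_{\CC_0}(w_n, w'_m) \geq M/(4K)$, which exceeds $\cosh A$ once $M$ is chosen large enough. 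Theorem \ref{thm:wellseparated} then applies.

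The main difficulty is isolating the correct dichotomy: the two modes of convergence are genuinely incompatible in the hyperbolic metric, but quantifying this requires separating the case where the imaginary parts of $w_n$ and $w'_m$ alone force the separation from the case where the disparity of real parts (equivalently, of Euclidean distances to the boundary of $\DD$) does the job. Once the right split is chosen, the estimates are elementary and no regularity assumption on $\varphi_1$ or $\varphi_2$ beyond the given tangential/nontangential dichotomy is needed.
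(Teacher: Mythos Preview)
Your proof is correct and follows the same overall strategy as the paper---reduce to Theorem \ref{thm:wellseparated} by transferring to $\CC_0$ and showing the hyperbolic separation---but you implement the separation differently. The paper introduces the ray $I=[1,+\infty)$ as an intermediary: it shows $\rho_{\CC_0}(\psi_2^n(0),I)\to+\infty$ (via a dichotomy on the point $x\in I$) while $\rho_{\CC_0}(\psi_1^m(0),I)$ stays bounded, and concludes by the triangle inequality. You instead compute the distance between the two orbits directly via the identity $\cosh\rho_{\CC_0}(w,w')=\frac{u^2+(u')^2+(v-v')^2}{2uu'}$ and run your dichotomy on the relative size of $|v'_m|$ and $u_n$. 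Your route is a bit more direct (no auxiliary set, no triangle inequality), while the paper's route is more geometric (nontangential orbits shadow the real ray, tangential orbits escape from it). Both case splits are elementary and neither requires regularity beyond the tangential/nontangential hypothesis.

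One cosmetic point: your final sentence ``in either case $\cosh\rho\geq M/(4K)$'' is only literally true when $K\geq 1$, since Case~1 gives $MK/4$. This is harmless---you may take $K\geq 1$ without loss of generality, or simply note that $\min(MK/4,\,M/(4K))\to+\infty$ as $M\to+\infty$---but it is worth saying explicitly.
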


In particular, if $\varphi_1$ is a hyperbolic univalent self-map of $\DD$ and if $\varphi_2$ is a univalent regular parabolic self-map of $\DD$,
with the same Denjoy-Wolff point $\alpha$ and $\varphi_2''(\alpha)\in i \mathbb R$, then we may apply Corollary \ref{cor:tangentialandnontangential}. 
This is also the case if $\varphi_1$ and $\varphi_2$ are both univalent regular parabolic self-maps of $\DD$, with $\Re e(\varphi_1''(\alpha))>0$ and $\varphi_2''(\alpha)\in i\mathbb R$
(although this last case will be also covered later in greater generality).
\begin{proof}
The computations are simplified if we work on the half-plane $\mathbb C_0$. 
We let $\psi_2^n(0)=:x_n+iy_n$; tangential convergence implies that $|y_n|\to+\infty$ and $|y_n|/|x_n|\to+\infty$. Let $I=[1,+\infty)$.
We first show that $\rho_{\CC_0}(\psi_2^n(0),I)\xrightarrow{n\to+\infty}+\infty$. It suffices to show that,
for all $\delta\geq 0$, there exists $n_0\in\NN$ such that, for all $n\geq n_0$, for all $x\geq 1$,
$$\left|\frac{x_n+iy_n-x}{x_n+iy_n+x}\right|^2\geq 1-\delta.$$
Let $\veps\in(0,1)$ and $n_1\in\NN$ be such that $n\geq n_1$ implies $x_n\leq\veps^2 |y_n|$. Let $x\geq 1$ and $n\geq n_1$. Assume first that $x\geq \veps |y_n|$. Then
$$|x-x_n|=x-x_n\geq (1-\veps)x\textrm{ and }|x_n+x|\leq (1+\veps)x$$
so that
$$
\left|\frac{x_n+iy_n-x}{x_n+iy_n+x}\right|^2\geq\frac{(1-\veps)^2 x^2+|y_n|^2}{(1+\veps)^2 x^2+|y_n|^2}\geq 1-\delta$$
provided $n$, hence  $|y_n|$, is large enough. Otherwise, $x\leq\veps|y_n|$ and 
$$\left|\frac{x_n+iy_n-x}{x_n+iy_n+x}\right|^2\geq\frac{|y_n|^2}{|y_n|^2+4\veps^2 |y_n|^2}\geq 1-\delta$$
provided $\veps>0$ is small enough.
Let us now conclude that the assumptions of Theorem \ref{thm:wellseparated} are satisfied. We write $\psi_1^m(0)=:u_m+iv_m$ and observe that 
$$\left|\frac{u_m+iv_m-u_m}{u_m+iv_m+u_m}\right|^2=\frac{|v_m|^2}{4|u_m|^2+|v_m|^2}$$
is bounded away from 1 since, by nontangential convergence, $|v_m|\leq M|u_m|$ for some $M>0$. In particular, $\rho_{\CC_0}(\psi_1^m(0),I)\leq C$ for some $C>0$. Using the triangle inequality, we get that
$\lim_{n\to+\infty}\inf_{m\in\NN}\rho_{\CC^+}\big(\psi_1^m(0),\psi_2^n(0)\big)=+\infty$. We conclude from Theorem \ref{thm:wellseparated} that $C_{\varphi_1}$ and $C_{\varphi_2}$ are disjointly frequently hypercyclic.
\end{proof}

\subsection{Two parabolic maps with the same boundary fixed point}

We investigate the case where $\varphi_1$ and $\varphi_2$ are regular parabolic self-maps of $\DD$ sharing the same attractive fixed point on the circle, say $+1$. 
We shall need the following result on the behaviour of the half-plane model of such a map.

\begin{lemma}\label{lem:orbitsparabolic}
 Let $\varphi$ be a univalent regular parabolic self-map of $\DD$ with $+1$ as Denjoy-Wolff point. Then for all $w\in\CC_0$, 
 $$\psi^n(w)=an+b\log n+B_n(w)$$
 where 
 \begin{align*}
  a&=\varphi''(1)\\
  b&=(\varphi''(1)^2-2\varphi^{(3)}(1)/3)/\varphi''(1)
 \end{align*}
and $B_n$ is bounded (independently of $n$) on each compact subset of $\CC_0$.
\end{lemma}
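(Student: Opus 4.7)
The plan is to transfer to the half-plane model, derive a sharp two-term asymptotic expansion of $\psi$ near its Denjoy-Wolff point $\infty$, and then iterate this expansion by telescoping, isolating the linear term $an$ and the logarithmic correction $b\log n$ via a bootstrap argument.

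Step 1. Using $\zeta = z-1 = -2/(w+1)$ and the identity $\psi(w)+1 = -2/(\varphi(z)-1)$, I would plug in the regularity expansion of $\varphi$ at $1$ and compute $(1+x)^{-1}$ with $x=(\varphi''(1)/2)\zeta+(\varphi^{(3)}(1)/6)\zeta^2+o(\zeta^{2+\veps})$. Since $-2/\zeta=w+1$ and $1/(w+1)=1/w+O(|w|^{-2})$, one obtains
\[
\psi(w) = w + a + \frac{ab}{w} + O(|w|^{-1-\veps}),\qquad w\to\infty \text{ in }\CC_0,
\]
with $a=\varphi''(1)$ and $ab=\varphi''(1)^2-2\varphi^{(3)}(1)/3$.

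Step 2. Fixing $w\in\CC_0$ and setting $w_n=\psi^n(w)$, the Denjoy-Wolff theorem gives $w_n\to\infty$, so $w_{k+1}-w_k\to a$ and Stolz-Cesàro yields $w_n/n\to a$, hence $|w_n|\sim n|a|$. Telescoping the recursion produces
\[
w_n = w + na + ab\sum_{k=0}^{n-1}\frac{1}{w_k} + O(1),
\]
the error absorbing the summable tail $\sum|w_k|^{-1-\veps}\lesssim\sum k^{-1-\veps}$ as well as the finitely many initial terms where the asymptotic does not yet apply. A two-step bootstrap then refines the main sum: the crude bound $w_k=ka+o(k)$ already yields $\sum 1/w_k=(\log n)/a+o(\log n)$, hence $w_n=na+b\log n+o(\log n)$; reinjecting this sharper bound gives $|w_k-ka|\lesssim\log k$, so $1/w_k-1/(ka)=O(\log k/k^2)$ is summable and the $o(\log n)$ remainder improves to $O(1)$. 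This defines a bounded-in-$n$ quantity $B_n(w):=w_n-an-b\log n$.

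Step 3. To promote this to uniform boundedness on a compact $K\subset\CC_0$, fix $w_0\in K$ and use the stability consequence of Step 1,
\[
|\psi(u)-\psi(v)|\leq |u-v|\Big(1+\frac{C}{|u||v|}\Big)+O(|u|^{-1-\veps})+O(|v|^{-1-\veps}).
\]
Iterated at $u=\psi^k(w)$, $v=\psi^k(w_0)$ with $|\psi^k(\cdot)|\gtrsim k$, the multiplicative factors accumulate to the convergent product $\prod_k(1+C/k^2)$ and the additive errors to $\sum_k k^{-1-\veps}<\infty$, so $|\psi^n(w)-\psi^n(w_0)|$ is uniformly bounded in $n$ and $w\in K$. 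Combined with the pointwise bound at $w_0$, this yields $|B_n(w)|\leq|B_n(w_0)|+O(1)$ uniformly on $K$.

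The main obstacle is the bootstrap in Step 2: a single substitution of $w_k\sim ka$ into $\sum 1/w_k$ only yields a soft $o(\log n)$ remainder, and reaching the sharp $O(1)$ error required by the statement demands reinjecting the first-order logarithmic refinement so as to exploit the summability of $\log k/k^2$. The uniformity step relies similarly on the quantitative stability estimate above rather than the mere hyperbolic contractivity of $\psi$; the latter would fail to control Euclidean distances in the tangential case $\varphi''(1)\in i\RR$, where the orbits travel along curves asymptotic to $\partial\CC_0$.
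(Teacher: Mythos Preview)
Your argument is correct and self-contained, but it takes a genuinely different route from the paper. The paper does not iterate the one-step expansion of $\psi$; instead it invokes the parabolic linear-fractional model of Bourdon--Shapiro: there is a holomorphic $\sigma$ on $\CC_0$ with $\sigma\circ\psi=\sigma+a$ and $\sigma(w)=w-b\log(w+1)+B(w)$ where $B$ is bounded on the relevant domain $\Omega$. Plugging $\psi^n(w)$ into this identity gives $\psi^n(w)-b\log(\psi^n(w)+1)+B(\psi^n(w))=\sigma(w)+an$, and since $1\lesssim|\psi^n(w)|\lesssim n$ uniformly on compacta (extracted from \cite{BoSh97}), one substitutes once more to turn $\log(\psi^n(w)+1)$ into $\log n+O(1)$. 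So the paper outsources both your bootstrap and your uniformity step to the existence and boundedness of the Abel map $\sigma$, whereas you in effect reconstruct the asymptotics of $\sigma$ by hand via telescoping. Your approach has the virtue of being elementary and not relying on the fine structure of \cite{BoSh97}; the paper's is shorter once that machinery is granted.

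One point to tighten in your Step~3: you invoke $|\psi^k(w)|\gtrsim k$ for $w\in K$, but Step~2 only gives this pointwise. The fix is cheap: by Denjoy--Wolff, $\psi^k\to\infty$ locally uniformly, so for $k\geq k_0$ all iterates of $K$ lie in the region $\{|w|\geq M\}$ where your Step~1 expansion holds with error $\leq |a|/2$; telescoping the crude bound $|\psi^{k+1}(w)-\psi^k(w)-a|\leq |a|/2$ then yields $|\psi^n(w)|\geq (n-k_0)|a|/2-\max_{K}|\psi^{k_0}|$, hence $|\psi^n(w)|\gtrsim n$ uniformly on $K$. With this in hand your product $\prod(1+C/k^2)$ and the additive tail $\sum k^{-1-\veps}$ are legitimately controlled.
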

\begin{proof}
 We shall use the parabolic model of \cite{BoSh97}: there exists $\sigma:\CC_0\to\CC$ holomorphic such that
 $\sigma\circ\psi=\sigma+a$ and $\sigma$ has the following expansion:
 \begin{equation}\label{eq:orbitsparabolic}
   \sigma(w)=w-b\log(w+1)+B(w)
 \end{equation}
 where $B$ is bounded on $\Omega$, with $\Omega=\CC_0$ if $\Re e(a)>0$ and $\Omega=\CC_0\cap\{\Im m(w)>0\}$
 if $\Re e(a)=0$ and $\Im m(a)>0$ (what we may always assume). Now, let $K$ be a compact subset 
 of $\CC_0$. It can be extracted from the proofs of \cite{BoSh97} that there exists $n_0\in\NN$
 such that $\psi^n(K)\subset \Omega$ for all $n\geq n_0$ and that $1\lesssim|\psi^n(w)|\lesssim n$ uniformly in $n$ and $w\in K$. Therefore, 
 $$\sigma(w)+an=\sigma\circ\psi^n(w)=\psi^n(w)-b\log(\psi^n(w))+C_n(w)$$
 where $|C_n(w)|\lesssim 1$, uniformly in $n$ and $w\in K$. This yields
 \begin{align*}
  \psi^n(w)&=an+b\log\big(\psi^n(w)\big)-C_n(w)+\sigma(w)\\
&=an+b\log(n)+b\log\left(a+\frac{b\log(\psi^n(w))-C_n(w)+\sigma(w)}n\right)-C_n(w)+\sigma(w)\\
  &=:an+b\log n+B_n(w)
 \end{align*}
 where $|B_n(w)|\lesssim 1$ uniformly in $n$ and $w\in K$.
\end{proof}


\begin{theorem}
 Let $\varphi_1,\varphi_2$ be univalent regular parabolic self-maps of $\DD$ with $+1$ as Denjoy-Wolff point. Assume that 
$(\varphi_1''(\alpha),\varphi_1^{(3)}(\alpha))\neq (\varphi_2''(\alpha),\varphi_2^{(3)}(\alpha))$. Then $C_{\varphi_1}$ 
and $C_{\varphi_2}$ are disjointly frequently hypercyclic on $H(\DD)$. 
\end{theorem}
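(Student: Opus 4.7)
The plan is to apply the criterion provided by Lemma \ref{lem:criterioncomposition}, using Corollary \ref{cor:setscoholpar} to construct the required family $(A_k)$. The key observation is that the hypothesis on the boundary derivatives of $\varphi_1$ and $\varphi_2$ translates, via Lemma \ref{lem:orbitsparabolic}, into a separation at infinity between the two orbit sequences $(\psi_1^n(w))$ and $(\psi_2^n(w))$ in the half-plane, despite the logarithmic correction $b\log n$ that arises from the parabolic model.

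First I would move to the half-plane via the Cayley map and invoke Lemma \ref{lem:orbitsparabolic}, which gives $\psi_i^n(w)=a_in+b_i\log n+B_{n,i}(w)$, where
$$a_i=\varphi_i''(1),\quad b_i=\bigl(\varphi_i''(1)^2-2\varphi_i^{(3)}(1)/3\bigr)/\varphi_i''(1),$$
with $a_i\in\CC^*$ by regularity, and $B_{n,i}$ bounded uniformly in $n$ on each compact subset of $\CC_0$. Since the map $(\varphi_i''(1),\varphi_i^{(3)}(1))\mapsto(a_i,b_i)$ is a bijection of $\CC^*\times\CC$ onto itself, the hypothesis of the theorem gives $(a_1,b_1)\neq(a_2,b_2)$ in $\CC^*\times\CC$.

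Next, fix an increasing sequence of compact discs $(K_k)$ covering $\DD$ and set $L_k=\mathcal C(K_k)\subset\CC_0$. Define
$$M_k=\sup_{n\geq 1,\,i\in\{1,2\},\,w\in L_k}|B_{n,i}(w)|<+\infty,$$
and choose $N_k\geq M_k+1$. Apply Corollary \ref{cor:setscoholpar} to the pairs $(a_i,b_i)$ and the sequence $(N_k)$ to obtain pairwise disjoint sets $A_k\subset\NN$ with $\ldens(A_k)>0$, $\min(A_k)\geq N_k$, and such that for all $k,l\geq 1$, all $(n,m)\in A_k\times A_l$ and all $(i,j)\in\{1,2\}^2$ with $(i,n)\neq(j,m)$,
$$\bigl|a_in+b_i\log n-a_jm-b_j\log m\bigr|\geq N_k+N_l.$$

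Finally I would verify condition (b) of Lemma \ref{lem:criterioncomposition}. Given two distinct triples $(i,k,n)$ and $(j,l,m)$ with $n\in A_k$ and $m\in A_l$, pairwise disjointness of the $A_k$ forces $(i,n)\neq(j,m)$ in every case. Then for $w\in L_k$ and $w'\in L_l$ the triangle inequality yields
$$|\psi_i^n(w)-\psi_j^m(w')|\geq\bigl|a_in+b_i\log n-a_jm-b_j\log m\bigr|-M_k-M_l\geq 2,$$
so $\psi_i^n(L_k)\cap\psi_j^m(L_l)=\varnothing$, and transferring back through $\mathcal C^{-1}$ yields the desired disjointness of $\varphi_i^n(K_k)$ and $\varphi_j^m(K_l)$. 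Condition (a) is given for free by $\ldens(A_k)>0$, and Lemma \ref{lem:criterioncomposition} concludes. The main technical step is the uniform control of the remainder $B_{n,i}$ in $n$ on compact sets, which is precisely what the parabolic linear fractional model of \cite{BoSh97}, encoded in Lemma \ref{lem:orbitsparabolic}, delivers; this is why the regularity hypothesis on $\varphi_i$ enters the picture.
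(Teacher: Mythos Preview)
Your proposal is correct and follows essentially the same approach as the paper: apply Lemma \ref{lem:orbitsparabolic} to obtain $\psi_i^n(w)=a_in+b_i\log n+B_{n,i}(w)$ with $(a_1,b_1)\neq(a_2,b_2)$, bound the remainders on compacta, invoke Corollary \ref{cor:setscoholpar} to produce the separating sets $(A_k)$, and conclude via Lemma \ref{lem:criterioncomposition}. Your write-up is in fact slightly more detailed than the paper's (you make explicit the injectivity of $(\varphi''(1),\varphi^{(3)}(1))\mapsto(a,b)$ and the triangle-inequality verification of disjointness). One minor caveat: Corollary \ref{cor:setscoholpar} as stated does not assert that the $A_k$ are pairwise disjoint, which you use; this is however easily arranged by feeding pairwise disjoint $B_k$ into the underlying Theorem \ref{thm:sets}, and the paper is equally informal on this point.
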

\begin{proof}
 In view of Lemma \ref{lem:orbitsparabolic}, we know that the half-plane models $\psi_i$ satisfy
 $$\psi_i^n(w)=a_i n+b_i\log n+B_{n,i}(w)$$
 with $(a_1,b_1)\neq (a_2,b_2)$ and $B_{n,i}$ uniformly bounded on each compact subset of $\CC_0$. 
 Let $(K_k)$ be a increasing sequence of compact discs of $\CC_0$ such that $\CC_0=\bigcup_k K_k$ and let $(N_k)$
 be a sequence of positive integers verifying 
 $$N_k\geq \sup_{i=1,2}\sup_n\max_{w\in K_k} |B_{n,i}(w)|.$$
 Applying Corollary \ref{cor:setscoholpar}, we get sets $(A_k)\subset\Ald$ satisfying \eqref{eq:corsetscoholpar1}.
 These conditions clearly imply that the sets $\psi_i^n(K_k)$, $i=1,2$, $k\in\NN$, $n\in A_k$, are pairwise disjoint,
 which in turn leads to the desired statement.
\end{proof}

\begin{theorem}\label{thm:coholparsamesame}
 Let $\varphi_1,\varphi_2$ be univalent regular parabolic self-maps of $\DD$ with $+1$ as Denjoy-Wolff point. Assume that 
$(\varphi_1''(\alpha),\varphi_1^{(3)}(\alpha))= (\varphi_2''(\alpha),\varphi_2^{(3)}(\alpha))$. Then $C_{\varphi_1}$ 
and $C_{\varphi_2}$ are not disjointly hypercyclic on $H(\DD)$. 
\end{theorem}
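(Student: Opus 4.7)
The plan is to derive a contradiction with disjoint hypercyclicity by exploiting that, in the half-plane model, the orbits of $\psi_1$ and $\psi_2$ stay at uniformly bounded Euclidean distance while Koebe's $1/4$-theorem (applied to the univalent map $\psi_1^n$) forces the image of $\psi_2$'s orbit to sit inside a slightly enlarged image of $\psi_1$'s orbit. First, applying Lemma \ref{lem:orbitsparabolic} to both $\varphi_1,\varphi_2$ with their common pair $(a,b)$ yields $\psi_1^n(w)-\psi_2^n(w)=B_{n,1}(w)-B_{n,2}(w)$, bounded by some $M_K>0$ uniformly on any compact $K\subset\CC_0$.

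Next I would establish that $(\psi_i^n)'(w)\to\sigma_i'(w)\neq 0$ uniformly on compacta. For this I would use the parabolic model $\sigma_i\circ\psi_i=\sigma_i+a$ of \cite{BoSh97}, which gives $(\psi_i^n)'(w)=\sigma_i'(w)/\sigma_i'(\psi_i^n(w))$, so the question reduces to $\sigma_i'(\psi_i^n(w))\to 1$ along the orbit. Using the expansion $\sigma_i(z)=z-b\log(z+1)+B_i(z)$ with $B_i$ bounded on the model domain $\Omega_i$, the Cauchy estimate $|B_i'(z)|\leq\|B_i\|_\infty/\dist(z,\partial\Omega_i)$ tends to $0$ because $\dist(\psi_i^n(w),\partial\Omega_i)\to+\infty$ (linearly in $n$ when $\Re e(a)>0$, like $\Re e(b)\log n$ in the tangential case), while $b/(\psi_i^n(w)+1)\to 0$. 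Combined with $\sigma_i'\neq 0$ (univalence of $\sigma_i$), this gives a lower bound $|(\psi_1^n)'(w)|\geq c_K>0$ on $K$ for $n$ large.

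For the contradiction, suppose $f\in H(\DD)$ is disjointly hypercyclic, set $F=f\circ\mathcal{C}^{-1}$, put $R:=4M_K/c_K$, and choose $K$ far enough from $\partial\CC_0$ that $L:=K+\overline{D(0,R)}\subset\CC_0$ (and $\subset\Omega_1\cap\Omega_2$ in the tangential case). By density of $\{(f\circ\varphi_1^n,f\circ\varphi_2^n)\}_n$ in $H(\DD)^2$, extract $(n_k)$ with $f\circ\varphi_1^{n_k}\to 0$ and $f\circ\varphi_2^{n_k}\to 1$ uniformly on $\mathcal{C}^{-1}(L)$, i.e.\ $\|F\|_{\psi_1^{n_k}(L)}\to 0$ and $\|F-1\|_{\psi_2^{n_k}(L)}\to 0$. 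For each $w\in K$ and $k$ large, Koebe's $1/4$-theorem applied to the univalent map $\psi_1^{n_k}$ on the disc $D(w,R)\subset L$ gives
\[
 \psi_1^{n_k}(D(w,R))\supset D\!\bigl(\psi_1^{n_k}(w),R|(\psi_1^{n_k})'(w)|/4\bigr)\supset D(\psi_1^{n_k}(w),M_K)\ni\psi_2^{n_k}(w),
\]
so $\psi_2^{n_k}(K)\subset\psi_1^{n_k}(L)$, and therefore $\|F\|_{\psi_2^{n_k}(K)}\leq\|F\|_{\psi_1^{n_k}(L)}\to 0$, contradicting $\|F-1\|_{\psi_2^{n_k}(K)}\to 0$.

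The main obstacle will be the derivative-convergence step, in particular making the Cauchy bound on $B_i'$ effective in the tangential case where $\Re e(\psi_i^n(w))$ only grows logarithmically; this relies on the explicit description of $\Omega_i$ from \cite{BoSh97}. A minor bookkeeping matter is then to place $K$ with enough interior room that $L\subset\CC_0$ (resp.\ $\Omega_1\cap\Omega_2$), which is always possible by choosing $K$ with sufficiently large real and imaginary parts, the bounds $M_K$ and $c_K$ remaining controlled in that regime.
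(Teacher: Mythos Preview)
Your approach is essentially the paper's: both bound $|\psi_1^n(w)-\psi_2^n(w)|$ via Lemma~\ref{lem:orbitsparabolic}, bound $|(\psi_i^n)'|$ from below using the functional equation $\sigma_i\circ\psi_i^n=\sigma_i+na$ together with $\sigma_i'\approx 1$ near infinity in $\Omega$, and then apply Koebe's $1/4$-theorem to force $\psi_1^n(D)\cap\psi_2^n(D)\neq\varnothing$ for a suitable disc $D$. The paper streamlines your ``main obstacle'' by working at a single base point $w_0=x+ix$ with $x$ large and observing that the quarter-plane $\{\Re e(w)>R,\ \Im m(w)>R\}$ is $\psi_i$-stable for large $R$, so the orbit never leaves the region where $|\sigma_i'-1|$ is small---note also that you only need a uniform lower bound $|(\psi_1^n)'|\geq c>0$, not actual convergence, so it suffices that $\dist(\psi_i^n(w),\partial\Omega_i)$ stay bounded away from zero rather than tend to $+\infty$.
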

\begin{proof}
 Again, for $i=1,2$, we write
 $$\psi_i^n(w)=an+b\log n+B_{n,i}(w)$$
 with $B_{n,i}$ uniformly bounded on $\Omega$, where $\Omega$ is either the right half-plane or the upper right quarter, and let $\sigma_i$ satisfying \eqref{eq:orbitsparabolic}. 
 Let $C\geq \sup(|B_{n,1}(w)-B_{n,2}(w)|:\ n\in\NN,\ w\in\Omega)$, let $x>32C$ and consider $w_0=x+ix$.
 Observe that Cauchy's formula and \eqref{eq:orbitsparabolic} imply that, for all $\veps>0$, there exists $R>0$ such that
 $|\sigma'(w)-1|<\veps$ provided $\Re e(z)>R$ if $\Omega=\CC_0$ or provided $\Re e(w)>R$ and $\Im m(w)>R$ if
 $\Omega=\{w\in\CC_0:\ \Im m(w)>0\}$. Since the domain $\{w:\ \Re e(w)>R\textrm{ and }\Im m(w)>R\}$ is $\psi_i$-stable provided $R$ is large enough, we may and shall assume that $x$ is so large that, for all $n\in\NN$, 
 $|\sigma_i'(w_0)|/|\sigma_i'(\psi^n(w_0))|\geq 1/2$. Now the parabolic model tells us that $\sigma_i\circ\psi_i^n=\sigma_i+an$.
 Differentiating this equality at $w_0$ entails $|(\psi_i^n)'(w_0)|\geq 1/2$. 
 By Koebe 1/4-theorem, 
 $$\psi_i^n(D(w_0,x/2))\supset D(\psi_i^n(w_0),|(\psi_i^n)'(w_0)|x/8)\supset D(\psi_i^n(w_0),2C).$$
 Since $|\psi_1^n(w_0)-\psi_2^n(w_0)|\leq C$, the sets $\psi_1^n(D(w_0,x/2))$ and $\psi_2^n(D(w_0,x/2))$ are never disjoint,
 which prevents $C_{\psi_1}$ and $C_{\psi_2}$ to be disjointly hypercyclic.  Indeed it is impossible to find $f\in\mathcal H(\CC_0)$ 
 such that $|f\circ\psi_1^n-1|<1/2$ and $|f\circ\psi_2^n-2|<1/2$
 on $D(w_0,x/2)$ for some $n\geq 1$ (see also \cite[Theorem 2.1]{BeMa12})
\end{proof}

\subsection{Two hyperbolic maps with the same boundary fixed point}
We assume in this subsection that $\varphi_1$ and $\varphi_2$ are two univalent hyperbolic self-maps of $\DD$ with the same attractive fixed point $\alpha\in\TT$. 
It is known (see \cite{BeMaPe11}) that, provided $\varphi_1$ and $\varphi_2$ are linear fractional maps of $\DD$,  $C_{\varphi_1}$ and $C_{\varphi_2}$ are disjointly hypercyclic on $H(\mathbb D)$ if and only if $\varphi_1'(\alpha)\neq\varphi_2'(\alpha)$. 

We show that the condition $\varphi_1'(\alpha)\neq\varphi_2'(\alpha)$ is sufficient to ensure disjoint frequent hypercyclicity, without assuming any extra regularity condition
on $\varphi_1$ or $\varphi_2$.

\begin{theorem}\label{thm:hyperbolicsameattractive}
Let $\varphi_1$, $\varphi_2$ be univalent self-maps of $\DD$ with the same Denjoy-Wolff point $\alpha\in\TT$. 
Assume that $\varphi_1$ and $\varphi_2$ are both hyperbolic and that $\varphi_1'(\alpha)\neq\varphi_2'(\alpha)$. Then $C_{\varphi_1}$ and $C_{\varphi_2}$ are disjointly frequently hypercyclic.
\end{theorem}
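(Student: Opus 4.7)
The plan is to apply Lemma~\ref{lem:criterioncomposition}, using Corollary~\ref{cor:sets} to exploit the different exponential rates at which the orbits of $\varphi_1$ and $\varphi_2$ approach $\alpha$. Pass to the half-plane via the Cayley transform $\mathcal C$ with $\mathcal C(\alpha)=\infty$ and set $\psi_i=\mathcal C\circ\varphi_i\circ\mathcal C^{-1}$. By the Valiron/Koenigs linearization theorem for univalent hyperbolic maps there is, for each $i$, a univalent intertwining $\tau_i:\CC_0\to\CC_0$ satisfying $\tau_i\circ\psi_i=\lambda_i^{-1}\tau_i$, hence $\tau_i(\psi_i^n(w))=\lambda_i^{-n}\tau_i(w)$ for all $n\in\NN$.

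The central quantitative claim is that for every compact $K\subset\CC_0$ there exists $C_K>0$ such that
\begin{equation*}
\bigl|\rho_{\CC_0}(\psi_i^n(w),1)-n\log(1/\lambda_i)\bigr|\leq C_K\quad\text{for all }w\in K,\ n\in\NN,\ i\in\{1,2\}.
\end{equation*}
This rests on four ingredients: (a) $|\tau_i|$ and $\Re\tau_i$ are bounded above and below on $K$; (b) the Julia-Wolff-Carath\'eodory theorem forces $(\psi_i^n(w))$ to escape to $\infty$ nontangentially; (c) the Valiron identity $|\tau_i(\psi_i^n(w))|=\lambda_i^{-n}|\tau_i(w)|$, together with the nontangential asymptotic linearity of $\tau_i$ at $\infty$, yields $|\psi_i^n(w)|\asymp\lambda_i^{-n}$ uniformly on $K$; and (d) in $\CC_0$ one has $\rho_{\CC_0}(\zeta,1)=\log|\zeta|+O(1)$ as $\zeta\to\infty$ in any fixed Stolz angle.

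Assume $\lambda_2<\lambda_1$ (otherwise swap). The functions $\phi_1(n):=n\log(1/\lambda_1)$ and $\phi_2(n):=n\log(1/\lambda_2)$ are increasing with $\phi_2-\phi_1\to+\infty$. Pick an exhaustion $(K_k)$ of $\CC_0$ by closed hyperbolic balls of radius $R_k\uparrow+\infty$ centered at $1$, and set $N_k:=R_k+C_0+1$, with $C_0:=C_{\{1\}}$ the constant of the claim for $K=\{1\}$. Corollary~\ref{cor:sets}, applied with $B_k=\NN$ and this data, produces $(A_k)\subset\Ald$ with $\min A_k\geq N_k$ and
\begin{equation*}
(i,n)\neq(j,m),\ n\in A_k,\ m\in A_l\implies |\phi_i(n)-\phi_j(m)|\geq N_k+N_l.
\end{equation*}
For such distinct $(i,n),(j,m)$, the reverse triangle inequality and the claim at $w=1$ give $\rho_{\CC_0}(\psi_i^n(1),\psi_j^m(1))\geq|\phi_i(n)-\phi_j(m)|-2C_0\geq R_k+R_l+2$, while Schwarz-Pick ensures $\rho_{\CC_0}(\psi_i^n(w),\psi_i^n(1))\leq R_k$ whenever $w\in K_k$. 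Thus $\psi_i^n(K_k)\cap\psi_j^m(K_l)=\varnothing$. Transferring back to $\DD$ via $\mathcal C^{-1}$, the hypotheses of Lemma~\ref{lem:criterioncomposition} are met, concluding the proof.

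The main obstacle is the upper half of the uniform estimate: proving $\rho_{\CC_0}(\psi_i^n(w),1)\leq n\log(1/\lambda_i)+C_K$. Absent any regularity at $\alpha$, this requires showing $|\tau_i(z)|\asymp|z|$ along the escape direction, so that the multiplicative scaling in the Valiron coordinate transfers to comparable growth of $|\psi_i^n(w)|$. This delicate point follows from the self-similarity relation $\tau_i(\psi_i(z))=\lambda_i^{-1}\tau_i(z)$ combined with the univalence of $\tau_i$ and Julia-Wolff-Carath\'eodory.
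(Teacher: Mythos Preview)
Your central quantitative claim is false in the stated generality, and this is not a technical gap but the heart of the matter. You assert that
\[
\bigl|\rho_{\CC_0}(\psi_i^n(w),1)-n\log(1/\lambda_i)\bigr|\leq C_K,
\]
which amounts to $|\psi_i^n(w)|\asymp\lambda_i^{-n}$ with constants independent of $n$. But the paper itself exhibits (in the example following Theorem~\ref{thm:coholhypsamesame}) the map $\psi_2(w)=2w\bigl(1+1/\log(w+3)\bigr)$, for which $\psi_2^n(1)=q_n2^n$ with $q_n\to+\infty$. Here $\lambda_2^{-1}=2$, yet $\rho_{\CC_0}(\psi_2^n(1),1)-n\log 2\sim\log q_n\to+\infty$. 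Your appeal to ``nontangential asymptotic linearity of $\tau_i$ at $\infty$'' is precisely the statement that the Valiron intertwiner has finite nonzero angular derivative at the Denjoy--Wolff point; this is known to require extra regularity of the symbol (this is what \cite{BrPo03} addresses, and the paper invokes it only under the regularity hypothesis in Theorem~\ref{thm:coholhypsamesame}). The self-similarity relation and univalence of $\tau_i$ alone do not force $|\tau_i(z)|\asymp|z|$ in any Stolz angle.

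The paper's proof copes with exactly this difficulty: Lemma~\ref{lem:separedhyperbolic} only yields, for each $\eta>0$, the much weaker containment $\varphi_i^n(K)\subset\{z:\ ((1-\eta)\lambda_i)^n\leq 1-|z|\leq((1+\eta)\lambda_i)^n\}$, i.e.\ an additive error of order $\eta n$ in the logarithmic scale rather than $O(1)$. To separate the two orbits with such weak control, the paper does \emph{not} use Corollary~\ref{cor:sets}; instead it restricts all indices to the gap set $B=\NN\cap\bigcup_{p\geq 0}[\omega^p,s\omega^p]$ with $r=\log\lambda_2/\log\lambda_1$, $s$, $\omega$, $\delta$ chosen so that $(1+\delta)s<(1-\delta)r$ and $r(1+\delta)<(1-\delta)\omega$. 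For $n,m\in B$ the intervals $[n(1-\delta),n(1+\delta)]$ and $[mr(1-\delta),mr(1+\delta)]$ can never meet, which is what forces $\varphi_1^n(D_k)\cap\varphi_2^m(D_l)=\varnothing$. Your Corollary~\ref{cor:sets} approach would work if the $O(1)$ estimate held, but since it does not, the argument collapses.
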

As before, the proof will rely on Lemma \ref{lem:criterioncomposition} and a separation lemma for the iterates of $\varphi_1$ and $\varphi_2$.
\begin{lemma}\label{lem:separedhyperbolic}
Let $K$ be a compact subset of $\DD$ and let $\eta>0$. Let $\varphi$ be a univalent and hyperbolic self-map of $\DD$ with Denjoy-Wolff point $\alpha\in\TT$. There exists $N\in\NN$ such that, for all $n\geq N$, 
$$\varphi^n(K)\subset\left\{z\in\DD:\ \big((1-\eta)\varphi'(\alpha)\big)^n\leq 1-|z|\leq \big((1+\eta)\varphi'(\alpha)\big)^n\right\}.$$
\end{lemma}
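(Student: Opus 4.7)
The plan is to reduce the uniform bound on $K$ to a single-orbit estimate via hyperbolic contraction, and then to show that $u_n := 1 - |\varphi^n(0)|$ satisfies $((1-\eta_1)\lambda)^n \leq u_n \leq ((1+\eta_1)\lambda)^n$ for a slightly smaller $\eta_1 < \eta$ and all sufficiently large $n$, where $\lambda = \varphi'(\alpha)$. Since $\varphi$ is a $\rho_\DD$-contraction, one has $\rho_\DD(\varphi^n(z),\varphi^n(0)) \leq \rho_\DD(z,0) \leq R(K)$ for every $z \in K$, so the ratio $(1-|\varphi^n(z)|)/(1-|\varphi^n(0)|)$ remains in a fixed bounded interval $[c(K), C(K)]$ independent of $n$; these multiplicative constants are absorbed once $n$ is large, since $\eta_1 < \eta$.

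For the upper bound on $u_n$, I would iterate Julia's inequality
\[
\frac{|\alpha-\varphi(z)|^2}{1-|\varphi(z)|^2} \leq \lambda \frac{|\alpha-z|^2}{1-|z|^2}
\]
from $z = 0$, obtaining $|\alpha - \varphi^n(0)|^2 \leq \lambda^n (1 - |\varphi^n(0)|^2)$. Combined with the trivial inequality $|\alpha - \varphi^n(0)| \geq 1 - |\varphi^n(0)|$, this yields $u_n \leq 2 \lambda^n$, hence $u_n \leq ((1+\eta_1)\lambda)^n$ for $n$ large.

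The lower bound is the core of the argument. Applying the Schwarz--Pick lemma to $\varphi$ at the point $\varphi^n(0)$ gives
\[
\frac{u_{n+1}}{u_n} \cdot \frac{1+|\varphi^{n+1}(0)|}{1+|\varphi^n(0)|} = \frac{1-|\varphi^{n+1}(0)|^2}{1-|\varphi^n(0)|^2} \geq |\varphi'(\varphi^n(0))|.
\]
Two classical facts then come in: the hyperbolicity of $\varphi$ forces the orbit $(\varphi^n(0))$ to converge non-tangentially to $\alpha$ (as recalled in the excerpt), and the Julia--Carath\'eodory theorem guarantees that $\varphi'(z)$ has non-tangential limit $\lambda$ at $\alpha$. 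Together these give $|\varphi'(\varphi^n(0))| \to \lambda$, while the factor $(1+|\varphi^n(0)|)/(1+|\varphi^{n+1}(0)|) \to 1$, so $u_{n+1}/u_n \geq \lambda(1-\veps_n)$ with $\veps_n \to 0$. Telescoping from an index $N$ such that $\veps_n < \eta_1$ for $n \geq N$ yields $u_n \geq u_N \cdot ((1-\eta_1)\lambda)^{n-N}$, which exceeds $((1-\eta_1)\lambda)^n$ for $n$ large.

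The only genuinely nontrivial step is the convergence $|\varphi'(\varphi^n(0))| \to \lambda$, which fuses the Julia--Carath\'eodory theorem with the non-tangential convergence of hyperbolic orbits; everything else amounts to a telescoping computation and hyperbolic-distance bookkeeping.
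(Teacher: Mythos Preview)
Your proof is correct and follows essentially the same approach as the paper: reduce from $K$ to the single orbit $(\varphi^n(0))$ via the Schwarz--Pick contraction (the paper computes the Euclidean disc explicitly, you use the equivalent fact that bounded hyperbolic distance gives bounded ratio of $1-|z|$), then establish $u_{n+1}/u_n \to \lambda$ by combining Julia--Carath\'eodory with the nontangential convergence of hyperbolic orbits. The only cosmetic difference is that the paper invokes the form of Julia--Carath\'eodory asserting $\frac{1-|\varphi(z)|}{1-|z|} \to \lambda$ nontangentially to get both bounds at once, whereas you obtain the upper bound separately via iterated Julia inequality and the lower bound via Schwarz--Pick plus the $\varphi' \to \lambda$ form; these are equivalent packagings of the same circle of ideas.
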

\begin{proof}
We may assume that $K=\overline{D}(0,R)$. By the Schwarz-Pick lemma, $\varphi^n(K)$ is contained in the pseudo-hyperbolic disc 
$$\left\{z\in \DD:\ \frac{|z-\varphi^n(0)|}{|1-\overline{\varphi^n(0)}z|}<R\right\}.$$
This disc is nothing else than the euclidean disc with center $w_n=\frac{(1-R^2)\varphi^n(0)}{1-R^2|\varphi^n(0)|^2}$ and radius $\rho_n=\frac{R(1-|\varphi^n(0)|^2)}{1-R^2 |\varphi^n(0)|^2}$.
Let us write $|\varphi^n(0)|=1-\veps_n$. Then the previous formulae give
$$|w_n|=\frac{(1-R^2)(1-\veps_n)}{1-R^2(1-2\veps_n+o(\veps_n))}=1-\frac{1+R^2}{1-R^2}\veps_n+o(\veps_n)$$
and 
$$\rho_n=\frac{R(1-(1-\veps_n+o(\veps_n)))}{1-R^2(1-2\veps_n+o(\veps_n))}=\frac{R}{1-R^2}\veps_n+o(\veps_n).$$
Therefore we find that there exist $C_1(R)$ and $C_2(R)>0$ such that $\varphi^n(K)$ is contained in the corona 
$$\left\{z\in \DD:\ C_1(R)\veps_n\leq 1-|z|\leq C_2(R)\veps_n \right\}.$$

By the Julia-Caratheodory theorem, since $(\varphi^n(0))$ converges nontangentially to zero, we know that
$$\frac{1-|\varphi^{n+1}(0)|}{1-|\varphi^n(0)|}\xrightarrow{n\to+\infty}\varphi'(\alpha),\ \textrm{namely }\frac{\veps_{n+1}}{\veps_n}\xrightarrow{n\to+\infty}\varphi'(\alpha).$$
This yields
$$\left(\left(1-\frac\eta2\right)\varphi'(\alpha)\right)^n\lesssim \veps_n\lesssim \left(\left(1+\frac\eta2\right)\varphi'(\alpha)\right)^n$$
which in turn establishes the lemma.
\end{proof}

\begin{proof}[Proof of Theorem \ref{thm:hyperbolicsameattractive}]
We fix a sequence $(R_k)$ of positive real numbers tending to 1 and we set $D_k=\overline{D}(0,R_k)$. We may assume that $\varphi_2'(\alpha)<\varphi_1'(\alpha)$ and let $r>1$ be such that $\varphi_2'(\alpha)=\big(\varphi_1'(\alpha)\big)^r$. Let $\delta>0$, $s>1$, $\omega>1$ be such that $(1+\delta)s<(1-\delta)r$ and $r(1+\delta)<(1-\delta)\omega$. We fix a sequence $(N_k)$ of positive integers such that 
\begin{itemize}
\item for all $n\geq N_k$, for $i=1,2$, 
\begin{equation}\label{eq:hyperbolicsameattractive1}
\varphi_i^n(D_k)\subset\left\{z\in\DD:\ \big(\varphi_i'(\alpha)\big)^{(1+\delta)n}\leq 1-|z|\leq \big(\varphi_i'(\alpha)\big)^{(1-\delta)n}\right\}
\end{equation}
(this follows from the previous lemma with $\eta>0$ such that $(1+\eta)\varphi_i'(\alpha)\leq (\varphi_i'(\alpha))^{1-\delta}$ and $(1-\eta)\varphi_i'(\alpha)\leq (\varphi_i'(\alpha))^{1+\delta}$).
\item for all $n\geq N_k$, for $i=1,2$, 
$$\varphi_i^n(D_k)\cap D_k=\varnothing.$$
\end{itemize}
Let also $B=\NN\cap\bigcup_{p\geq 0}[\omega^p,s\omega^p]$. We then consider a family $(A_k)$ of subsets of $B$ with positive lower density such that, for all $k,l\geq 1$, $\min(A_k)\geq k$ and for every $n\in A_k$, $m\in A_l$ with $n\neq m$, then $|n-m|\geq N_k+N_l$. We may conclude exactly as before if we are able to prove that, for all $k,l\geq 1$, all $n\in A_k$, $m\in A_l$ and $i,j\in\{1,2\}$ with $(n,i)\neq (m,j)$, the sets $\varphi_i^n(D_k)$ and $\varphi_j^m(D_l)$ are disjoint. Assume first that $i=j$ and thus $n\neq m$, for instance $n>m$. Then $\varphi_i^{n}(D_k)\cap\varphi_i^m(D_l)=\varnothing$: indeed $D_{\max(k,l)}\cap \varphi_i^{n-m}(D_{\max(k,l)})=\varnothing$ since $n-m\geq \max(N_k,N_l)$. Assume now that $i=1,\ j=2$ and towards a contradiction that $\varphi_1^n(D_k)\cap\varphi_2^m(D_l)\neq\varnothing$. By \eqref{eq:hyperbolicsameattractive1}, the intervals 
$$[\big(\varphi_1'(\alpha)\big)^{(1+\delta)n},\big(\varphi_1'(\alpha)\big)^{(1-\delta)n}]\textrm{ and }
[\big(\varphi_2'(\alpha)\big)^{(1+\delta)m},\big(\varphi_2'(\alpha)\big)^{(1-\delta)m}]$$
are not disjoint. Taking the logarithm and using that $\varphi_2'(\alpha)=\big(\varphi_1'(\alpha)\big)^r$, we get that the intervals 
$[n(1-\delta),n(1+\delta)]$ and $[mr(1-\delta),mr(1+\delta)]$ have to intersect. Let $p,q$ be such that $n\in[\omega^p,s\omega^p]$ and $m\in [\omega^q,s\omega^q]$. Then the intervals $[(1-\delta)\omega^p,(1+\delta)s\omega^p]$ and $[r(1-\delta)\omega^q,r(1+\delta)\omega^q]$ have to intersect. The conditions imposed on $\omega$, $s$ and $\delta$ prevent this.
\end{proof}

We now turn to the case where $\varphi_1$ and $\varphi_2$ are hyperbolic maps of $\DD$, with the same Denjoy-Wolff point $\alpha\in\TT$, 
and the same angular derivative $\varphi'_1(\alpha)=\varphi_2'(\alpha)$. We first extend the result of \cite{BeMaPe11} beyond linear fractional maps,
assuming that $\varphi_1$ and $\varphi_2$ are regular at their Denjoy-Wolff point.

\begin{theorem}\label{thm:coholhypsamesame}
 Let $\varphi_1,\ \varphi_2$ be univalent hyperbolic regular self-maps of $\DD$ with the same Denjoy-Wolff point $\alpha\in\TT$. Assume that $\varphi_1'(\alpha)=\varphi_2'(\alpha)$.
 Then $C_{\varphi_1}$ and $C_{\varphi_2}$ are not disjointly hypercyclic on $H(\DD)$.
\end{theorem}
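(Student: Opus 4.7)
My plan is to adapt the proof of Theorem \ref{thm:coholparsamesame}: on the half-plane side, the orbits $(\psi_1^n(w_0))$ and $(\psi_2^n(w_0))$ of any suitably chosen point $w_0$ should stay close enough that Koebe distortion forces the images $\psi_1^n(D(w_0,r))$ and $\psi_2^n(D(w_0,r))$ to overlap for every sufficiently large $n$, which kills any candidate for disjoint hypercyclicity. Passing to $\CC_0$ via $\psi_i=\mathcal{C}\circ\varphi_i\circ\mathcal{C}^{-1}$, both $\psi_1,\psi_2$ are regular hyperbolic self-maps of $\CC_0$ with common Denjoy-Wolff point $\infty$ and common multiplier $\lambda^{-1}>1$, where $\lambda=\varphi_1'(\alpha)=\varphi_2'(\alpha)$. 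The regular hyperbolic (Valiron) model of \cite{BoSh97} gives, for each $i$, a holomorphic function $\sigma_i$ on a Stolz-type region $\Omega\subset\CC_0$ at $\infty$ (say $\{\Re e(w)>R_0\}$, large enough to absorb every forward orbit starting from a point with large real part) satisfying
\[
\sigma_i\circ\psi_i=\lambda^{-1}\sigma_i,\qquad \sigma_i(w)=w+B_i(w),
\]
with $B_i$ holomorphic and bounded on $\Omega$, say $\|B_i\|_\infty\leq M$. Extracting this precise expansion from the regularity of $\varphi_i$ at $\alpha$ is the main technical obstacle, and is the hyperbolic analogue of Lemma \ref{lem:orbitsparabolic}.

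Iterating the intertwining yields $\psi_i^n(w)=\lambda^{-n}(w+B_i(w))-B_i(\psi_i^n(w))$, so for $w_0$ with $\Re e(w_0)$ large enough
\[
|\psi_1^n(w_0)-\psi_2^n(w_0)|\leq 2M\lambda^{-n}+2M\leq 4M\lambda^{-n}\qquad(n\geq 0).
\]
Differentiating the functional equation and using Cauchy's estimates to see that $\sigma_i'(w)\to 1$ as $\Re e(w)\to\infty$, I also obtain $|(\psi_i^n)'(w_0)|\geq\lambda^{-n}/2$ for every $n\geq 0$, provided $\Re e(w_0)$ is sufficiently large. Writing $x=\Re e(w_0)$ and applying Koebe's $1/4$-theorem to the univalent maps $\psi_i^n$ on the disc $D(w_0,x/2)\subset\CC_0$,
\[
\psi_i^n\bigl(D(w_0,x/2)\bigr)\supset D\bigl(\psi_i^n(w_0),\,|(\psi_i^n)'(w_0)|\,x/8\bigr)\supset D\bigl(\psi_i^n(w_0),\,\lambda^{-n}x/16\bigr).
\]

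Choosing $x>32M$ makes the sum of the two Koebe radii strictly exceed $|\psi_1^n(w_0)-\psi_2^n(w_0)|$, so the image discs $\psi_1^n(D(w_0,x/2))$ and $\psi_2^n(D(w_0,x/2))$ intersect for every $n\geq 0$. Were $C_{\varphi_1}$ and $C_{\varphi_2}$ disjointly hypercyclic, some $f\in H(\DD)$ and some $n\geq 1$ would realize $|F\circ\psi_1^n-1|<1/2$ and $|F\circ\psi_2^n-2|<1/2$ on $D(w_0,x/2)$, where $F=f\circ\mathcal{C}^{-1}$; evaluating $F$ at any point $w^*$ in the non-empty intersection of the two image discs then produces the contradictory pair of inequalities $|F(w^*)-1|<1/2$ and $|F(w^*)-2|<1/2$, so $C_{\varphi_1}$ and $C_{\varphi_2}$ cannot be disjointly hypercyclic (compare \cite[Theorem 2.1]{BeMa12}).
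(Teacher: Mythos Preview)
Your strategy is exactly the paper's: move to $\CC_0$, invoke the Valiron intertwining map $\sigma_i$ with $\sigma_i\circ\psi_i=\lambda^{-1}\sigma_i$, estimate simultaneously $|(\psi_i^n)'(w_0)|$ from below and $|\psi_1^n(w_0)-\psi_2^n(w_0)|$ from above, and conclude via Koebe that the image discs overlap for all $n$. The concluding contradiction with \cite{BeMa12} is also the paper's.

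There is one genuine overreach. You assert that $\sigma_i(w)=w+B_i(w)$ with $B_i$ \emph{bounded} on a half-plane $\{\Re e(w)>R_0\}$. The regularity hypothesis in the paper's definition only gives $\psi_i(w)=\lambda^{-1}w+\Gamma_i(w)$ with $|\Gamma_i(w)|\lesssim |w|^{1-\veps}$; iterating, one obtains $\sigma_i(w)-w=O(|w|^{1-\veps})$, not $O(1)$, and there is no reason to expect a bounded remainder under a mere $C^{1+\veps}$ expansion at $\alpha$. The paper does not claim boundedness: it cites \cite{BrPo03} to get that $\sigma_i$ has finite angular derivative at infinity (enough for the derivative bound along the orbit, which stays in a fixed Stolz angle), and extracts from \cite{BoSh97} the orbit expansion $\psi_i^n(w)=\lambda^{-n}w+\lambda^{-n}B_{i,n}(w)$ with $|B_{i,n}(w)|\leq C|w|^{1-\veps}$, yielding
\[
|\psi_1^n(x)-\psi_2^n(x)|\leq 2C\lambda^{-n}|x|^{1-\veps}.
\]
Your argument survives this correction: the Koebe radius is $\gtrsim\lambda^{-n}x$ while the orbit gap is $\lesssim\lambda^{-n}x^{1-\veps}$, so the overlap persists for $x$ large because $x^{1-\veps}=o(x)$---but the constant ``$x>32M$'' must be replaced by ``$x$ large enough''. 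Also, your Cauchy argument for $\sigma_i'(w)\to 1$ only works once you use that the orbit stays in a Stolz angle (so that the Cauchy radius is comparable to $|w|$ rather than merely to $\Re e(w)$); the paper sidesteps this by invoking the angular-derivative result directly.
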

\begin{proof}
 We move on the half-plane and write $\psi_i(w)=\lambda w+\Gamma_i(w)$ with $|\Gamma_i(w)|\leq M|w|^{1-\veps}$. Let $\sigma_i$
 be the map coming from the linear fractional model, namely $\sigma_i$ is a self-map of $\CC_0$ such that 
 $$\sigma_i\circ\psi_i=\lambda \sigma_i.$$
 The regularity assumption on $\psi_i$ ensures that $\sigma_i$ has finite angular derivative at infinity
 (see \cite[Theorem 2.7]{BrPo03}). Let $x>0$ be large. Since each $\psi_i$ is univalent, we know by Koebe 1/4-theorem that 
 $$\psi_i^n(D(x,x/2))\supset D(\psi_i^n(x), |(\psi_i^n)'(x)|x/8).$$
 Now, from $\sigma_i\circ\psi_i^n =\lambda^n \sigma_i$, we deduce that
 $$(\psi_i^n)'(x)\sigma_i'(\psi_i^n(x))=\lambda^n \sigma_i'(x).$$
 Since the sequence $(\psi_i^n(x))_n$ belongs to some fixed Stolz angle (independent of $x$ and $n$), and since $\sigma_i$ has finite angular derivative at infinity,
 we get 
 $$|(\psi_i^n)'(x)|\geq C_1 \lambda^n$$
 for some $C_1>0$ independent of $x$. Moreover, a look at the proof of Theorem 4.9 of \cite{BoSh97} (see p.60) shows that,
 for all $w\in\CC_0$, $\psi_i^n(w)=\lambda^n w+\lambda^n B_{i,n}(w)$ where $|B_{i,n}(w)|\leq C_2 |w|^{1-\veps}$
 for some $C_2>0$. Therefore, 
 $$|\psi_1^n(x)-\psi_2^n(x)|\leq 2C_2 \lambda^n |x|^{1-\veps}.$$
 This shows that, provided $x$ is large enough, $\psi_1^n(D(x,x/2))$ and $\psi_2^n(D(x,x/2))$ cannot be disjoint. This prevents $C_{\psi_1}$ and $C_{\psi_2}$
 to be disjointly hypercyclic.
\end{proof}

\begin{remark}
Instead of using \cite{BrPo03} and Koebe's theorem, we could apply Lemma \ref{lem:koebehyperbolic} below.
\end{remark}

We now show that we cannot dispense with some regularity condition on the symbols to ensure that we do not have a disjoint (frequent) hypercyclic vector.
We work on the half-plane and we work on an example coming from \cite{Val31}.
\begin{example}
 Let $\psi_1(w)=2w$ and $\psi_2(w)=2w\left(1+\frac1{\log(w+3)}\right)$. Then $C_{\psi_1}$ and $C_{\psi_2}$ are disjointly frequently hypercyclic on $H(\CC_0)$.
\end{example}
\begin{proof}
 We first observe that $\psi_1$ and $\psi_2$ are univalent self-maps of $\CC_0$ and that the corresponding self-maps of the disc $\varphi_1$ and $\varphi_2$ are hyperbolic with $1$
 as Denjoy-Wolff point and $\varphi_1'(1)=\varphi_2'(1)=1/2$. We also know that $\psi_1^n(1)=2^n$ and $\psi_2^n(1)=q_n2^n$
 with $(q_n)$ increasing and $q_n\to+\infty$ (see \cite{Val31}). Let $(R_k)$ be a sequence tending to $+\infty$ and $K_k=\overline{D_\rho}(1,R_k)$
 (closed disc for the hyperbolic distance). Then 
 $$\psi_1^n(K_k)\subset \overline{D_\rho}(\psi_1^n(1),R_k)=2^n \overline{D_\rho}(1,R_k)\subset \{w:\ 2^n\delta_k\leq \Re e(w)\leq 2^n M_k \}$$
 $$\psi_2^n(K_k)\subset  \overline{D_\rho}(\psi_2^n(1),R_k)=\psi_2^n(1) \overline{D_\rho}(1,R_k)\subset \{w:\ q_n 2^n \delta_k\leq \Re e(w)\leq q_n 2^n M_k \}$$
where $\delta_k$ and $M_k$ only depend on $k$  (see \cite[p.68, (17)]{Sha93}).
 Let $\phi_1(n)=n$ and $\phi_2(n)=n+\lfloor \frac{\log q_n}{\log 2}\rfloor$. Let also, for all $k\geq 1$, $N_k$ be an integer such that 
 $$N_k\geq \frac{\log M_k}{\log 2}-\frac{\log\delta_k}{\log 2}+2.$$
 We apply Corollary \ref{cor:sets} to $\phi_1,\phi_2$ and $(N_k)$ and we observe that \eqref{eq:corsets1} implies that the sets $\psi_i^n(K_k)$, $i\in\{1,2\}$, 
 $k\geq 1$, $n\in A_k$, are pairwise disjoint. 

\end{proof}

\subsection{Hyperbolic and parabolic maps}

We turn to the last case: when one map is hyperbolic and the other one is parabolic, with the same Denjoy-Wolff point. 
We are reduced to prove the following result.

\begin{theorem}\label{thm:coholparhyp}
Let $\varphi_1,\varphi_2$ be regular univalent self-maps of $\DD$ with $+1$ as Denjoy-Wolff point. Assume that $\varphi_1$ is hyperbolic and that $\varphi_2$ is parabolic. Then $C_{\varphi_1}$ and $C_{\varphi_2}$ are disjointly frequently hypercyclic if and only if $\varphi_2''(1)\in i\mathbb R$.
\end{theorem}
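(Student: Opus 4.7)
The plan is to prove the two implications separately, with the converse being the substantive step.

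For the sufficient direction, when $\varphi_2''(1)\in i\mathbb R$ the regularity of $\varphi_2$ forces $\Re e\,\varphi_2''(1)=0$, so by the dichotomy recalled before the theorem, $(\varphi_2^n(z))$ converges tangentially to $1$; meanwhile the hyperbolicity of $\varphi_1$ gives nontangential convergence of $(\varphi_1^n(z))$. I would then apply Corollary \ref{cor:tangentialandnontangential} to conclude disjoint frequent hypercyclicity.

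For the necessary direction I would suppose $\Re e\,\varphi_2''(1)>0$ and, for contradiction, assume that some $f\in H(\DD)$ is a disjointly frequently hypercyclic vector for $(C_{\varphi_1},C_{\varphi_2})$. Working on $\CC_0$ via $\mathcal C$ (with $F=f\circ\mathcal C^{-1}$, $\mu=1/\varphi_1'(1)>1$, and $a=\varphi_2''(1)$), Lemma \ref{lem:orbitsparabolic} together with the hyperbolic orbit expansion used in the proof of Theorem \ref{thm:coholhypsamesame} yields
\[\psi_1^n(w)=\mu^n w+\mu^n B_{1,n}(w),\qquad |B_{1,n}(w)|\lesssim|w|^{1-\veps},\]
\[\psi_2^m(w)=am+b\log m+B_{2,m}(w),\qquad B_{2,m}\text{ uniformly bounded on compacts},\]
and, via the Koenigs eigenfunctions $\sigma_i$, the uniform lower bounds $|(\psi_1^n)'(w)|\gtrsim\mu^n$ and $|(\psi_2^m)'(w)|\gtrsim 1$ whenever $|w|$ is large.

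Next I would choose an \emph{aligned} compact test set $K=\overline{D(w_0,r)}$ with $w_0=C_0\,a/|a|$ and $r=C_0\cos(\arg a)/2\subset\CC_0$ (so $K\subset\CC_0$ since $\Re e\,a>0$), pick constant targets $g_1\equiv 1$ and $g_2\equiv 0$, and fix $\veps<1/2$. The set $A:=\{n:\|F\circ\psi_1^n-1\|_K<\veps\text{ and }\|F\circ\psi_2^n\|_K<\veps\}$ has positive lower density $\delta$ by hypothesis. Koebe's $1/4$ theorem, applied to the univalent maps $\psi_1^n,\psi_2^m$ on $D(w_0,r)$, produces Euclidean discs $D_1(n):=D(\psi_1^n(w_0),c_1\mu^n C_0)\subset\psi_1^n(K)$ (on which $|F-1|<\veps$) and $D_2(m):=D(\psi_2^m(w_0),c_2 C_0)\subset\psi_2^m(K)$ (on which $|F|<\veps$) with uniform $c_1,c_2>0$. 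Since both $\psi_1^n(w_0)\approx\mu^n C_0\,a/|a|$ and $\psi_2^m(w_0)\approx am$ lie on the ray of direction $a$, choosing $C_0$ so large that $\mu^n M_1 C_0^{1-\veps}$ is absorbed into $\frac{1}{2}c_1\mu^n C_0$, the overlap condition $D_1(n)\cap D_2(m)\neq\emptyset$ reduces to $m$ lying in a window $I(n)=[T_1(n),T_2(n)]$ of length of order $\mu^n$ centered near $\mu^n C_0/|a|$. Any $z_0\in D_1(n)\cap D_2(m)$ then satisfies both $|F(z_0)-1|<\veps$ and $|F(z_0)|<\veps$, giving $1<2\veps<1$, the desired contradiction.

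The main obstacle is verifying that $A\cap I(n)\neq\emptyset$ for some $n\in A$: the ratio $T_2(n)/T_1(n)$ is fixed once $K$ is chosen, and the crude density estimate $|A\cap[T_1,T_2]|\geq(\delta-o(1))T_2-T_1$ requires $T_2/T_1>1/\delta$. Since $\delta=\delta(K)$ itself depends on $K$, the parameters must be balanced carefully: I expect one must enlarge $K$ (for instance, replacing the disc by a rectangle aligned with the direction $a$, of aspect ratio tending to infinity, or pushing $r$ toward $\Re e\,w_0$) to force $T_2/T_1$ above $1/\delta$ while still preserving positive lower density of $A$. This coordination between the geometry of $K$, the Koebe distortion constants, and the density $\delta(K)$ is where the real work of the proof lies.
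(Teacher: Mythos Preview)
Your sufficient direction is exactly what the paper does.

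For the necessary direction you have correctly located the difficulty, but the circular dependence you describe is real and your suggested fixes do not break it. If the compact $K$ is chosen first, then the density $\delta=\delta(K)$ of your set $A$ may shrink as you stretch $K$ into a long rectangle, and there is no a priori reason the ratio $T_2(n)/T_1(n)$ (which is controlled by the geometry of $K$) can be pushed above $1/\delta(K)$.

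The paper dissolves this circularity by two moves. First, it decouples the two operators: instead of a single set $A$ where \emph{both} $F\circ\psi_1^n$ and $F\circ\psi_2^n$ are close to their targets, it takes
\[A=\{n:\ |f\circ\psi_2^n(1)|<1/2\},\]
using only a point evaluation and only the parabolic map. The density of $A$ is now a fixed positive number, independent of any later geometric choice. For $\psi_1$ one merely uses hypercyclicity, producing arbitrarily large $p$ with $|f\circ\psi_1^p-1|<1/2$ on a test region. Second, the test region is a rectangle $\mathcal R(w_0,\delta,4)$ aligned with the direction $e^{i\theta}=a/|a|$, and a refined Koebe-type lemma (Lemma~\ref{lem:koebehyperbolic}) shows $\psi_1^p(\mathcal R(w_0,\delta,4))\supset\mathcal R(\lambda^p w_0,4\delta,1)$. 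Because $\psi_2^n(1)\approx an$ moves linearly along this direction, every $n$ in the interval $[8\delta\lambda^p x/\rho,\ \lambda^p x/\rho]$ has $\psi_2^n(1)\in\mathcal R(\lambda^p w_0,4\delta,1)$, forcing $A$ to miss this interval and hence $\ldens(A)\leq 8\delta$. Since $\delta$ was a free parameter chosen \emph{after} $A$, letting $\delta\to 0$ gives the contradiction.

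So the missing idea is: define the density set via a \emph{point} evaluation for the parabolic map alone (fixing $\ldens(A)$ once and for all), and introduce the small parameter $\delta$ only afterwards, in the geometry of the hyperbolic image, via rectangles rather than discs.
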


Half of the proof has already been done: if $\varphi_2''(1)\in i\mathbb R$, then $(\varphi_2^n(0))$ converges tangentially to $\alpha$ whereas $(\varphi_1^n(0))$ converges
nontangentially to $\alpha$. Hence Corollary \ref{cor:tangentialandnontangential} does the job. The converse implication uses some ideas of the proofs of Theorem \ref{thm:coholparsamesame} and of Theorem \ref{thm:coholhypsamesame}. Nevertheless, using the work done throughout this section, it is not difficult to see that, given any compact subset $K$ of $\DD$,  $\varphi_1^n(K)$ and $\varphi_2^n(K)$ are pairwise disjoint provided $n$ is large enough. Therefore, by \cite[Theorem 2.1]{BeMa12}, $C_{\varphi_1}$ and $C_{\varphi_2}$ are disjointly hypercyclic. To contradict the disjoint {\it frequent} hypercyclicity, we will need a refinement of Koebe 1/4-theorem specific to our context. For $w\in\CC_0$, $w=xe^{i\theta}$ with $x>0$ and $\theta\in(-\pi/2,\pi/2)$, for $\delta\in(0,1)$ and $c>0$, we denote by $\mathcal R(w,\delta,c)$ the rectangle whose apexes are $w\pm(1-\delta)x\cos\theta\pm icx$. 

\begin{lemma}\label{lem:koebehyperbolic}
Let $\theta\in (-\pi/2,\pi/2)$, let $\varphi$ be a hyperbolic univalent regular self-map of $\DD$ with $+1$ as Denjoy-Wolff point and let $\lambda=1/\varphi'(1)$. For all $\delta\in(0,1/4)$, there exists  $x_0\in(0,+\infty)$ such that, for all $x\geq x_0$, for all $n\geq 1$,
setting $w_0=xe^{i\theta}$,
$$\psi^n\big(\mathcal R(w_0,\delta,4)\big)\supset \mathcal R(\lambda^n w_0,4\delta,1).$$
\end{lemma}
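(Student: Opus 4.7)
The plan is to use Rouché's theorem, exploiting the fact that on large scales the map $\psi^n$ is well approximated by the linear dilation $w\mapsto \lambda^n w$. The key asymptotic, already used in the proof of Theorem~\ref{thm:coholhypsamesame}, is that the regularity of $\varphi$ at $1$, combined with the linear fractional model of \cite{BoSh97}, yields constants $C>0$ and $\veps>0$ such that for every $w\in\CC_0$ and every $n\geq 1$,
$$\psi^n(w)=\lambda^n w+\lambda^n B_n(w),\qquad |B_n(w)|\leq C|w|^{1-\veps}.$$
So $\psi^n$ differs from $\lambda^n\cdot$ by an additive error of order $\lambda^n|w|^{1-\veps}$, which is of lower order in $|w|$ than the main term.

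Fix $v\in \mathcal R(\lambda^n w_0,4\delta,1)$. Notice that $\mathcal R(w_0,\delta,4)$ has real part bounded below by $\delta x\cos\theta>0$, hence lies in $\CC_0$, and that the scaling $w\mapsto\lambda^n w$ maps it bijectively onto $\mathcal R(\lambda^n w_0,\delta,4)$. I would apply Rouché's theorem on $\mathcal R(w_0,\delta,4)$ to the holomorphic functions
$$f(w)=\psi^n(w)-v,\qquad g(w)=\lambda^n w-v.$$
Since $v$ sits in the smaller subrectangle $\mathcal R(\lambda^n w_0,4\delta,1)$, which is separated from the boundary of $\mathcal R(\lambda^n w_0,\delta,4)$ by a horizontal margin of $3\delta\lambda^n x\cos\theta$ and a vertical margin of $3\lambda^n x$, one has for $w\in\partial\mathcal R(w_0,\delta,4)$,
$$|g(w)|\geq \min\bigl(3\delta\lambda^n x\cos\theta,\,3\lambda^n x\bigr)=3\delta\lambda^n x\cos\theta.$$
On the other hand $|w|\leq 6x$ on that boundary (from $|w_0|=x$ and the rectangle's half-dimensions $\leq 4x$), so
$$|f(w)-g(w)|=\lambda^n|B_n(w)|\leq C\cdot 6^{1-\veps}x^{1-\veps}\lambda^n.$$
Choosing $x_0$ large enough that $C\cdot 6^{1-\veps}x^{-\veps}<3\delta\cos\theta$ for every $x\geq x_0$---a choice depending only on $\delta$, $\theta$ and $\varphi$, and crucially uniform in $n$---then forces $|f-g|<|g|$ on $\partial\mathcal R(w_0,\delta,4)$.

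Rouché's theorem yields that $f$ and $g$ have the same number of zeros inside $\mathcal R(w_0,\delta,4)$. But $g$ has the single zero $w=v/\lambda^n$, which lies in $\mathcal R(w_0,\delta,4)$ because $v\in \lambda^n\mathcal R(w_0,\delta,4)=\mathcal R(\lambda^n w_0,\delta,4)$. Hence $f$ vanishes at some $w\in \mathcal R(w_0,\delta,4)$, that is, $v\in \psi^n(\mathcal R(w_0,\delta,4))$, proving the desired inclusion. The delicate point is really the uniformity in $n$: the whole scheme works precisely because the prefactor $\lambda^n$ cancels from both sides of the Rouché inequality, so the comparison reduces to an inequality in $x$ alone, controllable by taking $x_0$ large in terms of $\delta$, $\theta$, $C$ and $\veps$.
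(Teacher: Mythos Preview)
Your proof is correct and rests on the same key estimate as the paper---the expansion $\psi^n(w)=\lambda^n w+\lambda^n B_n(w)$ with $|B_n(w)|\leq C|w|^{1-\veps}$---used to compare $\psi^n$ with the dilation $w\mapsto\lambda^n w$ on $\partial\mathcal R(w_0,\delta,4)$. The only difference is packaging: the paper tracks $\Re e(\psi^n(w)-\psi^n(w_0))$ and $\Im m(\psi^n(w)-\psi^n(w_0))$ separately on the four sides and then invokes an implicit winding/degree argument, whereas your Rouché formulation makes that topological step explicit and is arguably cleaner.
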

\begin{proof}
Let $w\in\partial R(w_0,\delta,4)$. We are going to prove that either
$$|\Re e(\psi^n(w)-\psi^n(w_0))|\geq (1-2\delta)\lambda^n x\cos\theta\textrm{ or }|\Im m(\psi^n(w)-\psi^n(w_0))|\geq 2\lambda^n x$$
provided $x$ is large enough. Indeed, recall that, for all $w\in\CC_0$, $\psi^n(w)=\lambda^n w+\lambda^n B_n(w)$ with $|B_n(w)|\leq C|w|^{1-\veps}$ for some $C>0$ independent of $w$ and $n$. Assume first that $w=w_0\pm(1-\delta)x\cos\theta+icx$ with $c\in[-4,4]$. Then we get
\begin{align*}
|\Re e(\psi^n(w)-\psi^n(w_0))|&\geq (1-\delta)\lambda^n x\cos\theta-C_1\lambda^n x^{1-\veps}\\
&\geq (1-2\delta)\lambda^n x\cos \theta
\end{align*}
provided $x$ is large enough. On the other hand, if $w=w_0+dx\cos\theta\pm 4ix$, with
$d\in [-(1-\delta),(1-\delta)]$, then
\begin{align*}
|\Im m(\psi^n(w)-\psi^n(w_0))|&\geq 4\lambda^n x-C_2\lambda^n x^{1-\veps}\\
&\geq 2\lambda^n x
\end{align*}
if $x$ is large enough. It then follows that $\psi^n(\mathcal R(w_0,\delta,4))$ contains the rectangle with apexes $\psi^n(w_0)\pm\lambda^n (1-2\delta)x\cos\theta\pm 2\lambda^n x$. Now, $|\psi^n(w_0)-\lambda^n w_0|\leq C\lambda^n x^{1-\veps}$ so that, again provided $x$ is large enough, this rectangle contains $\mathcal R(\lambda^n w_0,4\delta,1)$.
\end{proof}

\begin{proof}[Proof of Theorem \ref{thm:coholparhyp}]
We assume $\varphi_2''(1)\notin i\mathbb R$ and we write it $\varphi_2''(1)=\rho e^{i\theta}$. We argue by contradiction and prove
a slightly stronger result: any $f\in H(\mathbb C_0)$ which is frequently hypercyclic for $C_{\psi_2}$ cannot be hypercyclic
for $C_{\psi_1}$. Hence, let $f\in H(\mathbb C_0)$ be a frequently hypercyclic vector for $C_{\psi_2}$, let 
$$A=\{n\in\NN:\ |f\circ\psi_2^n (1)|<1/2\}$$
which has positive lower density and let $\delta\in(0,1/8)$. Let  $w_0=xe^{i\theta}$ be given by Lemma \ref{lem:koebehyperbolic} for $\varphi=\varphi_1$. For $p\geq 1$, we define
$$F_p=\left\{n\leq\frac{\lambda^p x}{\rho}:\ \psi_2^n(1)\in \mathcal R(\lambda^p w_0,4\delta,1)\right\}.$$
Let us show that $I_p:=\left[\frac{8\delta\lambda^px}{\rho},\frac{\lambda^px}{\rho}\right]\cap\NN$ is contained in $F_p$ provided $p$ is large enough. Indeed, let $n\in I_p$ and recall that $\psi_2^n(1)=\rho e^{i\theta}n+b\log n+B_n(1)$ with $|B_n(1)|\leq C$. Hence, 
$$-|b|\log n-C\leq \lambda^p \Re e(w_0)-\Re e(\psi_2^n(1))\leq  (1-8\delta)x\cos\theta\lambda^p+|b|\log n+C$$
which yields
$$\big| \Re e(w_0)-\Re e(\psi_2^n(1))\big|\leq (1-4\delta)x\cos\theta\lambda^p$$
if $p$ is sufficiently large.
Similarly, assuming $\sin(\theta)\geq 0$, we also have
$$-|b|\log n-C\leq \lambda^p \Im m(w_0)-\Im m(\psi_2^n(1))\leq  (1-8\delta)x\sin\theta\lambda^p+|b|\log n+C$$
so that 
$$\big|  \lambda^p \Im m(w_0)-\Im m(\psi_2^n(1)) \big| \leq \lambda^p x.$$
Now, if $f$ is a hypercyclic vector for $C_{\psi_1}$, there exists $p$ as large as we want such that $|f\circ\psi_1^p-1|<1/2$ on $\mathcal R(w_0,\delta,4)$ so that $|f(w)|>1/2$ on $\mathcal R(\lambda^p w_0,4\delta,1)$.  Hence  $A\cap F_p=\varnothing$ which shows
that $\ldens(A)\leq 8\delta$. Since $\delta>0$ is arbitrary, this is a contradiction. 
\end{proof}

\providecommand{\bysame}{\leavevmode\hbox to3em{\hrulefill}\thinspace}
\providecommand{\MR}{\relax\ifhmode\unskip\space\fi MR }
\providecommand{\MRhref}[2]{%
  \href{http://www.ams.org/mathscinet-getitem?mr=#1}{#2}
}
\providecommand{\href}[2]{#2}

\end{document}